\theoremstyle{plain}
\newtheorem{remark}{Remark}
\newtheorem{corollary}{Corollary}
\newtheorem*{remarks*}{Remarks}
\newtheorem{theorem}{Theorem }
\newtheorem{proposition}{Proposition}
\newtheorem{Corollary}{Corollary}
\newtheorem{assum}{Assumption}
\newlength\figureheight
\newlength\figurewidth
\pgfplotsset{compat=newest}
\pgfplotsset{plot coordinates/math parser=false}
\newtheoremstyle{specialcasestyle}{1mm}{1mm}{\upshape}{}{\bfseries\upshape}{.}{0mm}{}
\theoremstyle{specialcasestyle}
\begin{document}

\title{Hierarchical Importance Sampling for Estimating Occupation Time for SDE Solutions}

\author[1]{Eya Ben Amar}
\author[2]{Nadhir Ben Rached}
\author[1,3]{Ra\'ul Tempone}

\affil[1]{Computer, Electrical and Mathematical Sciences \& Engineering Division (CEMSE), King Abdullah University of Science and Technology (KAUST), Thuwal, Saudi Arabia.}
\affil[2]{School of Mathematics, University of Leeds, Leeds, UK.}
\affil[3]{Alexander von Humboldt Professor in Mathematics for Uncertainty Quantification, RWTH Aachen University, Aachen, Germany.}

\date{}
\maketitle
\thispagestyle{empty}
\begin{abstract}
This study considers the estimation of the complementary cumulative distribution function of the occupation time (i.e., the time spent below a threshold) for a process governed by a stochastic differential equation. The focus is on the right tail, where the underlying event becomes rare, and using variance reduction techniques is essential to obtain computationally efficient estimates. Building on recent developments that relate importance sampling (IS) to stochastic optimal control, this work develops an optimal single-level IS (SLIS) estimator based on the solution of an auxiliary Hamilton–Jacobi–Bellman (HJB) partial differential equation (PDE). The cost of solving the HJB-PDE is incorporated into the total computational work, and an optimized trade-off between preprocessing and sampling is proposed to minimize the overall cost. The SLIS approach is extended to the multilevel setting to enhance efficiency, yielding a multilevel IS (MLIS) estimator. A key feature that emerges in the MLIS framework is that the single-level variance decreases to zero as the discretization level increases, a behavior that stems from the zero-variance property of the optimal control. This observation leads to the formulation of a necessary and sufficient condition under which MLIS outperforms SLIS. To ensure that this condition is satisfied, we introduce a smoothing of both the drift and the observable to enhance the variance decay rate, together with a novel common-likelihood MLIS formulation that preserves this decay under IS. 
The classical multilevel Monte Carlo complexity theory can be extended to accommodate settings in which the variance on coarse levels depends on the target accuracy, a phenomenon for which the variance-decay behavior in the IS setting is a key example. Notably, the total work complexity of MLIS can be better than than quadratic. Numerical experiments in the context of fade duration estimation demonstrate the benefits of the proposed approach and validate the theoretical results, including the analysis of the smoothing error, which is shown to be negligible. Consequently, the smoothed MLIS estimator remains unbiased for the occupation time while achieving significantly better performance than SLIS.

\textbf{Keywords:} Rare-event estimation, occupation time, stochastic differential equation, importance sampling, multilevel Monte Carlo, Hamilton–Jacobi–Bellman, variance decay, smoothing error.
\end{abstract}

\section{Introduction}

This study considers a system in which a process evolves according to a stochastic differential equation (SDE), with the goal of characterizing the distribution of the occupation time, specifically, the time a process spends below a critical threshold. Occupation time has been analytically studied in the context of Markov chains using matrix-based techniques~\cite{sericola2000occupation}. In contrast, the present work addresses a more general setting in which the underlying dynamics are governed by SDEs with continuous state spaces, requiring advanced stochastic-simulation techniques for accurate estimation.

Occupation time arises naturally in numerous applications. For example, in wireless communications, occupation time corresponds to the fade duration (i.e., the total time a signal remains below a reliability threshold)~\cite{amar2025stochastic}. In financial mathematics, similar occupation-time functionals appear in the pricing of exotic derivatives, such as Parisian and corridor options, where payoffs depend on the cumulative time an asset price remains above or below a barrier~\cite{chesney1997brownian,choi2021occupation}. In biophysics, occupation times, also known as residence times, describe how long particles stay in potential wells~\cite{barkai2006residence}. 

A quantity of particular interest is the complementary cumulative distribution function (CCDF) of the occupation time, which measures the probability that the occupation time exceeds a prescribed duration. This study simulates sample paths of the underlying process by numerically solving the SDE and estimates this probability using Monte Carlo (MC) methods to approximate this distribution. In the right tail of the CCDF, where the prescribed duration approaches the final time, the event corresponds to the process remaining below the threshold for an exceptionally extended period. Such events are extremely rare, making their estimation a type of rare-event probability problem. In this regime, the crude MC method becomes ineffective, and variance reduction techniques are necessary to obtain accurate estimates.

Among such techniques, importance sampling (IS)~\cite{kroese2013handbook,biondini2015introduction} is widely used for rare-event estimation in SDEs. 
Several studies have established a connection between IS and stochastic optimal control (SOC)~\cite{hartmann2018importance,ben2023state,zhang2014applications,ben2024double,amar2025stochastic,hammouda2024automated,rached2026importance}, where the optimal change of measure is characterized by the solution of an auxiliary Hamilton–Jacobi–Bellman (HJB) partial differential equation (PDE). For example,~\cite{ben2024double} develops an optimal change of measure via SOC
to estimate rare-event probabilities for McKean--Vlasov SDEs, 
while~\cite{amar2025stochastic} applies a similar approach to estimate the CCDF of the fade duration 
in wireless communications. 
Although these studies have demonstrated substantial variance reduction and have analyzed the sampling cost of IS estimators, they do not account for the computational cost of obtaining the optimal change of measure because they solve the required auxiliary HJB-PDE off-line.

In the present study, the trade-off between the accuracy of the control \eqref{control} and the resulting variance reduction is investigated, and the cost of solving the auxiliary HJB-PDE~\eqref{PDE} is incorporated into a comprehensive analysis of the total computational work of the IS estimator. The analysis reveals that the variance of the IS estimator decreases with the target accuracy, resulting in a lower overall computational cost for IS compared to the crude MC method. 

In order to further improve the efficiency of the single-level IS (SLIS) estimator for rare-event probability estimation, 
we extend our previous work~\cite{amar2025stochastic} to the multilevel setting by combining IS with the 
multilevel MC (MLMC) method introduced by~\cite{giles2015multilevel}.
Several studies have investigated combining IS with the MLMC method. For example, one study \cite{ben2020importance} applied IS to address the high‑kurtosis phenomenon in the MLMC method for stochastic reaction networks, and another study \cite{ben2023adaptive,kebaier2018coupling} proposed variance‑reduction schemes based on suboptimal, levelwise constant controls. The most recent development \cite{ben2025multilevel} combines SOC-based IS with the MLMC method in a single‑likelihood multilevel IS (MLIS) framework, achieving substantial computational savings. In the context of Bayesian inverse problems, various methods, such as those in \cite{dodwell2015hierarchical, lykkegaard2023multilevel}, exploit coarse-level likelihood evaluations to inform fine-level sampling, demonstrating that reusing the likelihood structure across model hierarchies can be highly effective. Building on these insights and following the method in \cite{ben2025multilevel}, this work introduces a novel common-likelihood MLIS approach. This formulation preserves the variance-convergence rate of MLMC and reduces the total computational work required by MLIS.

A crucial feature that emerges when applying the proposed MLIS approach is that the variance at the coarse level decreases to zero as the desired accuracy is refined. This behavior contrasts with the standard MLMC setting, where the single‑level variance is assumed to be bounded across all discretization levels of the SDE. Similar variance‑decay phenomena have been observed in other settings, such as particle approximations of McKean--Vlasov SDEs, where the estimator variance decreases with the system size due to the propagation of chaos~\cite{bossy1997stochastic}. This observation motivated the work in this study to extend the classical complexity theorem of MLMC~\cite{giles2015multilevel} to a more general setting. This work analyzes several scenarios and demonstrates that, in certain cases, MLIS can achieve better computational rates than the standard MLMC method due to the decay of the single-level variance. 

A further contribution of this work is the formulation of a necessary and sufficient condition under which MLIS outperforms SLIS. To ensure that this condition is satisfied in the occupation time case, we introduce a smoothing of both the drift \eqref{drift} and the observable \eqref{observable}, which enhances the decay rate of the multilevel variance. Moreover, we show that the proposed common-likelihood MLIS formulation preserves this improved decay under IS. Since smoothing modifies the original problem, we also analyze the induced smoothing error and prove that it is negligible, ensuring that the resulting smoothed MLIS estimator remains unbiased for the occupation time.

The contributions of this paper are summarized as follows.
\begin{itemize}
    \item An optimal SLIS estimator is developed to estimate the right tail of the CCDF of the occupation time, exploiting the connection between IS and SOC.
    \item The preprocessing cost is incorporated into the total computational work of the IS estimator, allowing a quantitative analysis of the trade-off between the control accuracy and variance reduction for a minimal computational cost.
    \item The SLIS framework is extended to the MLIS approach by combining the proposed SLIS with the MLMC method to improve computational efficiency.
    \item A novel common-likelihood MLIS approach is introduced, including an analysis of its influence on the variance and its benefit in reducing the total work.
    \item A necessary and sufficient condition for MLIS to outperform SLIS is derived.
    \item An optimization strategy is proposed for the total work of MLIS that accounts for the sampling and preprocessing costs of solving the auxiliary HJB.
    \item The classical MLMC complexity theorem is extended to a more general setting, where the single-level variance decays as the accuracy requirement is refined, demonstrating that MLIS can achieve complexity rates of better than 2 in such cases.
    \item In the setting of occupation-time estimation, a smoothing procedure for both the drift and the observable is introduced to improve the variance decay and ensure that the necessary condition for MLIS to outperform SLIS is satisfied.
    \item The resulting smoothing error is analyzed and shown to be negligible, guaranteeing that the proposed smoothed MLIS estimator remains unbiased while achieving a lower computational cost than SLIS. 
\end{itemize}

The remainder of this paper is structured as follows. Section~\ref{Section 1} describes the problem setting. Next, Section~\ref{Section 2} details the SLIS method, presenting the construction of the optimal estimator, exploring the influence of the accuracy of the auxiliary HJB-PDE on the variance, and reporting the numerical results. Then, Section~\ref{Section 3} reviews the MLMC method and provides the sufficient and necessary conditions for its efficiency. Section~\ref{Section 4} introduces the MLIS approach, including the single-level likelihood formulation, the smoothing strategy, and the common-likelihood construction. Moreover, Section~\ref{Section 5} presents the analysis of the effect of the HJB-PDE accuracy on the efficiency of the MLIS method and the proposed optimal work formulation, providing a numerical example and investigating the work rate when the single-level variance decays. Finally, Section~\ref{error analysis} analyzes the impact of smoothing on the estimator, derives a quadratic characterization of the smoothing error, and establishes parameter-selection strategies ensuring that smoothing introduces only a negligible bias in the occupation-time setting.
\section{Problem Setting}
\label{Section 1}
First, let \(\boldsymbol{X}(s) \in \mathbb{R}^d\) be the solution to the following SDE:
\begin{equation}
\label{SDEgd}
\begin{cases}
d \boldsymbol{X}(s) = a(s, \boldsymbol{X}(s)) \, ds + b(s, \boldsymbol{X}(s)) \, dW(s), & 0 < s < T, \\
\boldsymbol{X}(0) = x_0,
\end{cases}
\end{equation}
where \(a: [0, T] \times \mathbb{R}^d \to \mathbb{R}^d\) denotes the drift coefficient, \(b: [0, T] \times \mathbb{R}^d \to \mathbb{R}^{d \times d}\) represents the diffusion coefficient, and \(W\) indicates the \(d\)-dimensional standard Brownian motion. Then, let \(h: \mathbb{R}^d \to \mathbb{R}\) be a given function. The occupation time \(Z(T)\) over the time interval \([0, T]\) is defined as the total amount of time during which \(h(\boldsymbol{X}(s))\) remains below a given threshold \(\gamma_{\text{th}}\). The quantity \(Z(T)\) is obtained as the solution of the following ordinary differential equation:
\begin{equation}
\label{ZODE}
\begin{cases}
dZ(s) = f(h(\boldsymbol{X}(s))) \, ds, & 0 < s < T, \\
Z(0) = 0,
\end{cases}
\end{equation}
where 
\begin{equation}
\label{drift}
f(x) := \mathbbm{1}_{\{x < \gamma_{\text{th}}\}}.
\end{equation}
The aim is to characterize the CCDF of the occupation time \(Z(T)\), that is,
\begin{equation}
\label{equ}
q_w=\mathbb{P}(Z(T) > w \mid \boldsymbol{X}(0) = {x}_0, Z(0) = 0) 
= \mathbb{E}\left[ g_w(Z(T)) \mid \boldsymbol{X}(0) ={x}_0, Z(0) = 0 \right],
\end{equation}
where 
\begin{equation}
\label{observable}
g_w(x) = \mathbbm{1}_{\{x > w\}} \qquad \text{for a given threshold \(w > 0\).}
\end{equation}
This formulation can be compactly described by the following coupled system:
\begin{equation}
\label{SDEgdplus}
\begin{cases}
d \boldsymbol{X}(s) = a(s, \boldsymbol{X}(s)) \, ds + b(s, \boldsymbol{X}(s)) \, dW(s), & 0 < s < T, \\
d Z(s) =  f(h(\boldsymbol{X}(s))) \, ds, & 0 < s < T, \\
\boldsymbol{X}(0) = x_0, \quad Z(0) = 0.
\end{cases}
\end{equation}

This paper applies the MC estimator to estimate the CCDF in~\eqref{equ} and adopts the Euler--Maruyama scheme to solve the SDE system~\eqref{SDEgdplus}  numerically. Let \(N \in \mathbb{N}\) denote the number of time steps, with step size \(\Delta t = \frac{T}{N}\). The time grid is defined as \(t_n = n \Delta t\), for \(n = 0, 1, \dots, N-1\). Therefore,
\begin{align}
\boldsymbol{X}_{n+1} &= \boldsymbol{X}_n + a(t_n, \boldsymbol{X}_n) \Delta t + b(t_n, \boldsymbol{X}_n) \Delta W_n, \label{EMX} \\
Z_{n+1} &= Z_n + f(h(\boldsymbol{X}_n))\Delta t, \label{EMZ}
\end{align}
where \(\Delta W_n \sim \mathcal{N}(0, \Delta t \, I_d)\) represents the independently and identically distributed Brownian motion increment. Starting from \(\boldsymbol{X}_0 = x_0\) and \(Z_0 = 0\), this scheme produces a discretized trajectory \((\boldsymbol{X}_n, Z_n)\) up to time \(T\). Then, the CCDF is approximated using \(M\) independent realizations of the discretized process as follows:
\begin{equation}
\label{MCestimator}
\mathcal{A}_{\mathrm{MC}} = \frac{1}{M} \sum_{m=1}^M g_w^{(m)},
\end{equation}
where $g_w^{(m)}=g_w\left(Z^{(m)}_N\right)$, and \(Z^{(m)}_N\) denotes the value of the occupation time at the final time step for the \(m\)th trajectory.

However, a significant challenge arises when estimating \(\mathbb{P}(Z(T) > w)\) for large values of \(w\) approaching \(T\). In this regime, the event \(\{Z(T) > w\}\) becomes rare, and standard MC sampling is inefficient. To address this, we employ IS, a variance reduction technique in which paths are simulated under an alternative probability measure that makes the rare event more likely \cite{kroese2013handbook}. 

\section{Single-Level Importance Sampling}
\label{Section 2}
\subsection{Optimal Importance Sampling estimator}
In our previous work~\cite{amar2025stochastic}, the authors developed an optimal IS estimator for~\eqref{equ} in the context of wireless communication systems by changing the measure in the path space of the Brownian motion driving the SDE~\eqref{SDEgdplus}. The change of measure was parametrized by a control function \(\zeta : [0,T] \times \mathbb{R}^d \times [0,T] \to \mathbb{R}^d\). The resulting controlled process satisfies the discretized dynamics:
\begin{align}
\tilde{\boldsymbol{X}}_{n+1} &= \tilde{\boldsymbol{X}}_n 
+ \left( a(t_n, \tilde{\boldsymbol{X}}_n) 
+ b(t_n, \tilde{\boldsymbol{X}}_n) \, \zeta(t_n, \tilde{\boldsymbol{X}}_n,\tilde{Z}_n ) \right) \Delta t 
+ b(t_n, \tilde{\boldsymbol{X}}_n) \,  \Delta W_n, \label{controlledEMX} \\
\tilde{Z}_{n+1} &= \tilde{Z}_n 
+ f(h(\tilde{\boldsymbol{X}}_n))\Delta t,
\qquad n=0,\cdots,N-1. \label{controlledEMZ}
\end{align}
The optimal control \(\zeta^*\) was obtained by minimizing the second moment of the estimator, resulting in a SOC problem. The optimal control is given in closed form as follows: 
\begin{equation}
\label{control}
\zeta^*(t,\boldsymbol{x},z) =  b(t,\boldsymbol{x}) \nabla \log v(t,\boldsymbol{x},z),
\end{equation}
where \(v : [0,T] \times \mathbb{R}^d \times [0,T] \to \mathbb{R}\) solves the Kolmogorov backward equation associated with~\eqref{SDEgdplus} (see \cite{amar2025stochastic}). We refer to this linear PDE as the auxiliary HJB-PDE.

This auxiliary HJB-PDE involves \( d \) spatial variables in addition to time; hence, solving it becomes computationally expensive when the state dimension \(d\) is large. A possible strategy to mitigate this cost is to derive a reduced SDE for the scalar process \(h(\boldsymbol{X}(s))\) and solve the corresponding Kolmogorov backward equation for the lower-dimensional system \((h(\boldsymbol{X}(s)), Z(s))\). 
Markovian projection is a technique that Gyöngy~\cite{gyongy1986mimicking} introduced to reduce the dimensionality of stochastic systems. This method constructs a lower-dimensional SDE, preserving the marginal distribution of a function of the original process. This approach has been used in recent work~\cite{amar2025stochastic,hammouda2024automated,rached2026importance}. In this context, this approach enables defining a one-dimensional process \(\bar{X}(s)\) that mimics the distribution of \(h(\boldsymbol{X}(s))\). The dynamics of \(\bar{X}(s)\) follow a Markovian SDE with drift \(\bar{a}: [0,T] \times \mathbb{R} \to \mathbb{R}\) and diffusion \(\bar{b} : [0,T] \times \mathbb{R} \to \mathbb{R}\), both derived using the Markovian projection lemma~\cite{gyongy1986mimicking}. This reduced formulation yields a surrogate value function \(\bar{v} : [0,T] \times \mathbb{R} \times [0,T] \to \mathbb{R}\) that approximates the original value function \(v\) as follows:

\[
\label{vapprpx}
v(t, \boldsymbol{x}, z) \approx \bar{v}(t, h(\boldsymbol{x}), z).
\]
The function \(\bar{v}(t,x,z)\) solves the following auxiliary HJB:
\begin{equation}
\label{PDE}
\left\{
\begin{aligned}
&\partial_t \bar{v} + \bar{a}(t,x) \, \partial_x \bar{v} + f(x) \, \partial_z \bar{v} + \frac{1}{2} \bar{b}^2(t,x) \, \partial_{x x} \bar{v} = 0, \quad 0 \leq t < T, \; x \in \mathbb{R}, \; z \in [0,T], \\
&\bar{v}(T, x, z) = g_w(z), \quad \text{for all } x \in \mathbb{R}, \; z \in [0,T].
\end{aligned}
\right.
\end{equation}
The IS must be applied to the high-dimensional SDE~\eqref{SDEgdplus}, not to its projected version, because the reduced drift and diffusion coefficients \(\bar{a}\) and \(\bar{b}\) cannot generally be computed exactly, and any approximation introduces bias. Therefore, the projected SDE is employed only to construct the control via the following approximation:
\begin{equation}
\label{OC}
\zeta^*(t,\boldsymbol{x},z) \approx b(t,\boldsymbol{x}) \, \partial_x \log \bar{v}(t, h(\boldsymbol{x}), z) \cdot \nabla_{\boldsymbol{x}} h(\boldsymbol{x}).
\end{equation}
Finally, the IS estimator is given by
\begin{equation}
\label{ISestimator}
\mathcal{A}_{\mathrm{IS}} = \frac{1}{M_{\mathrm{IS}}} \sum_{m=1}^{M_{\mathrm{IS}}} 
\tilde{g}_w^{(m)},
\end{equation}
where
\begin{equation}
\label{tildeg}
\tilde{g}_w^{(m)} = g_w\left(\tilde{Z}_N^{(m)}\right) L_N^{(m)},
\end{equation}
and the likelihood ratio \( L_N^{(m)} \) is given by
\begin{equation}
\label{likelihood}
L_N^{(m)} = \prod_{n=0}^{N-1} \exp \left\{
  -\frac{1}{2} \Delta t \left\| \zeta^*(t_n, \tilde{\boldsymbol{X}}_n^{(m)}, \tilde{Z}_n^{(m)}) \right\|^2 
  - \left\langle \Delta W_n^{(m)}, \zeta^*(t_n, \tilde{\boldsymbol{X}}_n^{(m)}, \tilde{Z}_n^{(m)}) \right\rangle
\right\},
\end{equation}
where \( (\tilde{\boldsymbol{X}}_n^{(m)}, \tilde{Z}_n^{(m)}) \) denotes the \( m \)th sample path of the controlled process at time step \( n \).

The estimation of \(q_w = \mathbb{P}(Z(T) > w)\) using the unbiased IS estimator \(\mathcal{A}_{\mathrm{IS}}\) is affected by two types of error: a discretization error arising from the time discretization parameter \(N\), and a sampling error due to the finite number of trajectories \(M_{\mathrm{IS}}\). This approach decomposes the total relative error of the IS estimator \(\mathcal{A}_{\mathrm{IS}}\) into bias and statistical components:
\begin{equation}
\frac{\left| q_w - \mathcal{A}_{\mathrm{IS}} \right|}{q_w} 
\leq 
\underbrace{ \frac{ \left| q_w - \mathbb{E}[\mathcal{A}_{\mathrm{IS}}] \right| }{q_w} }_{= \epsilon_b}
+ 
\underbrace{ \frac{ \left| \mathbb{E}[\mathcal{A}_{\mathrm{IS}}] - \mathcal{A}_{\mathrm{IS}} \right| }{q_w} }_{= \epsilon_s},
\end{equation}
where \(\epsilon_b\) and \(\epsilon_s\) denote the relative bias and statistical error, respectively.
The bias \(\epsilon_b\) arises from the Euler--Maruyama time discretization and satisfies the following~\cite{kloeden1992stochastic}:
\begin{equation}
\epsilon_b = \frac{C_b}{q_w N},
\end{equation}
where \(C_b > 0\) can be estimated numerically. The statistical error \(\epsilon_s\) follows from the central limit theorem~\cite{asmussen2007stochastic}:
\begin{equation}
\epsilon_s = C \frac{\sqrt{V_{N}^{\mathrm{IS}}}}{q_w \sqrt{M_{\mathrm{IS}}}},
\end{equation}
where \(C\) represents the confidence constant, and \(V_{N}^{\mathrm{IS}}\) indicates the variance of the IS estimator:
\begin{equation}
\label{VIS}
V_{N}^{\mathrm{IS}} = \operatorname{Var} \left[ g_w(\tilde{Z}_N) 
\prod_{n=0}^{N-1} \exp \left(
  -\frac{1}{2} \Delta t \left\| \zeta^*(t_n, \tilde{\boldsymbol{X}}_n, \tilde{Z}_n) \right\|^2 
  - \langle \Delta W_n, \zeta^*(t_n, \tilde{\boldsymbol{X}}_n, \tilde{Z}_n) \rangle 
\right) \right].
\end{equation}

An optimal number of time steps \(N_{\mathrm{opt}}\) and optimal number of samples \(M_{\mathrm{IS}}\) are required to achieve a target relative tolerance \(\mathrm{TOL}\), given by
\begin{equation}
N_{\mathrm{opt}} = \frac{2 C_b}{q_w \, \mathrm{TOL}}, \label{eq:Nopt}
\end{equation}
\begin{equation}
M_{\mathrm{IS}} = \left( \frac{2 C}{q_w \, \mathrm{TOL}} \right)^2 V_{N_{\mathrm{opt}}}^{\mathrm{IS}}. \label{eq:Mopt}
\end{equation}
The computational cost associated with generating \(M_{\mathrm{IS}}\) samples of the controlled SDE is referred to as the sampling work:
\begin{equation}
\label{samplingwork}
\text{Work}_\text{sampling}^{\text{IS}} = C_{\text{SDE}} \cdot M_{\mathrm{IS}} \cdot N_{\mathrm{opt}},
\end{equation}
where \( C_{\text{SDE}} \) denotes the cost per sample per time step for solving the controlled SDE.
\subsection{Auxiliary HJB-PDE Accuracy Effect on IS Estimator Variance}
\label{section auxiliary HJB}
Previous studies~\cite{ben2022single,ben2023learning,awad2013zero,amar2025stochastic} have focused on constructing the IS estimator and have demonstrated the effectiveness of this approach for reducing variance. However, these studies have not examined in detail how accurately the optimal control is computed in practice, nor have they accounted for the cost of computing the control in their analyses. This section incorporates the auxiliary HJB-PDE cost into the efficiency analysis, enabling a fair and complete assessment of the overall performance of optimal IS. The proposed approach advances the method by quantifying the effect of the HJB-PDE solver accuracy on variance reduction and by introducing an optimized framework that balances preprocessing and sampling costs.

The optimal control \(\zeta^*\) is derived from the solution \(\bar{v}\) of the auxiliary HJB~\eqref{PDE}. However, \(\bar{v}\) is computed only on a discrete grid, and the evaluation of \(\zeta^*(t_n, \tilde{\boldsymbol{X}}_n^{(m)}, \tilde{Z}_n^{(m)})\) at off-grid points requires interpolation or extrapolation of the discrete solution at \((t_n, h(\tilde{\boldsymbol{X}}_n^{(m)}), \tilde{Z}_n^{(m)})\). Therefore, the accuracy of \(\zeta^*\) is affected by the auxiliary HJB-PDE discretization error, denoted by \(\epsilon_{\mathrm{PDE}}\). Therefore, the variance of the IS estimator~\eqref{VIS} is influenced by two factors: the time discretization parameter \(N\) and auxiliary HJB-PDE solver accuracy \(\epsilon_{\mathrm{PDE}}\). As \(N\) increases, the discrete control \(\zeta^*\) better approximates the continuous optimal control, which is known to yield zero variance~\cite{amar2025stochastic}. Similarly, improving \(\epsilon_{\mathrm{PDE}}\) enhances the quality of the control, provided \(N\) is large enough. In summary, the variance \(V_{N}^{\mathrm{IS}}\) depends on the SDE discretization level \(N\) and the auxiliary HJB-PDE discretization error \(\epsilon_{\mathrm{PDE}}\), and it decreases as either aspect is improved. Therefore, this work defines $V_{N}^{\mathrm{IS}} = V_{N}^{\mathrm{IS}}(\epsilon_{\mathrm{PDE}}).$

This work aims to achieve the lowest possible variance to reduce the computational effort required for sampling \eqref{samplingwork}. The cost of solving the auxiliary HJB-PDE should be controlled to prevent it from becoming prohibitively high. This approach introduces a trade-off between the variance reduction and computational cost. The objective is to determine the minimal achievable variance without incurring excessive auxiliary HJB-related costs. Let \(\text{Work}_\text{PDE}(\epsilon_{\mathrm{PDE}})\) denote the computational cost of solving the auxiliary HJB~\eqref{PDE}  with accuracy \(\epsilon_{\mathrm{PDE}}\). This paper proposes  Algorithm \ref{algSLIS} to determine the optimal total work:
\begin{equation}
\label{totalWork}
\text{Work}_{\mathrm{IS}}(V_{N}^{\mathrm{IS}},\epsilon_{\mathrm{PDE}}) = \text{Work}_\text{sampling}^{\mathrm{IS}}(V_{N}^{\mathrm{IS}}) + \text{Work}_\text{PDE}(\epsilon_{\mathrm{PDE}}).
\end{equation}

\begin{algorithm}
\caption{Optimization of total work of importance sampling}
\label{algSLIS}
\begin{algorithmic}[1]
\State \textbf{Input: } Target tolerance \( \mathrm{TOL} \), initial auxiliary HJB--PDE accuracy \( \epsilon_{\mathrm{PDE}}^0 \)
\State Compute \( N_{\mathrm{opt}} \) using~\eqref{eq:Nopt}
\State Set \( \text{Work}_{\mathrm{IS}}^{\mathrm{opt}} \gets \infty \) and \( \epsilon_{\mathrm{PDE}}\gets \epsilon_{\mathrm{PDE}}^0 \)
\While{true}
    \State Compute variance \( V = V_{N_{\mathrm{opt}}}^{\mathrm{IS}}(\epsilon_{\mathrm{PDE}}) \)
    \State Compute total work: \( \text{Work}_{\mathrm{IS}} = \text{Work}_{\mathrm{sampling}}^{\mathrm{IS}}(V) + \text{Work}_{\mathrm{PDE}}(\epsilon_{\mathrm{PDE}}) \)
    \If{ \( \text{Work}_{\mathrm{IS}} < \text{Work}_{\mathrm{IS}}^{\mathrm{opt}} \) }
        \State \( \text{Work}_{\mathrm{IS}}^{\mathrm{opt}} \gets \text{Work}_{\mathrm{IS}} \)
        \State \( \epsilon_{\mathrm{PDE}}^{\mathrm{opt}} \gets \epsilon_{\mathrm{PDE}} \)
        \State Refine auxiliary HJB--PDE by reducing \( \epsilon_{\mathrm{PDE}} \)
    \Else
        \State \textbf{break}
    \EndIf
\EndWhile
\State \textbf{Return: } \( \epsilon_{\mathrm{PDE}}^{\mathrm{opt}}, \text{Work}_{\mathrm{IS}}^{\mathrm{opt}} \)
\end{algorithmic}
\end{algorithm}
 \begin{remark}
In practice, HJB-PDE work can be predicted from its known scaling with mesh refinement, and the sampling work can be estimated by extrapolating the variance with respect to the HJB-PDE accuracy for a fixed~\(\mathrm{TOL}\). This method enables the avoidance of solving the HJB-PDE at every step of the optimization loop. Instead, the equation is solved only once, after the optimal accuracy \( \epsilon_{\mathrm{PDE}}^{\mathrm{opt}} \) is identified.
\end{remark}

\subsection{Numerical Results}
\label{section num}
This section addresses the estimation of the fade duration in wireless communication systems using an optimal IS estimator. The fade duration is the total time interval during which the signal power remains below a threshold. The instantaneous power is modeled as \(I^2 + Q^2 \), where \( I \) and \( Q \) represent in-phase and quadrature components of the signal, respectively. Following prior work~\cite{feng2007stochastic,charalambous1999stochastic}, these components were modeled as solutions to the coupled Ornstein--Uhlenbeck processes. This work focuses on the Rice fading scenario, where \( I \) and \( Q \) satisfy the SDE system:
\begin{equation}
\label{ricedynamics}
\left\{
\begin{aligned}
dI(s) &= k (\theta - I(s)) \, ds + \beta \, dW^{(I)}(s), \\
dQ(s) &= k (\theta - Q(s)) \, ds + \beta \, dW^{(Q)}(s), \quad 0 < s < T, \\
I(0) &= I_0, \\
Q(0) &= Q_0,
\end{aligned}
\right.
\end{equation}
where \( k, \theta, \beta \in \mathbb{R} \), and \( W^{(I)} \) and \( W^{(Q)}\) are independent Brownian processes. A previous study~\cite{amar2025stochastic} introduced a surrogate process \( \bar{X}(s) \) designed to approximate the distribution of the signal power \( X \), which evolves according to the SDE:
\begin{equation}
\label{MPSDE}
\left\{
\begin{aligned}
d\bar{X}(s) &= \bar{a}(s, \bar{X}(s)) \, ds + \bar{b}(s, \bar{X}(s)) \, dW(s), \quad 0 < s < T, \\
\bar{X}(0) &= I_0^2+Q_0^2,
\end{aligned}
\right.
\end{equation}
where the drift and diffusion terms \( \bar{a}\) and \(\bar{b} \) are derived via Markovian projection, as detailed in~\cite{amar2025stochastic}. This setting corresponds to the general framework outlined in the previous section, where \( \boldsymbol{X} = (I, Q) \) and \( h(\boldsymbol{x}) = x_1^2 + x_2^2 \). In this formulation, an IS estimator is constructed to estimate the CCDF of the fade duration as described in~\eqref{ISestimator}:
$Z(T) = \int_0^T f(h(\boldsymbol{X}(s))) \, ds.$
This work emphasizes quantifying the influence of the accuracy of the auxiliary HJB-PDE solution on the variance reduction achieved by the estimator.

The optimal control is obtained from~\eqref{OC} by solving the auxiliary HJB-PDE~\eqref{PDE}. This work discretizes the auxiliary HJB-PDE using an upwind scheme for the first derivatives in \(x\) and \(z\) to ensure stability in advection-dominated regions \cite{langtangen2017finite} and applies a central difference scheme for the second derivative in \(x\). These choices yield first-order accuracy in space in the \(x\) and \(z\) directions. Time integration is performed using an implicit--explicit splitting method \cite{leveque2007finite}, treating the \(z\)-advection term explicitly to preserve discontinuity and treating the \(x\)-advection-diffusion term implicitly for stability. This scheme is first-order accurate in time. The computational domains in \(t\), \(x\), and \(z\) are uniformly discretized using \(P\) steps in each direction, ensuring stability according to the eigenvalue analysis. Appropriate boundary conditions in \(z\) are derived from the analysis of the auxiliary HJB-PDE characteristics, and nonreflective extrapolation is applied in \(x\). The system is then solved backward in time from the terminal condition to compute the value function and associated optimal control. Therefore, the discretization error can be approximated as \( \varepsilon_{\text{PDE}} = \mathcal{O} (P^{-1} )\). Given the implicit--explicit scheme, the computational cost of solving the auxiliary HJB-PDE can be expressed as follows:
\begin{equation}
\label{PDEWork}
\text{Work}_{\text{PDE}} = C_{\text{PDE}} P^3,
\end{equation}
where \( C_{\text{PDE}} \) denotes a numerically determined constant. 

For all following experiments, the model parameters were set as follows: \( T = 5 \), \( I_0 = Q_0 = 1 \), \( k = 0.25 \), \( \theta = 0.2 \), \( \beta = 0.375 \), and threshold \( \gamma_{\text{th}} = 0.25 \). This approach solves the auxiliary HJB-PDE for several values of \( P \) to validate the expression in~\eqref{PDEWork}. Figure~\ref{costpde} plots the resulting computational cost as a function of \( P \). The constant \( C_{\text{PDE}} \approx 7 \times 10^{-7} \) is estimated from this analysis.
\begin{figure}[ht] 
\begin{center}
\includegraphics[scale = 0.4]{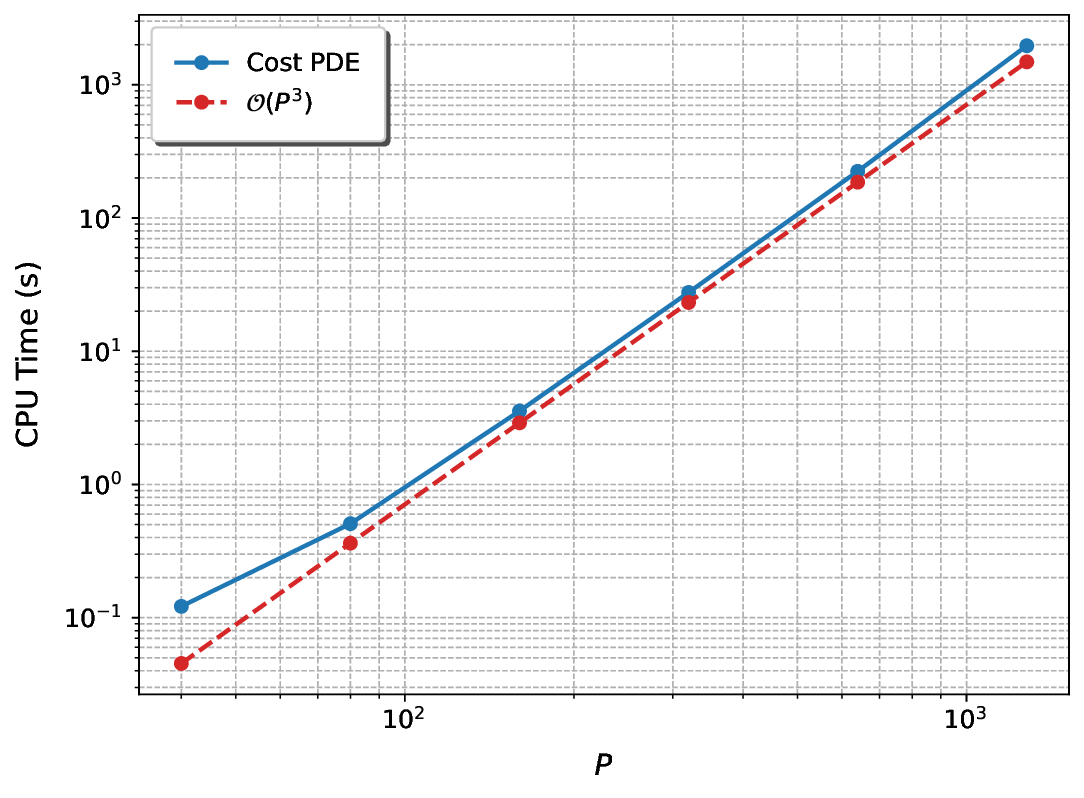} 
\caption{Auxiliary HJB-PDE solver cost as a function of the discretization parameter \( P \).}
\label{costpde}
\end{center}
\end{figure} 

Figure~\ref{varianceSLIS} displays the variance \( V_{N}^{\mathrm{IS}}\) for several values of \( P \) to examine how the auxiliary HJB-PDE discretization parameter \( P \) influences the quality of the computed control in terms of variance reduction and to assess the effect of \( N \). The results reveal that variance reduction improves with an increasing \( P \), if \( N \) is sufficiently large; likewise, it improves with an increasing \( N \), if \( P \) is sufficiently large. Variance saturation was observed for a large \( N \), due to the limiting effect of the auxiliary HJB-PDE discretization error on control accuracy. This saturation is postponed when the auxiliary HJB-PDE is solved more accurately (i.e., for larger values of \( P \)).
\begin{figure}[ht] 
\begin{center}
\includegraphics[scale = 0.4]{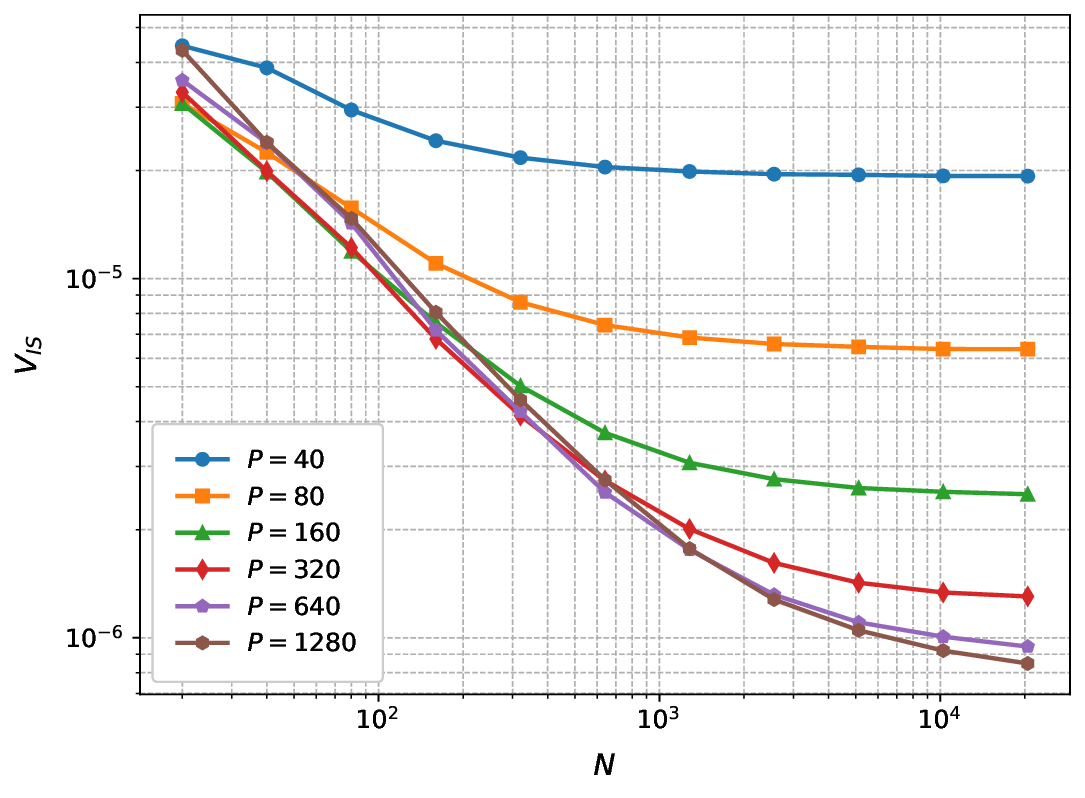} 
\caption{Importance sampling variance \( V_{N}^{\mathrm{IS}} \) for various auxiliary HJB-PDE discretizations \( P \) for $q_w=2 \times 10^{-3}$.}
\label{varianceSLIS}
\end{center}
\end{figure}

To estimate \( q_w \) with relative accuracy \( \text{TOL} \), \( N_{\mathrm{opt}} \) as defined in~\eqref{eq:Nopt} is required. The optimal number of discretization steps \( P_{\mathrm{opt}} \) for solving the auxiliary HJB--PDE depends on \( N_{\mathrm{opt}} \), and thus on \( \text{TOL} \). Algorithm~\ref{algSLIS} determines the optimal auxiliary HJB--PDE accuracy and corresponding \( P_{\mathrm{opt}} \) that minimizes the total computational work of the IS estimator.
Figure~\ref{SLIS} plots the optimal IS variance \( V_{N_{\mathrm{opt}}}^{\mathrm{IS}}(P_{\mathrm{opt}}) \) and the total computational work~\eqref{totalWork} as functions of \( \text{TOL} \), compared with the cost of estimating \( q_w \) using the crude MC method. This work estimates the constants \( C_{\mathrm{SDE}} \) and \( C_b \) in the sampling cost~\eqref{samplingwork} numerically as \( C_{\mathrm{SDE}} \approx 1.3 \times 10^{-7} \) and \( C_b = 0.02 \).

\begin{figure}[ht]
\begin{center}
\includegraphics[width=2.8in]{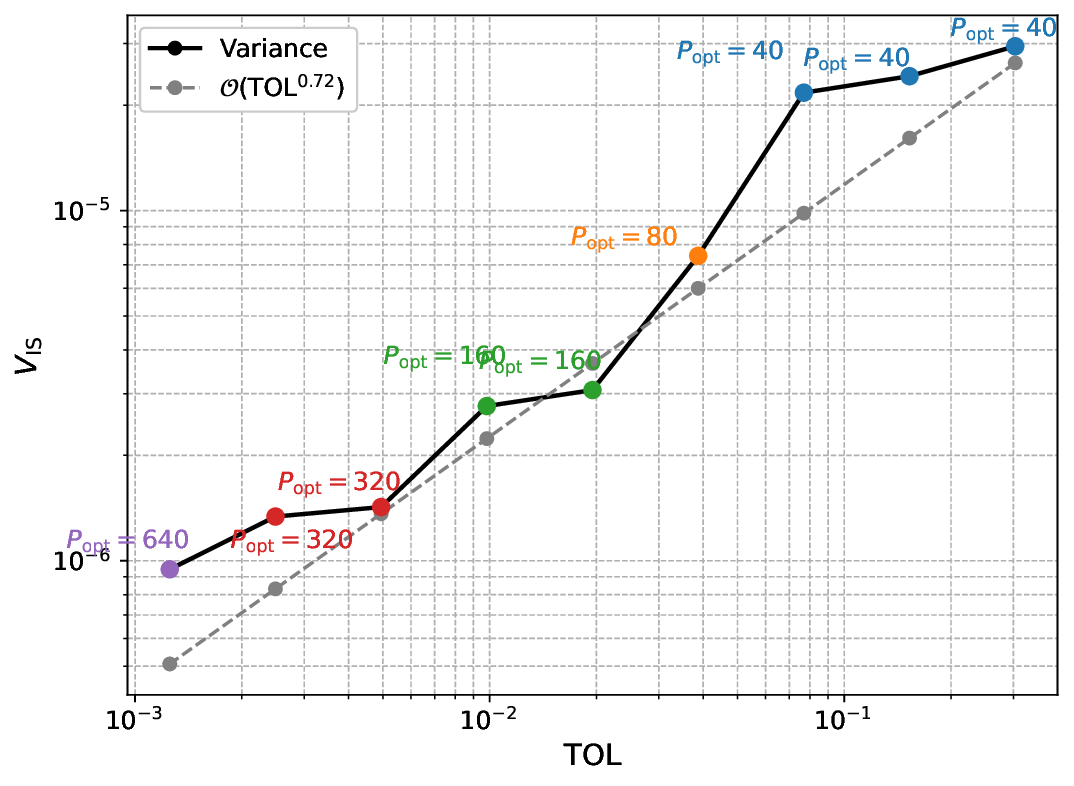}
\hspace{-0.1cm}
\includegraphics[width=2.8in]{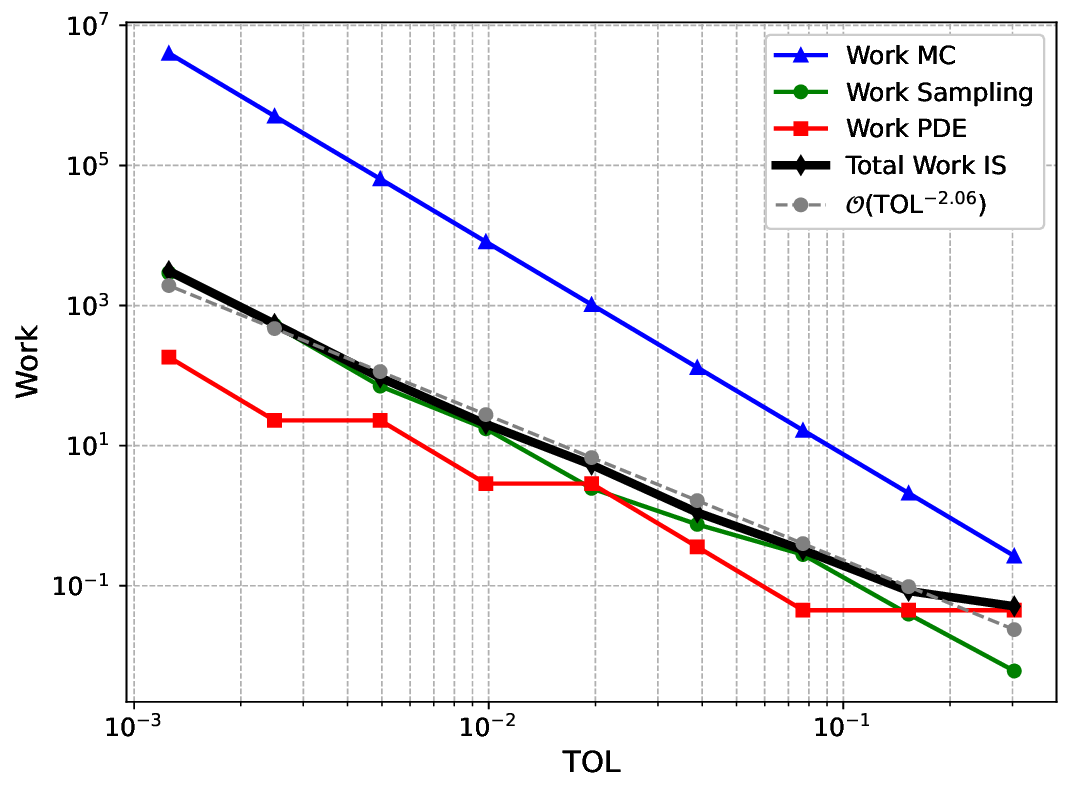}
\caption{Importance sampling variance and total work versus tolerance \( \text{TOL} \) for $q_w=2 \times 10^{-3}$.}
\label{SLIS}
\vspace{-3mm}
\end{center}
\end{figure}
The variance reduction illustrated in Figure~\ref{SLIS} is driven by the increase in \( N_{\mathrm{opt}} \) and \( P_{\mathrm{opt}} \) as \( \mathrm{TOL} \) decreases. This results in a variance decay rate of approximately 0.8, improving the total work complexity to 2.2 for the IS estimator, compared to a rate of 3 for the standard MC method.
These results highlight the importance of accounting for the auxiliary HJB-PDE cost to evaluate fully the efficiency of optimal IS for rare-event estimation. To the best of our knowledge, this is the first work to incorporate preprocessing time explicitly into the cost model and propose an optimized strategy to minimize the total work. In contrast, previous studies have typically solved the auxiliary HJB-PDE off-line. 
\section{Multilevel Monte Carlo}
\label{Section 3}
The SLIS method focuses on minimizing the cost of estimating a rare event probability while maintaining a fixed relative statistical error. However, achieving a small total error also requires controlling the bias, involving solving the SDE with \( N_{\mathrm{opt}} \) time steps, a task that can become computationally expensive, especially for small tolerances. This work employs the MLMC method to reduce the computational cost required to achieve a given tolerance, addressing this problem. By combining simulations across multiple discretization levels, the MLMC method shifts most of the computational effort to the coarse level, significantly reducing the overall cost. The following section considers the general MLMC framework and establishes the necessary and sufficient conditions under which the MLMC method outperforms the single-level MC (SLMC) method.
\subsection{Framework}
This work estimates the quantity \( q_w = \mathbb{E}[g_w(Z(T))] \). In the MLMC framework, a hierarchy of discretizations is defined with \( N_\ell = N_0 \times 2^\ell \) time steps for the levels \( \ell = 0, 1, \dots, L \). If \( g_w^{(\ell)} \) denotes the estimator computed using \( N_\ell \) time steps, the MLMC estimator is defined as follows \cite{giles2015multilevel}:
\begin{equation}
\mathcal{A}_{\mathrm{MLMC}}  = \frac{1}{M_0} \sum_{m=1}^{M_0} g_w^{(0,m)} + \sum_{\ell=1}^L \frac{1}{M_\ell} \sum_{m=1}^{M_\ell} \left( g_w^{(\ell,m)} - g_w^{(\ell-1,m)} \right),
\end{equation}
where \( M_\ell \) represents the number of samples at level \( \ell \), and each difference \( g_w^{(\ell)} - g_w^{(\ell-1)} \) is simulated using the same Brownian path at levels~\(\ell\) and~\(\ell-1\) to reduce variance.

For the relative total error of the MLMC estimator to satisfy a fixed tolerance \( \mathrm{TOL} \), the optimal number of levels \( L_{\mathrm{opt}} \) must be selected such that the relative bias is less than \( \frac{\mathrm{TOL}}{2}  \). This is achieved by selecting
\begin{equation}
\label{lopt}
L_{\mathrm{opt}} = \log_2\left(\frac{N_{\mathrm{opt}}}{N_0}\right),
\end{equation}
where \( N_{\mathrm{opt}} \) denotes the optimal number of time steps required to control the bias, as defined in~\eqref{eq:Nopt}.
Once \( L_{\mathrm{opt}} \) is fixed, the number of samples \( M_\ell \) at each level \( \ell \) is set to ensure that the variance of the MLMC estimator satisfies
\begin{equation}
\mathrm{Var}[\mathcal{A}_{\mathrm{MLMC}}] \leq \left( \frac{q_w \mathrm{TOL}}{2 C} \right)^2,
\end{equation}
guaranteeing that the relative statistical error is below \( \frac{\mathrm{TOL}}{2} \). The optimal number of samples per level is given by the following~\cite{giles2015multilevel}:
\begin{equation}
M_\ell =
\left( \frac{2 C}{q_w \, \mathrm{TOL}} \right)^2
\begin{cases}
\displaystyle
\sqrt{ \frac{V_{0}}{C_{0}} } \left( \sqrt{V_{0} C_{0}} + \sum_{j=1}^{L_{\mathrm{opt}}} \sqrt{V_{j,j-1} C_{j,j-1}} \right), & \ell = 0, \\[12pt]
\displaystyle
\sqrt{ \frac{V_{\ell,\ell-1}}{C_{\ell,\ell-1}} } \left( \sqrt{V_{0} C_{0}} + \sum_{j=1}^{L_{\mathrm{opt}}} \sqrt{V_{j,j-1} C_{j,j-1}} \right), & \ell \geq 1,
\end{cases}
\end{equation}
where \( C_{0} \) and \( V_{0} \) represent the cost and variance, respectively, of a single sample at level 0, computed using \( N_0 \) time steps, and \( C_{\ell,\ell-1} \) and \( V_{\ell,\ell-1} \) denote the cost and variance, respectively, of one sample of the difference \( g_w^{(\ell)} - g_w^{(\ell-1)} \) for \( \ell = 1, \dots, L_{\mathrm{opt}} \).
In standard MLMC settings, \(V_{0}\) is typically assumed to be of the same order across discretizations, bounded away from zero. In contrast, this setting allows \(V_{0}\) to depend on the discretization parameter \(N_0\), potentially decreasing significantly as \(N_0\) increases. This behavior is relevant when applying IS, where the variance of the IS estimator depends on the number of time steps \(N\), as presented in Figure~\ref{varianceSLIS}.

\subsection{Sufficient and Necessary Condition for MLMC Advantages}
The sampling work of the MLMC estimator is given by the following:
\begin{equation}
\label{samplingworkMLMC}
\begin{aligned}
\text{Work}_\text{sampling}^{\text{MLMC}}&=M_0 C_{0} +\sum_{\ell=1}^{L_{\mathrm{opt}}} M_\ell C_{\ell,\ell-1}\\ &=\left( \frac{2 C}{q_w \, \mathrm{TOL}} \right)^2
\left(\sqrt{V_{0} C_{0}}+ \sum_{\ell=1}^{L_{\mathrm{opt}}} \sqrt{V_{\ell,\ell-1} C_{\ell,\ell-1}}\right)^2.
\end{aligned}
\end{equation}
In contrast, the work of the SLMC estimator is given by the following:
\begin{equation}
\label{eq:work_slmc}
\text{Work}_\text{sampling}^{\text{SLMC}} = \left( \frac{2 C}{q_w \, \mathrm{TOL}} \right)^2 V_{L_{\mathrm{opt}}} C_{L_{\mathrm{opt}}}.
\end{equation}
In the standard case where the single-level variance is bounded, the MLMC method achieves a significantly lower asymptotic computational cost than the SLMC method as \( L_{\mathrm{opt}} \) increases. However, in practical scenarios, this asymptotic regime may not be attained. In such cases, the choice of the initial discretization level \( N_0 \) is crucial, especially when \( L_{\mathrm{opt}} \) is small.

A starting level \( N_0 \) is considered favorable if initiating the MLMC method from this level yields a lower overall sampling cost than starting from a finer level, equivalent to the following condition:
\begin{equation}
\label{ineqML}
\sqrt{V_{1} \, C_{1}} > \sqrt{V_{0} \, C_{0}} + \sqrt{V_{1,0} \, C_{1,0}}.
\end{equation}
Generally, a starting level \( N_{\ell_0} = 2^{\ell_0} N_0 \) is considered advantageous if it satisfies the following:
\begin{equation}
\label{condML0}
\sqrt{V_{{\ell_0+1}} \, C_{ {\ell_0+1}}} > \sqrt{V_{{\ell_0}} \, C_{{\ell_0}}} + \sqrt{V_{\ell_0+1,\ell_0} \, C_{\ell_0+1,\ell_0}}.
\end{equation}
This analysis leads to a condition on the ratio \( \frac{V_{\ell_0+1,\ell_0}}{V_{\ell_0}} \), formalized in Proposition~\ref{prop1}. The result is established in a general setting where the single-level variance \( V_{\ell} \) may vary with the discretization level. Notably, no specific assumption was imposed on \( V_{\ell} \), allowing the proposition to accommodate a wide range of behavior. This generality is relevant in the proposed IS framework, where the single-level variance often decays to zero as the level increases. A simplified version of the result is presented in Corollary~\ref{cor}, corresponding to the standard case where the single-level variance is assumed to be uniformly bounded away from zero.

\begin{proposition}
\label{prop1}
Let \( \mathrm{TOL} \) be a prescribed target tolerance and \( L_{\mathrm{opt}} \) denote the corresponding optimal MLMC level, defined in~\eqref{lopt}, satisfying the bias constraint. If
\begin{enumerate}
\item  \( C_{\ell} = C_{0} 2^\ell \), and
\item \( C_{\ell+1,\ell} = C_{N_\ell} + C_{{\ell+1}} = 3C_{0}2^\ell \), 
\end{enumerate}
then the MLMC method is \textbf{strictly} better than the SLMC method in terms of sampling work, that is, 
\[\text{Work}_\text{sampling}^{\text{MLMC}} < \text{Work}_\text{sampling}^{\text{SLMC}},\]
\textbf{if and only if} a level \( \ell_0 < L_{\mathrm{opt}} \) exists such that
\begin{equation}
\label{condMLnewcase1}
\sqrt{\frac{V_{\ell+1,\ell}}{V_{{\ell}}}} < \frac{\sqrt{2\frac{V_{{\ell+1}}}{V_{{\ell}}}} - 1}{\sqrt{3}}  \qquad \textbf{for all } \quad \ell \geq \ell_0.
\end{equation}
\end{proposition}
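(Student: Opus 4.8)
The plan is to cancel the common constants, reduce the work inequality to an elementary comparison of two positive numbers, and then exploit a telescoping structure indexed by the coarsest level of the hierarchy. By \eqref{samplingworkMLMC} and \eqref{eq:work_slmc} both works carry the prefactor $(2C/(q_w\,\mathrm{TOL}))^2$; cancelling it and taking square roots (every term is nonnegative) shows that $\mathrm{Work}^{\mathrm{MLMC}}_{\mathrm{sampling}}<\mathrm{Work}^{\mathrm{SLMC}}_{\mathrm{sampling}}$ is equivalent to $\sqrt{V_0 C_0}+\sum_{\ell=1}^{L_{\mathrm{opt}}}\sqrt{V_{\ell,\ell-1}C_{\ell,\ell-1}}<\sqrt{V_{L_{\mathrm{opt}}}C_{L_{\mathrm{opt}}}}$. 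For $0\le j\le L_{\mathrm{opt}}$ I set $W_j:=\sqrt{V_j C_j}+\sum_{\ell=j+1}^{L_{\mathrm{opt}}}\sqrt{V_{\ell,\ell-1}C_{\ell,\ell-1}}$, the (square root of the) optimal sampling work of the MLMC hierarchy with coarsest level $j$ and finest level $L_{\mathrm{opt}}$. Then $W_0$ is the left-hand side, $W_{L_{\mathrm{opt}}}=\sqrt{V_{L_{\mathrm{opt}}}C_{L_{\mathrm{opt}}}}$ is the right-hand side, so the claim becomes a statement about the monotonicity of the finite sequence $(W_j)_{j=0}^{L_{\mathrm{opt}}}$.

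Next I telescope. From the definition, $W_{\ell+1}-W_\ell=\sqrt{V_{\ell+1}C_{\ell+1}}-\sqrt{V_\ell C_\ell}-\sqrt{V_{\ell+1,\ell}C_{\ell+1,\ell}}$; substituting the cost hypotheses $C_\ell=C_0 2^\ell$, $C_{\ell+1}=2C_0 2^\ell$, $C_{\ell+1,\ell}=3C_0 2^\ell$ and factoring $\sqrt{C_0 2^\ell}$ gives $W_{\ell+1}-W_\ell=\sqrt{C_0 2^\ell}\,\bigl(\sqrt{2V_{\ell+1}}-\sqrt{V_\ell}-\sqrt{3V_{\ell+1,\ell}}\bigr)$. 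Dividing the bracket by $\sqrt{V_\ell}>0$ and isolating $\sqrt{V_{\ell+1,\ell}/V_\ell}$, one sees that $W_{\ell+1}>W_\ell$ holds \emph{if and only if} \eqref{condMLnewcase1} holds at level $\ell$ — which is precisely the advantageousness condition \eqref{condML0} rewritten under the cost hypotheses. Consequently, ``\eqref{condMLnewcase1} holds for all $\ell\ge\ell_0$'' is exactly the statement that $(W_j)$ is \emph{strictly increasing on the terminal block} $\{\ell_0,\ell_0+1,\dots,L_{\mathrm{opt}}\}$.

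For the equivalence, the ``if'' direction is then immediate: if $(W_j)$ strictly increases on $\{\ell_0,\dots,L_{\mathrm{opt}}\}$ then $W_{\ell_0}<W_{L_{\mathrm{opt}}}$, so the MLMC hierarchy truncated to coarsest level $\ell_0$ already has sampling work strictly below that of SLMC at level $L_{\mathrm{opt}}$; since the MLMC estimator is run with the favorable (cost-minimal) coarsest level, the setting of \eqref{ineqML}--\eqref{condML0}, this yields $\mathrm{Work}^{\mathrm{MLMC}}_{\mathrm{sampling}}<\mathrm{Work}^{\mathrm{SLMC}}_{\mathrm{sampling}}$. For the ``only if'' direction, the strict inequality says that $L_{\mathrm{opt}}$ is not a minimizer of $j\mapsto W_j$; I then argue that $j\mapsto W_j$ is unimodal (non-increasing, then strictly increasing), so that the largest index $\ell_0<L_{\mathrm{opt}}$ from which $(W_j)$ climbs strictly up to $W_{L_{\mathrm{opt}}}$ exists, and by the previous paragraph this $\ell_0$ is the desired level.

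The main obstacle is exactly the unimodality invoked in the ``only if'' direction — equivalently, that the increments $W_{\ell+1}-W_\ell$, whose sign equals that of $\sqrt{2V_{\ell+1}/V_\ell}-1-\sqrt{3V_{\ell+1,\ell}/V_\ell}$, change sign at most once. This is where the regularity of the multilevel variances enters: the coupling variance $V_{\ell,\ell-1}$ must decay fast enough relative to the single-level variance $V_\ell$ — ultimately a consequence of the zero-variance property of the optimal control, which forces $V_{\ell,\ell-1}\to0$ — so that once refinement ceases to reduce cost it never resumes doing so. I would isolate this monotonicity as a short lemma; the remaining steps are routine algebra.
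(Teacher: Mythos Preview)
Your setup and the sufficiency direction coincide with the paper's: both rewrite \eqref{condMLnewcase1} at level $\ell$ as the cost--variance inequality $\sqrt{V_{\ell+1}C_{\ell+1}}>\sqrt{V_\ell C_\ell}+\sqrt{V_{\ell+1,\ell}C_{\ell+1,\ell}}$ (your $W_{\ell+1}>W_\ell$), and then telescope over $\ell\in\{\ell_0,\dots,L_{\mathrm{opt}}-1\}$ to obtain $W_{\ell_0}<W_{L_{\mathrm{opt}}}$. That part is correct and is exactly the paper's induction.

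The genuine gap is in your necessity argument. You rely on unimodality of $j\mapsto W_j$, which you do not prove and which does \emph{not} follow from the proposition's hypotheses: nothing in the assumptions rules out, for instance, $(W_0,W_1,W_2,W_3)=(2,1,3,2)$, where $\min_j W_j<W_{L_{\mathrm{opt}}}$ yet no terminal block $\{\ell_0,\dots,L_{\mathrm{opt}}\}$ is strictly increasing. Your appeal to the zero-variance property of the optimal control and to ``regularity of the multilevel variances'' imports IS-specific structure that is absent from the statement of the proposition, so the ``short lemma'' you envisage cannot be supplied from the given data alone. The paper does not go through unimodality at all; for necessity it argues the contrapositive, asserting that if the condition fails for every $\ell_0<L_{\mathrm{opt}}$ then the \emph{reversed} inequality $W_{\ell+1}\le W_\ell$ holds for every $\ell<L_{\mathrm{opt}}$, and then telescopes backward from $\ell=L_{\mathrm{opt}}-1$ down to any chosen coarse level to conclude $W_{L_{\mathrm{opt}}}\le W_\ell$ for all $\ell$. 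You should note that this passage also deserves scrutiny --- the negation of ``$\exists\,\ell_0\;\forall\,\ell\ge\ell_0$'' is ``$\forall\,\ell_0\;\exists\,\ell\ge\ell_0$'', not ``$\forall\,\ell$'' --- but in any event your unimodality route is not how the paper closes the argument, and as written it introduces an assumption strictly stronger than what the proposition provides.
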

\begin{proof}
See Appendix~\ref{proof1}.
\end{proof}
\begin{corollary}
\label{cor}
Let \( \mathrm{TOL} \) be a prescribed target tolerance and \( L_{\mathrm{opt}} \) be the corresponding optimal MLMC level defined in~\eqref{lopt}, satisfying the bias constraint. Let us suppose that:
\begin{enumerate}
    \item The variance of single-level estimators is nearly constant across levels, that is, \[
    1 - \varepsilon \leq \frac{V_0}{V_\ell} \leq 1 + \varepsilon \qquad \text{for all } \ell \geq 0,\text{ with } \varepsilon > 0.\]
\item  \( C_{\ell} = C_{0} 2^\ell \). 
\item \( C_{\ell+1,\ell} = C_{N_\ell} + C_{{\ell+1}} = 3C_{0}2^\ell \). 
\end{enumerate}
Thus, a \textbf{necessary} condition for the MLMC method to be \textbf{strictly better} than the SLMC method in terms of sampling work is that a level \( \ell_0 < L_{\mathrm{opt}} \) exists such that
 \begin{equation}
\label{corollaryeq}
\sqrt{\frac{V_{\ell_0+1,\ell_0}}{V_0}} < 
\frac{\sqrt{2(1+\varepsilon)} - \sqrt{1-\varepsilon}}{\sqrt{3(1 - \varepsilon^2)}}.
\end{equation}
\end{corollary}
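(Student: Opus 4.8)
The plan is to derive the corollary as a direct specialization of Proposition~\ref{prop1}. The starting point is the necessary-and-sufficient condition~\eqref{condMLnewcase1}: under the given cost assumptions (2) and (3), which match hypotheses (1) and (2) of the proposition, the MLMC method is strictly better than SLMC if and only if there exists a level $\ell_0 < L_{\mathrm{opt}}$ with
\[
\sqrt{\frac{V_{\ell+1,\ell}}{V_{\ell}}} < \frac{\sqrt{2 V_{\ell+1}/V_{\ell}} - 1}{\sqrt{3}} \qquad \text{for all } \ell \ge \ell_0.
\]
In particular, a necessary condition is that this holds at $\ell = \ell_0$ itself. The task is then to bound the $\ell_0$-instance of this inequality using only the near-constancy hypothesis (1) on $V_\ell$, replacing the two ratios $V_{\ell_0+1}/V_{\ell_0}$ and $V_{\ell_0+1,\ell_0}/V_{\ell_0}$ by expressions involving $V_0$ and $\varepsilon$.

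First I would rewrite the right-hand side to make the dependence on $V_0$ explicit. Using hypothesis (1), $V_{\ell_0} \le (1+\varepsilon)^{-1} \cdot \text{something}$ — more precisely, $1-\varepsilon \le V_0/V_{\ell_0} \le 1+\varepsilon$ gives $V_{\ell_0} \ge V_0/(1+\varepsilon)$ and $V_{\ell_0} \le V_0/(1-\varepsilon)$, and likewise for $V_{\ell_0+1}$. Hence $V_{\ell_0+1}/V_{\ell_0} \ge (1-\varepsilon)/(1+\varepsilon)$, so $\sqrt{2V_{\ell_0+1}/V_{\ell_0}} - 1 \ge \sqrt{2(1-\varepsilon)/(1+\varepsilon)} - 1$; this lower-bounds the numerator. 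Wait — for a \emph{necessary} condition we want to \emph{upper}-bound the right-hand side of~\eqref{condMLnewcase1} and \emph{lower}-bound the left-hand side, so that if the stated corollary inequality~\eqref{corollaryeq} fails then~\eqref{condMLnewcase1} fails. So the correct move is: upper-bound $\sqrt{2V_{\ell_0+1}/V_{\ell_0}}-1$ using $V_{\ell_0+1}/V_{\ell_0} \le (1+\varepsilon)/(1-\varepsilon)$, giving $\sqrt{2(1+\varepsilon)/(1-\varepsilon)} - 1$ as an upper bound for the numerator, and lower-bound the left side $\sqrt{V_{\ell_0+1,\ell_0}/V_{\ell_0}} \ge \sqrt{V_{\ell_0+1,\ell_0}/(V_0/(1-\varepsilon))} = \sqrt{(1-\varepsilon)V_{\ell_0+1,\ell_0}/V_0}$. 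Chaining these, a necessary condition for~\eqref{condMLnewcase1} at $\ell_0$ is
\[
\sqrt{(1-\varepsilon)\,\frac{V_{\ell_0+1,\ell_0}}{V_0}} < \frac{\sqrt{2(1+\varepsilon)/(1-\varepsilon)} - 1}{\sqrt{3}},
\]
and dividing through by $\sqrt{1-\varepsilon}$ and simplifying $\sqrt{2(1+\varepsilon)/(1-\varepsilon)}/\sqrt{1-\varepsilon} = \sqrt{2(1+\varepsilon)}/(1-\varepsilon)$, together with $1/\sqrt{1-\varepsilon} \cdot (1/\sqrt 3)$, one collects everything over the common denominator $\sqrt{3(1-\varepsilon)^2(1+\varepsilon)/(1+\varepsilon)}$... this is exactly the algebra that produces $\sqrt{3(1-\varepsilon^2)}$ in~\eqref{corollaryeq}. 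So the final step is just to verify that $(\sqrt{2(1+\varepsilon)/(1-\varepsilon)} - 1)/(\sqrt{3}\sqrt{1-\varepsilon})$ simplifies to $(\sqrt{2(1+\varepsilon)} - \sqrt{1-\varepsilon})/\sqrt{3(1-\varepsilon^2)}$, which follows by multiplying numerator and denominator by $\sqrt{1+\varepsilon}$ inside, and noting $(1-\varepsilon)\sqrt{1+\varepsilon} \cdot \sqrt{1+\varepsilon}$-type cancellations.

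The main obstacle — really the only delicate point — is getting the direction of every inequality right, since we are extracting a \emph{necessary} condition: we must ensure each replacement weakens~\eqref{condMLnewcase1} (makes it easier to satisfy), so that its negation implies the negation of the original. Concretely, the left side $\sqrt{V_{\ell_0+1,\ell_0}/V_{\ell_0}}$ must be replaced by something \emph{no larger}, and the right side by something \emph{no smaller}; with the two-sided bound on $V_0/V_\ell$ this forces the specific choices $V_{\ell_0}\le V_0/(1-\varepsilon)$ on the left and $V_{\ell_0+1}/V_{\ell_0}\le(1+\varepsilon)/(1-\varepsilon)$ on the right. Once the bookkeeping is fixed, the remainder is the routine radical simplification sketched above; I would present it in two or three displayed lines and conclude.
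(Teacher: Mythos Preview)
Your overall strategy is right and matches the paper's: invoke Proposition~\ref{prop1} to get the level-$\ell_0$ inequality, then use the two-sided bound on $V_0/V_\ell$ to replace the level-dependent variances by $V_0$ and $\varepsilon$. The direction-of-inequality bookkeeping you describe is also correct. The gap is in the algebra at the very end: your bounds do \emph{not} simplify to~\eqref{corollaryeq}.

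Concretely, you lower-bound the left side of~\eqref{condMLnewcase1} using $V_{\ell_0}\le V_0/(1-\varepsilon)$, and you upper-bound the right side using $V_{\ell_0+1}/V_{\ell_0}\le (1+\varepsilon)/(1-\varepsilon)$, which uses $V_{\ell_0}\ge V_0/(1+\varepsilon)$. So you bound $V_{\ell_0}$ \emph{from above} in one term and \emph{from below} in another. Carrying the arithmetic through gives
\[
\sqrt{\frac{V_{\ell_0+1,\ell_0}}{V_0}}
\;<\;
\frac{\sqrt{2(1+\varepsilon)/(1-\varepsilon)}-1}{\sqrt{3(1-\varepsilon)}}
\;=\;
\frac{\sqrt{2(1+\varepsilon)}-\sqrt{1-\varepsilon}}{\sqrt{3}\,(1-\varepsilon)},
\]
whose denominator is $\sqrt{3}\,(1-\varepsilon)=\sqrt{3(1-\varepsilon)^2}$, not $\sqrt{3(1-\varepsilon^2)}$. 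Since $(1-\varepsilon)^2<1-\varepsilon^2$, your right-hand side is strictly larger than that of~\eqref{corollaryeq}; you have derived a valid but \emph{weaker} necessary condition, and the claimed simplification to~\eqref{corollaryeq} is incorrect.

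The fix is to avoid the double bounding of $V_{\ell_0}$. Start from the unnormalized equivalent $\sqrt{2V_{\ell_0+1}}>\sqrt{V_{\ell_0}}+\sqrt{3V_{\ell_0+1,\ell_0}}$ and divide by $\sqrt{V_0}$ (equivalently, multiply both sides of~\eqref{condMLnewcase1} by $\sqrt{V_{\ell_0}/V_0}$). Then $V_{\ell_0+1,\ell_0}/V_0$ appears directly with no bounding needed, and $V_{\ell_0}/V_0$, $V_{\ell_0+1}/V_0$ each appear once; applying $V_{\ell_0}/V_0\ge 1/(1+\varepsilon)$ and $V_{\ell_0+1}/V_0\le 1/(1-\varepsilon)$ yields exactly~\eqref{corollaryeq}. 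This is precisely what the paper does.
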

\begin{proof}
See Appendix~\ref{proofcor}.
\end{proof}
\begin{remark}[Less Strict Necessary Conditions]
If we let \( \varepsilon \to 0 \) in~\eqref{corollaryeq}, corresponding to the case where the single-level variance \( V_{\ell} \) remains constant across levels, then~\eqref{corollaryeq} reduces to
\begin{equation}
\label{condMLnewcase1less}
\frac{V_{\ell_0+1,\ell_0}}{V_0} < \frac{(\sqrt{2} - 1)^2}{3} \approx 0.057.
\end{equation}
When the single-level variance is allowed to vary with the discretization level, as in Proposition~\ref{prop1}, a less stringent necessary condition for the MLMC method to outperform the SLMC method is that some level \( \ell_0 < L_{\mathrm{opt}} \) exists for which~\eqref{condMLnewcase1less} holds. Particularly, if the condition in~\eqref{condMLnewcase1less} is not met, the MLMC method offers no advantage over the SLMC method. Thus, the condition in~\eqref{condMLnewcase1less} serves as a practical preliminary check before verifying the stricter condition in~\eqref{condMLnewcase1}. Moreover, if the single-level variance satisfies
\begin{equation}
\frac{V_{{\ell+1}}}{V_{{\ell}}} < \frac{1}{\sqrt{2}} \quad \text{for all } \ell < L_{\mathrm{opt}},
\end{equation}
then the condition in~\eqref{condMLnewcase1} fails for all choices of \( \ell_0 \), and the MLMC method does not offer any computational benefits. This statement is intuitive. The MLMC method exploits variance reduction across levels, but if the variance already decays rapidly at the single level, the additional overhead of a multilevel structure yields no meaningful gain.
\end{remark}

From Propositions~\ref{prop1}, we conclude that the coarse level \( \ell_0 \) must satisfy certain conditions for the MLMC method to outperform the SLMC method. The first level \( \ell_0 \) for which the condition is satisfied is considered optimal and is denoted by \( \ell_0^{\mathrm{opt}} \). Any level \( \ell_0 > \ell_0^{\mathrm{opt}} \) where the condition also holds is not considered optimal because increasing \( N_{\ell_0} \) further results in unnecessary computation and higher sampling cost. If the condition is not satisfied for any \( \ell_0 < L_{\mathrm{opt}} \), this approach sets \( \ell_0^{\mathrm{opt}} = L_{\mathrm{opt}} \), in which case the MLMC method reduces to the SLMC method. Algorithm~\ref{alg:N0opt} computes the optimal coarse level \( \ell_0^{\mathrm{opt}} =\ell_0^{\mathrm{opt}}( \mathrm{TOL} )\) in the general case where the single-level variance may decay with increasing levels.
\begin{algorithm}
\caption{Determining \( \ell_0^{\mathrm{opt}} \)}
\label{alg:N0opt}
\begin{algorithmic}[1]

\State \textbf{Input:} Initial coarse level \( N_0 \), target tolerance \( \mathrm{TOL} \)

\State Compute \( N_{\mathrm{opt}} \) using~\eqref{eq:Nopt}
\State \( L_{\mathrm{opt}} \gets \log_2 \left( \frac{N_{\mathrm{opt}}}{N_0} \right) \)
\State  \( \ell_0^{\mathrm{opt}} \gets 0\)
\For{ \( \ell = 0 \) to \( L_{\mathrm{opt}}-1 \) }
\If {\(
\sqrt{V_{{\ell+1}} C_{{\ell+1}}} < \sqrt{V_{\ell} C_{\ell}} + \sqrt{V_{\ell+1, \ell} C_{\ell+1, \ell}} 
\) }
    \State \( \ell_0^{\mathrm{opt}} \gets \ell +1\)
\EndIf

\EndFor
\State \textbf{Return:} \( \ell_0^{\mathrm{opt}} \)
\end{algorithmic}
\end{algorithm}
\section{Multilevel Importance Sampling } 
\label{Section 4}
When estimating rare-event probabilities, the standard MLMC method becomes inefficient due to the high variance \( V_{\ell,\ell-1} \) at each level, leading to substantial computational costs. Compared to SLIS, the MLMC method may require significantly more samples because the level variances \( V_{\ell,\ell-1} \) are often much larger than the variance \( V_N^{\mathrm{IS}} \) achieved using SLIS. For example, in the numerical experiment presented in Section~\ref{section num}, the observed values of \( V_{\ell,\ell-1} \) lie in the range \( [10^{-3}, 10^{-5}] \) and are relatively high compared to the much smaller variances, illustrated in Figure~\ref{varianceSLIS}, obtained using SLIS. This study addresses this problem by combining IS with the multilevel estimator, resulting in the MLIS method. The aim is to reduce the variance \( V_{\ell,\ell-1} \) at each level by applying the same change of measure, introduced in Section~\ref{Section 2}, to levels \( \ell \) and \( \ell - 1 \).  

The same complexity analysis for the MLMC method extends to MLIS, with appropriate substitutions. The single-level variance \( V_{\ell} \) is replaced by the IS variance \( V_{\ell}^{\mathrm{IS}} \) (corresponding to \( V_{\ell}^{\mathrm{IS}} \) in~\eqref{varianceSLIS}), and the level-difference variance \( V_{\ell,\ell-1} \) is replaced by the MLIS variance \( V^{\mathrm{IS}}_{\ell,\ell-1} \). Similarly, the cost per sample \( C_{\ell} \) is replaced by \( C_{\ell}^{\mathrm{IS}} \), which represents the cost of generating a single IS sample with \( N_\ell \) time steps, and \( C_{\ell,\ell-1} \) is replaced by the corresponding MLIS cost \( C^{\mathrm{IS}}_{\ell,\ell-1} \). Algorithm~\ref{alg:N0opt} can still be applied to determine the optimal coarse level \( \ell_0^{\mathrm{opt}} \) for MLIS. However, the accuracy of the HJB-PDE \( \epsilon_{\mathrm{PDE}} \) must also be input because the variances in the algorithm depend on the accuracy of the control, which is determined by the precision with which the associated auxiliary HJB-PDE is solved.

The following section proposes an MLIS estimator based on a single-level likelihood, considering two illustrative cases: using nonsmooth functions \( g_w \) and \( f \) and using their smoothed versions. In both cases, the condition in~\eqref{condMLnewcase1less} is not satisfied, and MLIS effectively reduces to SLIS, prompting the use of a {common likelihood} across levels, a novel approach to our knowledge. In the smoothed case, when employing a common likelihood, the condition in~\eqref{condMLnewcase1} is satisfied, revealing a clear improvement of MLIS over SLIS. These examples offer a numerical validation of the conclusions in Proposition~\ref{prop1}.
\subsection{Single-Level Likelihood}
A natural extension of the IS approach from the single-level setting to the multilevel framework is to apply the same single-level change of measure, as introduced in Section~\ref{Section 2}, to both levels \( \ell \) and level \( \ell - 1 \). Such an approach has previously been proposed in the context of combining optimal IS with the MLMC method for McKean--Vlasov SDEs~\cite{ben2025multilevel}. The resulting MLIS estimator is defined as follows:
\begin{equation}
\label{eq:MLIS}
\mathcal{A}_{\mathrm{MLIS}} = \frac{1}{M_0} \sum_{m=1}^{M_0} \tilde{g}_w^{(0,m)} + \sum_{\ell=1}^{L} \frac{1}{M_\ell} \sum_{m=1}^{M_\ell} \left( \tilde{g}_w^{\ell} - \tilde{g}_w^{\ell-1} \right)^{(m)}, \qquad \text{where} \quad  \tilde{g}_w^{\ell} = g_w\left(\tilde{Z}_{N_\ell}^{\ell}\right) L_{N_\ell}.
\end{equation}
Moreover, \( \tilde{Z}_{N_\ell}^{\ell}\) and \( \tilde{Z}_{N_{\ell-1}}^{\ell-1}\) are obtained by solving the following controlled Euler--Maruyama systems at levels \( \ell \) and \( \ell-1 \), respectively:
\begin{align}
\tilde{\boldsymbol{X}}_{n+1}^{\ell} &= \tilde{\boldsymbol{X}}_n^{\ell} + a(t_n^\ell, \tilde{\boldsymbol{X}}_n^{\ell}) \Delta t_\ell + b(t_n^\ell, \tilde{\boldsymbol{X}}_n^{\ell}) \left( \Delta W_n^\ell + \zeta^*(t_n^\ell, \tilde{\boldsymbol{X}}_n^{\ell}, \tilde{Z}_n^{\ell}) \Delta t_\ell \right), \\
\tilde{Z}_{n+1}^{\ell} &= \tilde{Z}_n^{\ell} + f(h(\tilde{\boldsymbol{X}}_n^{\ell})) \Delta t_\ell, \qquad 
\Delta t_\ell = \frac{T}{N_\ell}, \qquad 
t_n^\ell = n \Delta t_\ell, \quad 
n = 0, \dots, N_\ell - 1,
\label{Zldiff}
\end{align}
and
\begin{align}
\tilde{\boldsymbol{X}}_{n+1}^{\ell-1} &= \tilde{\boldsymbol{X}}_n^{\ell-1} + a(t_n^{\ell-1}, \tilde{\boldsymbol{X}}_n^{\ell-1}) \Delta t_{\ell-1} + b(t_n^{\ell-1}, \tilde{\boldsymbol{X}}_n^{\ell-1}) \left( \Delta W_n^{\ell-1} + \zeta^*(t_n^{\ell-1}, \tilde{\boldsymbol{X}}_n^{\ell-1}, \tilde{Z}_n^{\ell-1}) \Delta t_{\ell-1} \right), \\
\tilde{Z}_{n+1}^{\ell-1} &= \tilde{Z}_n^{\ell-1} + f(h(\tilde{\boldsymbol{X}}_n^{\ell-1})) \Delta t_{\ell-1}, \quad 
\Delta t_{\ell-1} = \frac{T}{N_{\ell-1}}, \quad 
t_n^{\ell-1} = n \Delta t_{\ell-1}, \quad 
n = 0, \dots, N_{\ell-1} - 1.
\label{Zl-1diff}
\end{align}
This work defines the coarse-level increment as the sum of two fine-level increments to couple the Brownian increments between levels \( \ell \) and \( \ell - 1 \):
\begin{equation}
\Delta W_n^{\ell-1} = \Delta W_{2n}^{\ell} + \Delta W_{2n+1}^{\ell}, \qquad n = 0, \dots, N_{\ell-1} - 1.
\end{equation}
Finally, the likelihood terms \( L_{N_{\ell}} \) and \( L_{N_{\ell-1}} \) are different, and are defined as
\begin{equation}
\label{likelihood1}
L_{N_{\ell}} = \prod_{n=0}^{N_\ell - 1} \exp \left\{
  -\frac{1}{2} \Delta t_\ell \left\| \zeta^*(t_n^\ell, \tilde{\boldsymbol{X}}_n^{\ell}, \tilde{Z}_n^{\ell}) \right\|^2 
  - \left\langle \Delta W_n^{\ell}, \zeta^*(t_n^\ell, \tilde{\boldsymbol{X}}_n^{\ell}, \tilde{Z}_n^{\ell}) \right\rangle
\right\},
\end{equation}

\begin{equation}
\label{likelihood2}
L_{N_{\ell-1}} = \prod_{n=0}^{N_{\ell-1} - 1} \exp \left\{
  -\frac{1}{2} \Delta t_{\ell-1} \left\| \zeta^*(t_n^{\ell-1}, \tilde{\boldsymbol{X}}_n^{\ell-1}, \tilde{Z}_n^{\ell-1}) \right\|^2 
  - \left\langle \Delta W_n^{\ell-1}, \zeta^*(t_n^{\ell-1}, \tilde{\boldsymbol{X}}_n^{\ell-1}, \tilde{Z}_n^{\ell-1}) \right\rangle
\right\}.
\end{equation}
\subsubsection{Nonsmooth Case}
The same application described in Section~\ref{section num} is considered, where the observable $g_w$ and drift $f$ are nonsmooth functions. This work investigates the performance of the proposed MLIS estimator \eqref{eq:MLIS} by fixing an arbitrary coarse level \( N_0 = 20 \) and computing the variance \( V^{\mathrm{IS}}_{\ell,\ell-1} \) for values of \( \ell \) to assess whether the condition in~\eqref{condMLnewcase1less} can be satisfied. This analysis is repeated for various levels of the auxiliary HJB-PDE accuracy, following the same approach as earlier to study the behavior of \( V_N^{\mathrm{IS}} \) (see Section~\ref{section num}).
\begin{figure}[ht]
\begin{center}
\includegraphics[width=2.4in]{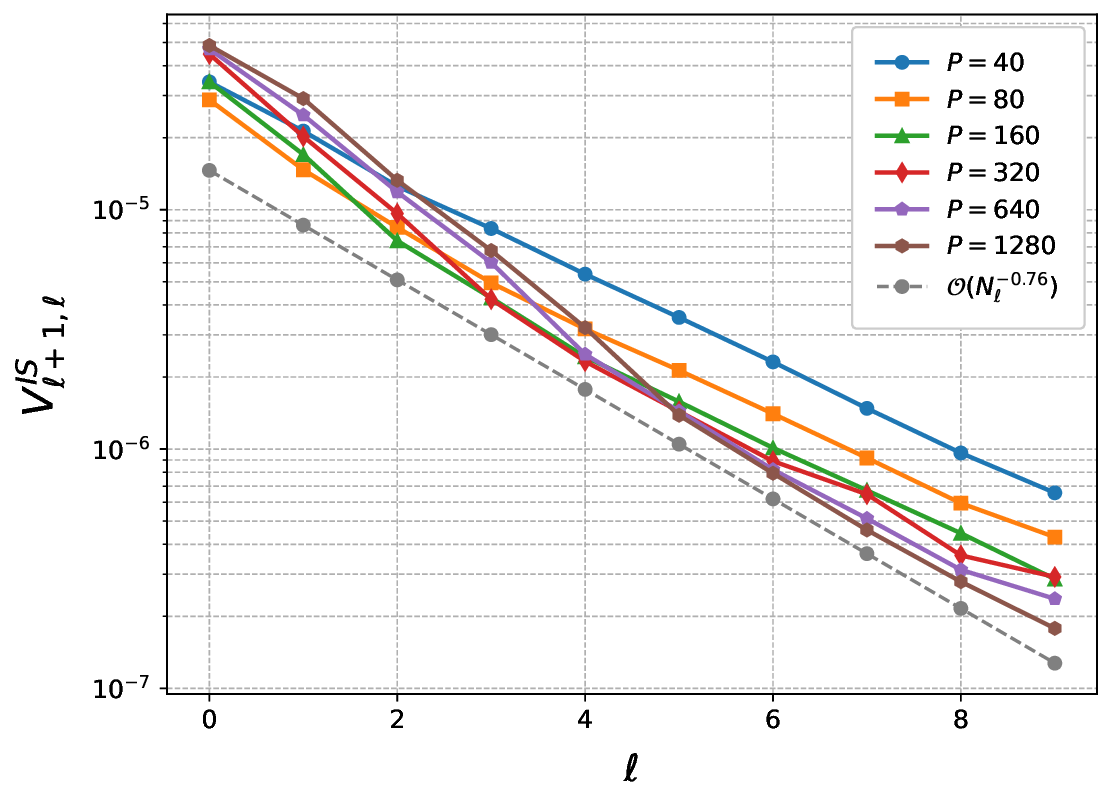}
\hspace{-0.1cm}
\includegraphics[width=2.4in]{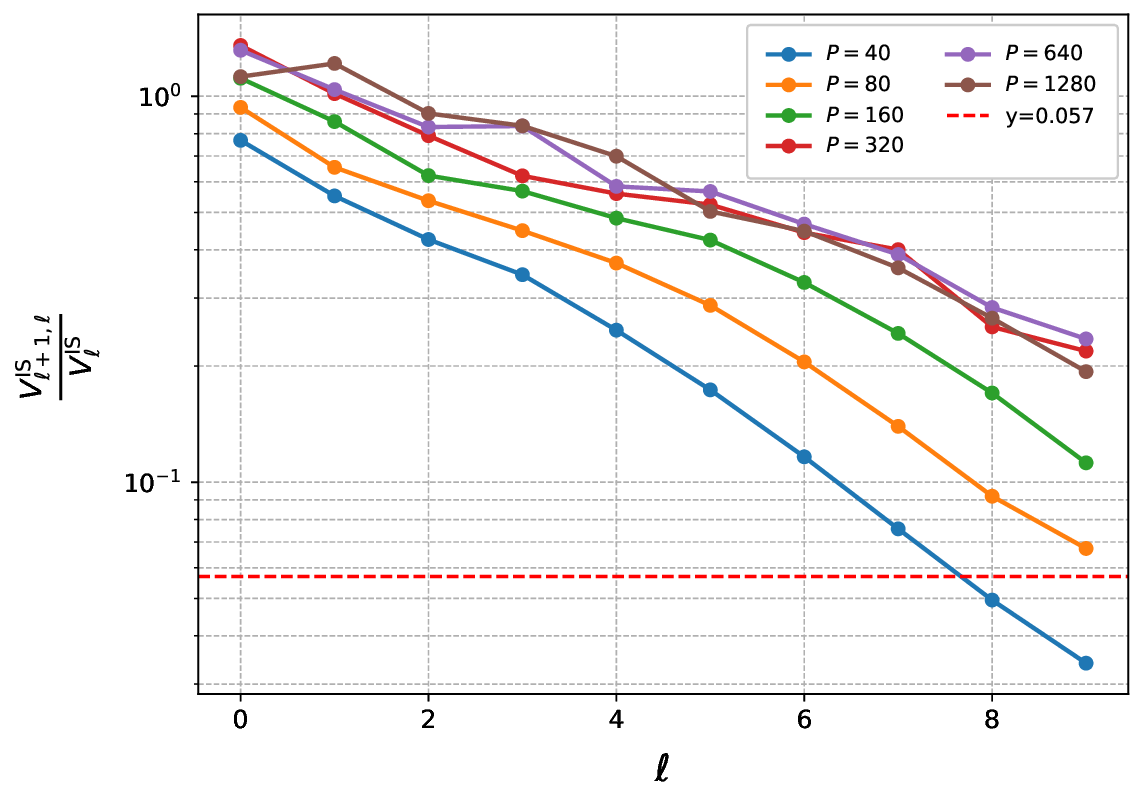}
\caption{Nonsmooth case. Left: Variance \( V^{\mathrm{IS}}_{\ell+1,\ell} \) for various PDE accuracies. 
Right: Ratio \( \frac{V^{\mathrm{IS}}_{\ell+1,\ell}}{V_{{\ell}}^{\mathrm{IS}} }\) for different PDE accuracies.}
\label{example1}
\vspace{-3mm}
\end{center}
\end{figure}

 Figure~\ref{example1} presents the results and reveals that no prospect exists for MLIS to outperform SLIS when \( P\geq 80 \). The only scenario in which MLIS may offer a slight advantage is when \( P = 40 \). Even then, the advantage is only for tolerances requiring numerous levels, specifically, when \( \mathrm{TOL} \leq 0.003 \), corresponding to \( L_{\mathrm{opt}} \geq 8 \).
We compute the sampling work of MLIS using the arbitrary choice \( N_0 = 20 \) to further investigate this observation. Over the range of relative tolerances \( [0.1,0.003] \), SLIS consistently outperforms MLIS for all auxiliary HJB-PDE accuracy values. For a more rigorous comparison, the optimal coarse level \( \ell_0^{\mathrm{opt}} \) was computed using Algorithm~\ref{alg:N0opt}, demonstrating that \( \ell_0^{\mathrm{opt}} = L_{\mathrm{opt}} \) for all considered tolerance values, implying that MLIS reduces to SLIS across the entire tolerance range. 

The inefficiency of MLIS in this application stems from the slow decay of the variance \( V^{\mathrm{IS}}_{\ell,\ell-1} \), which behaves as \( \mathcal{O}(N_\ell^{-0.76}) \), as presented in Figure~\ref{example1}. This limited decay is primarily due to the lack of Lipschitz continuity in the observable \( g_w \) and drift \( f \) functions. A natural method to improve the ratio \( \frac{V^{\mathrm{IS}}_{\ell+1,\ell} }{V_{{\ell}}^{\mathrm{IS}}} \) is to enhance the decay rate of \( V^{\mathrm{IS}}_{\ell+1,\ell} \) by smoothing these functions. The next example explores this smoothing approach.

\subsubsection{Smooth Case}
\label{smooth section}
This section introduces the smooth approximations \( f^d \) and \( g_w^c \) for the functions \( f \) and \( g_w \), respectively, to improve the multilevel variance convergence rates. Rather than estimating the original quantity \( q_w \), the smoothed quantity $q_w^{c,d} =\mathbb{E}\left[g_w^c(Z^d(T))\right]$ is approximated, where $Z^d(T) = \int_0^T f^d\left( h(\boldsymbol{X}(s)) \right) \, ds$. 

Ideally, the smoothing parameters \(c\) and \(d\)
should be chosen as a function of \(\mathrm{TOL}\) in order to retain the effective
unbiasedness of the estimator for \(q_{w}\). The construction of such a
tolerance dependent strategy is developed in Section~\ref{errsplit}. In the following, however, we fix the smoothing parameters to
\(c = 0.5\) and \(d = 0.122\) for the entire range of \(\mathrm{TOL}\). This
choice will be justified in Section~\ref{choicepar}, where it is shown to
yield a negligible smoothing error. Consequently, the analysis of the
computational work of the MLIS method as a function of \(\mathrm{TOL}\) in Figure \ref{works} and \ref{SLvsML} is performed without the additional complication of a tolerance dependent
smoothing error.

The smoothing functions \( f^d \) and \( g_w^c \) are defined as follows:
\begin{equation}
\label{fdgc}
\begin{aligned}
f^d(x) = 1 - 
\begin{cases}
0 & \text{if } x - \gamma_{\text{th}} \geq d, \\
0.5 + \dfrac{x - \gamma_{\text{th}}}{2d} & \text{if } -d < x - \gamma_{\text{th}} < d, \\
1 & \text{if } x - \gamma_{\text{th}} \leq -d,
\end{cases}
\quad
g_w^c(x) =
\begin{cases}
0 & \text{if } x \leq w - c, \\ 0.5 + \dfrac{x - w}{2c} & \text{if } w - c < x < w + c, \\ 1 & \text{if } x \geq w + c.
\end{cases}
\end{aligned}
\end{equation}
As the SDE has constant diffusion, and the drift and observable functions are Lipschitz continuous, the variance of the MLMC level differences \( V_{\ell+1,\ell} \) (without IS) are expected to decay at rate \( \mathcal{O}(N_\ell^{-2}) \), that is, with an order of 2. Figure~\ref{ratesVMLMC} confirms this theoretical rate numerically.

\begin{figure}[ht] 
\begin{center}
\includegraphics[scale = 0.4]{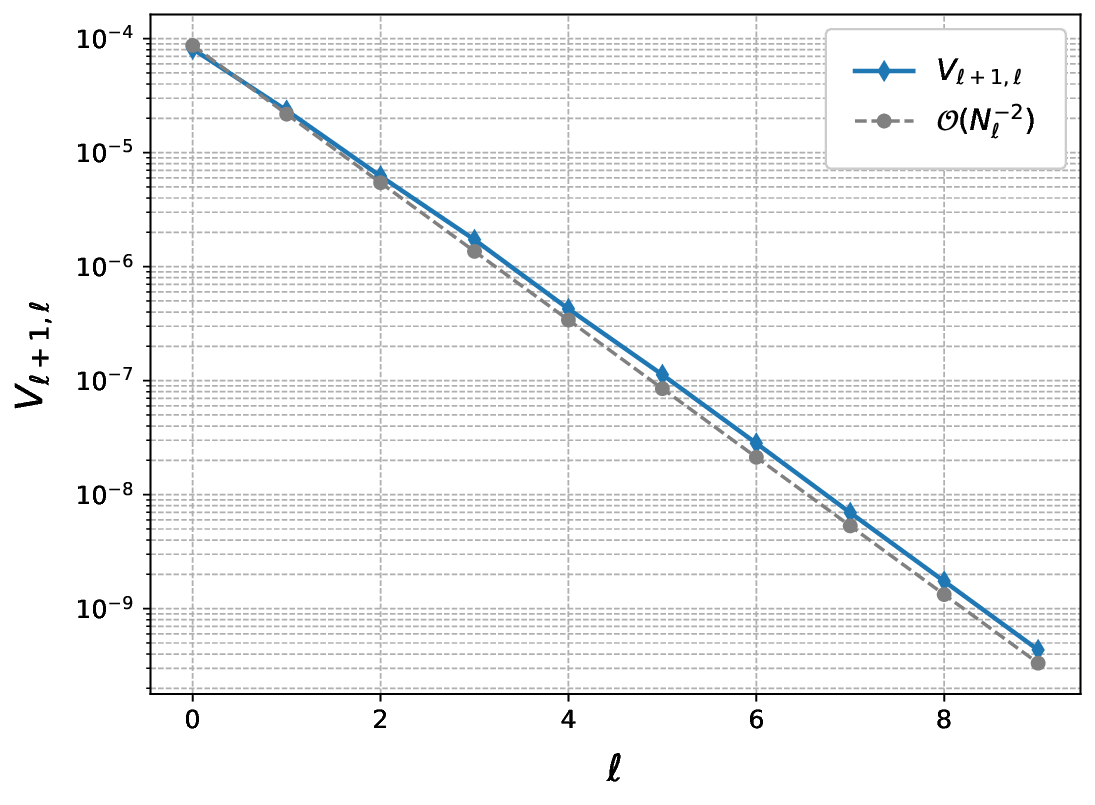} 
\caption{Variance decay of the crude multilevel Monte Carlo (MLMC) method without importance sampling (IS) in the smooth case.}
\label{ratesVMLMC}
\vspace{-3mm}
\end{center}
\end{figure} 
We now consider the smoothed dynamics obtained by replacing $f$ with $f^{d}$ in
\eqref{Zldiff}--\eqref{Zl-1diff}. We denote the corresponding auxiliary
processes by $\tilde{Z}^{\ell,d}$ and $\tilde{Z}^{\ell-1,d}$. In the smoothed
setting, the likelihood weights defined in \eqref{likelihood1}--\eqref{likelihood2}
also depend on the parameters $(c,d)$, since both the control $\zeta^{*}$ and the process $\tilde{Z}$ are replaced 
by their smoothed counterparts. We therefore
write them as $L_{N_\ell}^{c,d}$ and $L_{N_{\ell-1}}^{c,d}$ in what follows. The MLIS estimator~\eqref{eq:MLIS} employs {different likelihoods} for levels~$\ell$ and $\ell - 1$. Therefore, the variance of level differences in MLIS in the smooth case takes the following form:
\begin{equation}
V^{\mathrm{IS}}_{\ell,\ell-1}(c,d)
= \operatorname{Var} \left[ 
g_w^c\left(\tilde{Z}^{\ell,d}_{N_\ell}\right) L_{N_\ell}^{c,d}
- g_w^c\left(\tilde{Z}^{\ell-1,d}_{N_{\ell-1}}\right) L_{N_{\ell-1}}^{c,d}
\right].
\end{equation}
Based on the numerical results in Figure~\ref{example2difflik}, this formulation yields a variance decay rate for MLIS of approximately an order of~1. Thus, the ratio \( \frac{V^{\mathrm{IS}}_{\ell+1,\ell} }{V_{{\ell}}^{\mathrm{IS}} }\) is not significantly reduced. This deterioration in the variance decay rate, compared to the standard MLMC method, is primarily due to different likelihood functions across levels. A natural solution to recover the optimal decay rate (an order of 2) is to apply the same likelihood at both levels. 
\begin{figure}[ht]
\begin{center}
\includegraphics[width=2.4in]{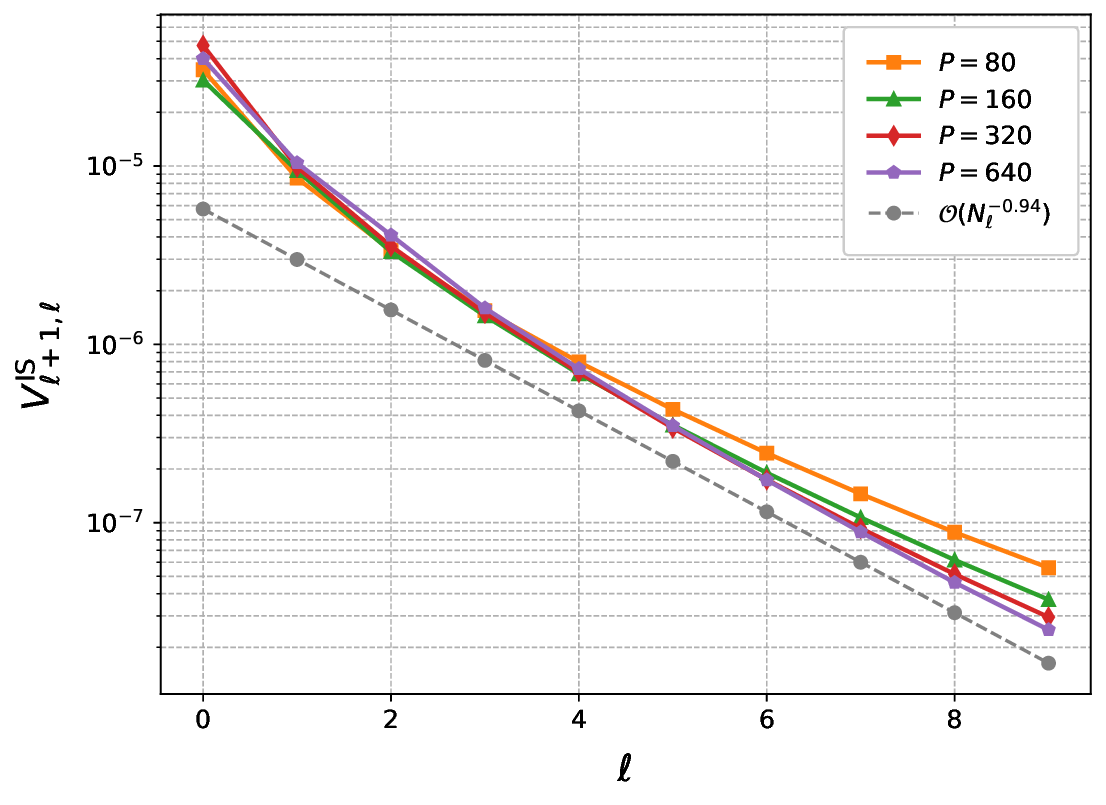}
\hspace{-0.1cm}
\includegraphics[width=2.4in]{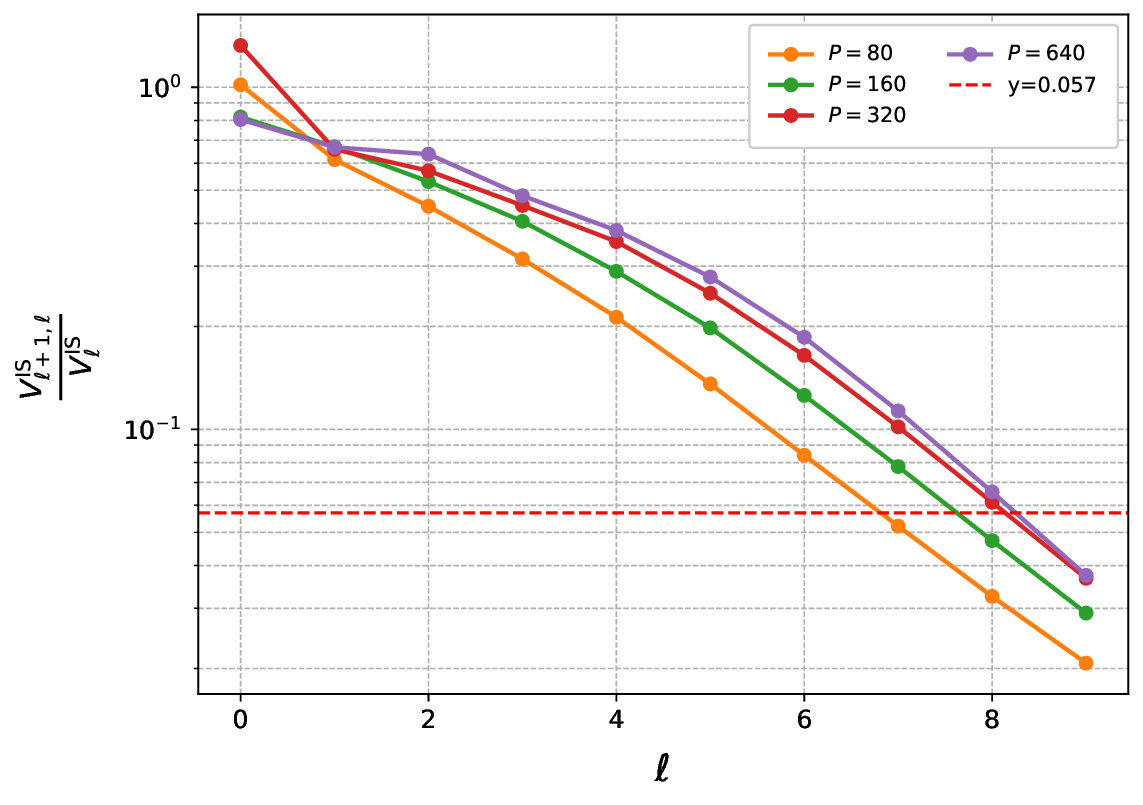}
\caption{Smooth case ($d = 0.122$ and $c = 0.5$) with single-level likelihood. Left: Variance \( V^{\mathrm{IS}}_{\ell+1,\ell} \). Right: Ratio \( \frac{V^{\mathrm{IS}}_{\ell+1,\ell} }{ V_{{\ell}}^{\mathrm{IS}}} \).}
\label{example2difflik}
\vspace{-3mm}
\end{center}
\end{figure}
\subsection{Common Likelihood}
\label{common likelihood}
Next, this work proposes a novel variant of the MLIS framework that slightly departs from the standard approach. Instead of computing the optimal control independently at levels \( \ell \) and \( \ell - 1\), this approach computes the control only at level \( \ell \) and reuses it at level \( \ell - 1\). In other words, the controlled process \( \tilde{Z}_{N_\ell}^{\ell} \) follows the same dynamics as those previously described in~\eqref{Zldiff}, and the coarse path \( \tilde{Z}_{N_{\ell-1}}^{\ell-1} \) is defined using a coupled construction.
The shifted Brownian increment at level \( \ell \) is denoted by \( \Delta \tilde{W}_n^\ell = \Delta W_n^\ell + \zeta^*(t_n^\ell, \tilde{\boldsymbol{X}}_n^\ell, \tilde{Z}_n^\ell)\, \Delta t_\ell \) and is a Brownian increment under a new measure by Girsanov’s theorem~\cite{oksendal2003stochastic}. Then, the coarse-level increment is defined as the sum of two consecutive fine-level increments:\[
\Delta \tilde{W}_n^{\ell-1} = \Delta \tilde{W}_{2n}^{\ell} + \Delta \tilde{W}_{2n+1}^{\ell}, \qquad n = 0, \dots, N_{\ell-1} - 1.\]
Consequently, the coarse-level controlled process \( \tilde{Z}_{N_{\ell-1}}^{\ell-1} \) evolves according to the following:
\begin{align}
\tilde{\boldsymbol{X}}_{n+1}^{\ell-1} &= \tilde{\boldsymbol{X}}_n^{\ell-1} + a(t_n^{\ell-1}, \tilde{\boldsymbol{X}}_n^{\ell-1}) \Delta t_{\ell-1} + b(t_n^{\ell-1}, \tilde{\boldsymbol{X}}_n^{\ell-1}) \left( \Delta W_n^{\ell-1} + \tilde{\zeta}(t_n^{\ell-1}, \tilde{\boldsymbol{X}}_n^{\ell-1}, \tilde{Z}_n^{\ell-1}) \Delta t_{\ell-1} \right), \\
\tilde{Z}_{n+1}^{\ell-1} &= \tilde{Z}_n^{\ell-1} + f(h(\tilde{\boldsymbol{X}}_n^{\ell-1})) \Delta t_{\ell-1}, \quad 
\Delta t_{\ell-1} = \frac{T}{N_{\ell-1}}, \quad 
t_n^{\ell-1} = n \Delta t_{\ell-1},
\end{align}
where the coarse-level control is defined by
\begin{equation}
\tilde{\zeta}(t_n^{\ell-1}, \tilde{\boldsymbol{X}}_n^{\ell-1}, \tilde{Z}_n^{\ell-1}) = \frac{1}{2} \left[ \zeta^*(t_{2n}^{\ell}, \tilde{\boldsymbol{X}}_{2n}^{\ell}, \tilde{Z}_{2n}^{\ell}) + \zeta^*\left(t_{2n+1}^{\ell}, \tilde{\boldsymbol{X}}_{2n+1}^{\ell}, \tilde{Z}_{2n+1}^{\ell}\right) \right], \quad n = 0, \dots, N_{\ell-1} - 1.
\end{equation}
Unlike the single-level likelihood approach, where the fine and coarse paths are coupled prior to the change of measure, the present method performs the coupling after the change of measure. This strategy ensures that the same likelihood \( L_{N_\ell} \) is employed for levels \( \ell \) and \( \ell-1\). Under this construction, the variance of the multilevel difference in the smooth case becomes the following:
\begin{equation}
\tilde{V}^{\mathrm{IS}}_{\ell,\ell-1} (c,d)= \operatorname{Var} \left[ \left( g_w^c\left(\tilde{Z}_{N_\ell}^{\ell,d}\right) - g_w^c\left(\tilde{Z}_{N_{\ell-1}}^{\ell-1,d}\right) \right) L_{N_\ell}^{c,d} \right].
\end{equation}
This variance is expected to decay at the same rate (rate = 2) as the standard MLMC variance.
\begin{figure}[ht]
\begin{center}
\includegraphics[width=2.4in]{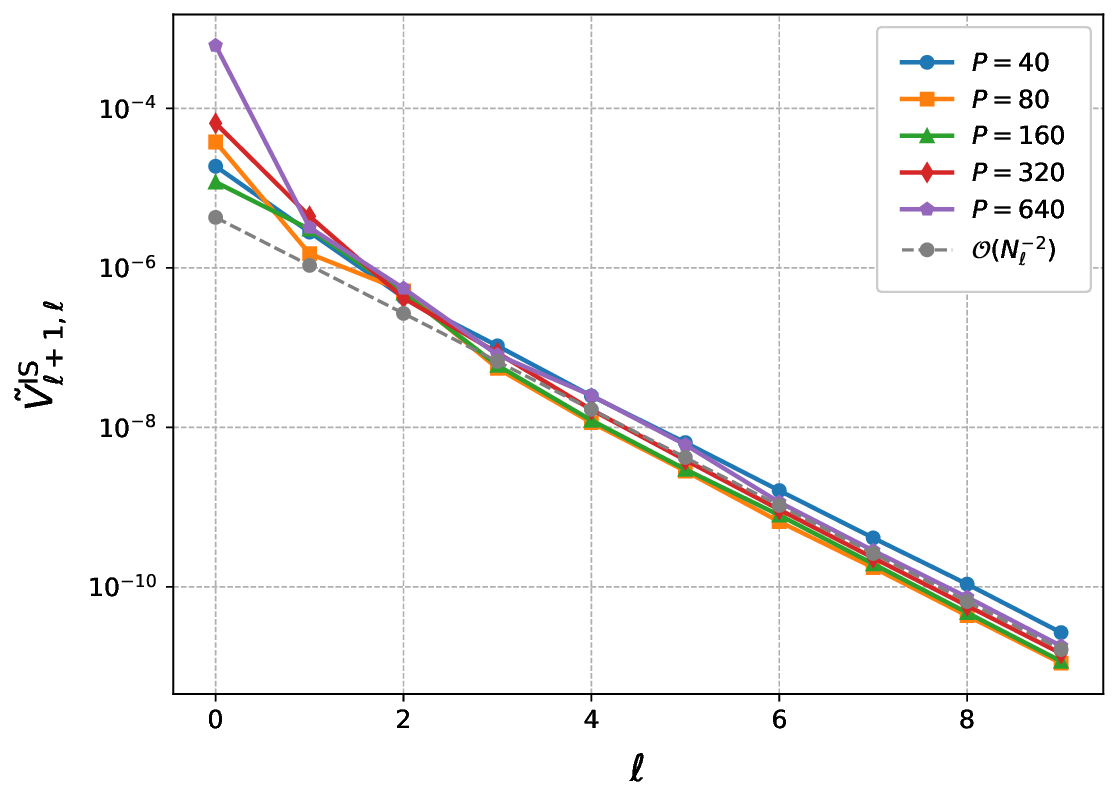}
\hspace{-0.1cm}
\includegraphics[width=2.4in]{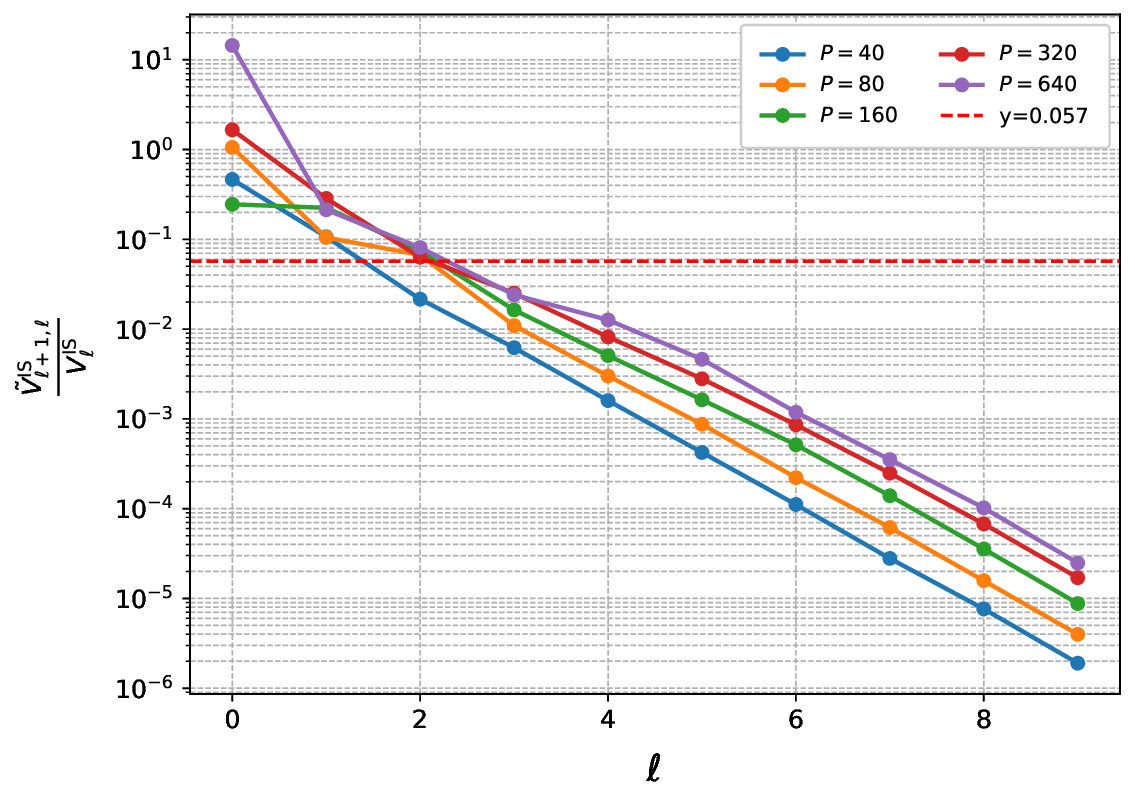}
\caption{Smooth case ($d = 0.122$ and $c = 0.5$) using the common likelihood. Left: Variance \( \tilde{V}^{\mathrm{IS}}_{\ell+1,\ell} \). Right: Ratio \( \frac{\tilde{V}^{\mathrm{IS}}_{\ell+1,\ell} }{ V_{{\ell}}^{\mathrm{IS}} }\).}
\label{example2samelik}
\vspace{-10mm}
\end{center}
\end{figure}
\begin{figure}[ht] 
\begin{center}
\vspace{-3mm}
\includegraphics[scale = 0.55]{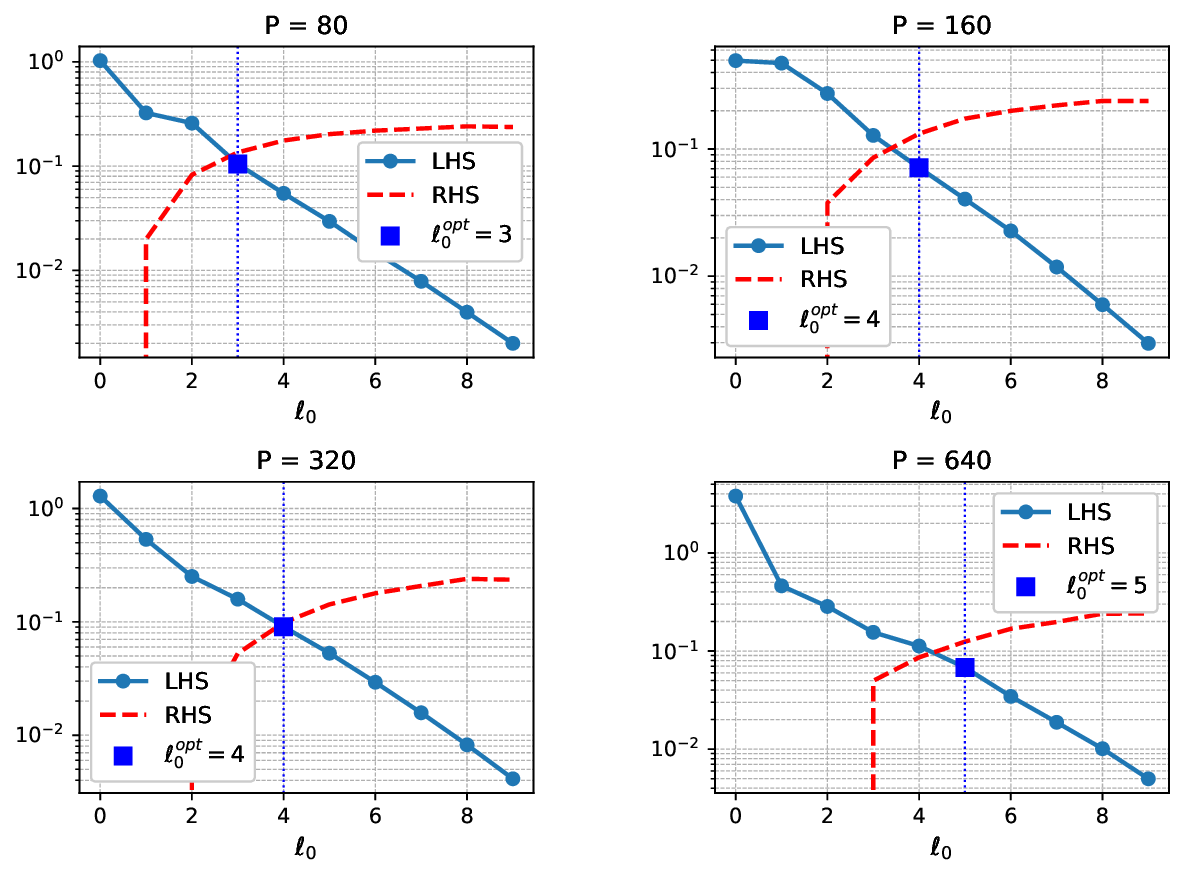} 
\caption{Left-hand side $\mathrm{(LHS)}=\sqrt{\frac{\tilde{V}^{\mathrm{IS}}_{\ell_0+1,\ell_0}}{\tilde{V}^{\mathrm{IS}}_{{\ell_0}}}}$ and right-hand side $\mathrm{(RHS)}=\frac{\sqrt{2\frac{\tilde{V}^{\mathrm{IS}}_{{\ell_0+1}}}{\tilde{V}^{\mathrm{IS}}_{{\ell_0}}}} - 1}{\sqrt{3}} $.}
\label{example2subplot}
\end{center}
\end{figure} 
Figure~\ref{example2samelik} plots \( \tilde{V}^{\mathrm{IS}}_{\ell+1,\ell} \) along with the ratio \( \frac{\tilde{V}^{\mathrm{IS}}_{\ell+1,\ell} }{ V_{{\ell}}^{\mathrm{IS}}} \). When using a common likelihood, the decay rate of \( \tilde{V}^{\mathrm{IS}}_{\ell+1,\ell} \) improves to an order of~2, and the necessary condition~\eqref{condMLnewcase1less} is satisfied for smaller values of~$\ell_0$ compared with the single-level likelihood case. This result suggests that MLIS can outperform SLIS for reasonably small tolerances.

The condition \eqref{condMLnewcase1less} is necessary but not sufficient; hence, this work proceeds to verify the condition in~\eqref{condMLnewcase1} to determine whether the improved variance decay rate is sufficient for MLIS to outperform SLIS. Figure~\ref{example2subplot} plots both sides of the inequality~\eqref{condMLnewcase1} to identify the values of $\ell_0$ for which the condition is satisfied. 
The sufficient condition is satisfied for the following:
\begin{itemize}
    \item \( \ell_0 \geq 3 \) for \( P = 80 \),
    \item \( \ell_0 \geq 4 \) for \( P = 160 \) and \(320 \),
    \item \( \ell_0 \geq 5 \) for \( P = 640 \).
\end{itemize}
Let \( \mathrm{TOL}_{\text{lim}}^{(P)} \) denote the minimum tolerance below which MLIS is expected to outperform SLIS for a given HJB--PDE resolution \( P \). This threshold is derived from the corresponding \( L_{\mathrm{opt}} \), computed using \eqref{lopt} and~\eqref{eq:Nopt}. As MLIS can outperform SLIS only if $\ell_0<L_{\mathrm{opt}}$, the minimum tolerances should satisfy the following constraints:
\begin{itemize}
    \item \( \mathrm{TOL}_{\text{lim}}^{(80)} = 0.03 \) corresponding to \( L_{\mathrm{opt}} = 4 \),
    \item \( \mathrm{TOL}_{\text{lim}}^{(160)} = \mathrm{TOL}_{\text{lim}}^{(320)} = 0.016 \), corresponding to \( L_{\mathrm{opt}} = 5 \),
    \item \( \mathrm{TOL}_{\text{lim}}^{(640)} = 0.008 \) corresponding to \( L_{\mathrm{opt}} = 6 \).
\end{itemize}
These results were numerically verified by plotting the sampling work of SLIS and MLIS in the smooth case for various auxiliary HJB-PDE accuracy values in Figure~\ref{works}. As expected, MLIS requires less sampling work than SLIS for tolerances below the corresponding threshold \( \mathrm{TOL}_{\text{lim}}^{(P)} \) for each auxiliary HJB-PDE resolution. These results confirm Proposition~\ref{prop1}: the necessary and sufficient condition in~\eqref{condMLnewcase1} accurately predicts when MLIS outperforms SLIS. For each auxiliary HJB-PDE accuracy value, MLIS becomes more efficient once the tolerance drops below \( \mathrm{TOL}_{\text{lim}}^{(P)} \).
\begin{figure}[ht]
\begin{center}
\includegraphics[scale = 0.55]{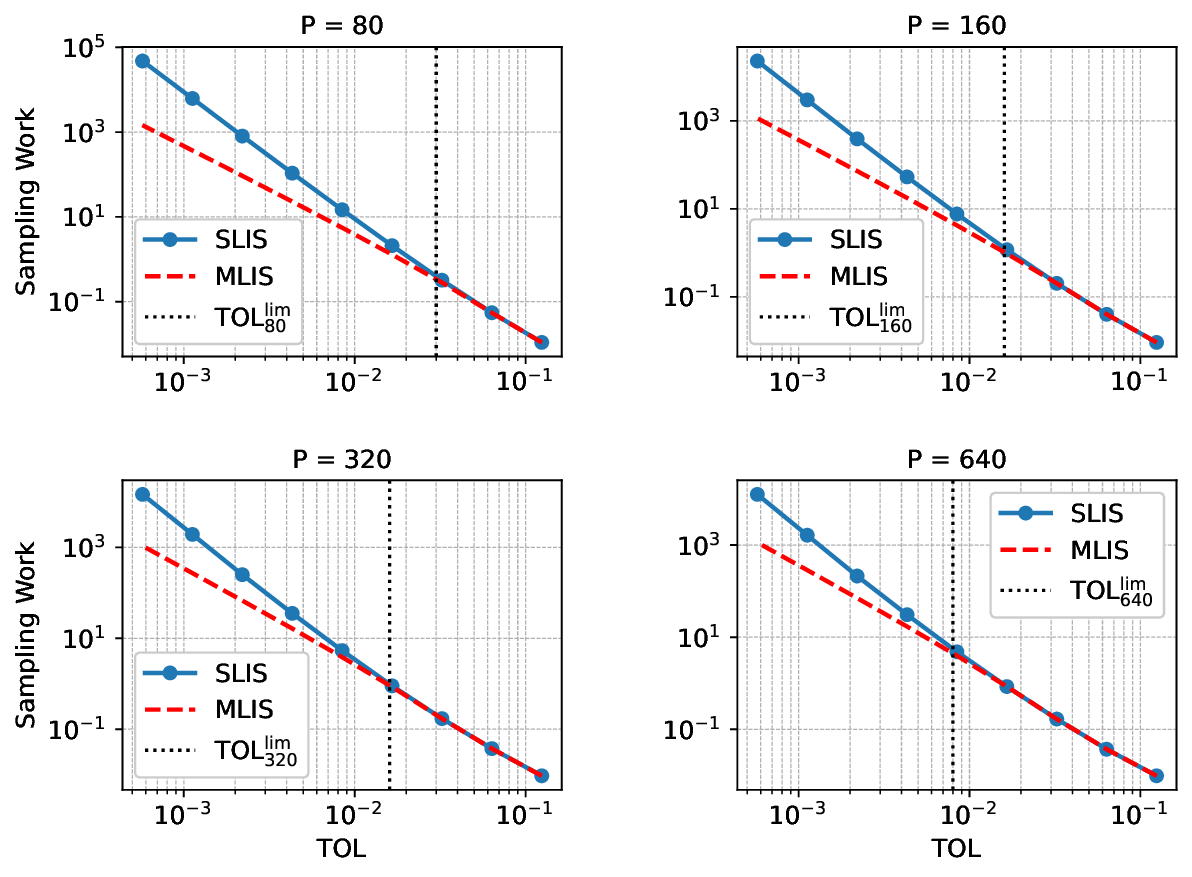}
\hspace{-0.1cm}
\caption{Sampling work for single-level importance sampling (SLIS) vs. multilevel importance sampling (MLIS) by auxiliary HJB-PDE accuracy.}
\label{works}
\vspace{-3mm}
\end{center}
\end{figure}

Notably, the work in Figure~\ref{works} reflects only the sampling cost. In this experiment, the auxiliary HJB-PDE accuracy was fixed across all tolerances, and the sampling work was analyzed as a function of \(\mathrm{TOL}\). However, as demonstrated in Section~\ref{section auxiliary HJB}, accounting for the auxiliary HJB-PDE accuracy is crucial in the single-level case, where an optimal auxiliary HJB-PDE tolerance exists for each \(\mathrm{TOL}\). A similar investigation is necessary in the multilevel setting, where the total computational cost considering the auxiliary HJB-PDE and sampling costs should be assessed. The next section addresses this full work analysis.

\section{Incorporating Auxiliary HJB-PDE Accuracy in MLIS Efficiency}
\label{Section 5}
\subsection{MLIS Optimal Work}
\label{NumEx}
The total computational cost, including sampling and auxiliary HJB-PDE costs, must be considered to evaluate the true efficiency of MLIS compared to SLIS. For MLIS, this work assumes that all levels share the same auxiliary HJB-PDE accuracy, denoted by \( \epsilon_{\mathrm{PDE}} \). The total work is defined as follows:
\begin{equation}
\label{totalWorkMLIS}
\text{Work}_{\mathrm{MLIS}}(\epsilon_{\mathrm{PDE}}) = \text{Work}_\text{sampling}^{\mathrm{MLIS}}(\epsilon_{\mathrm{PDE}}) + \text{Work}_\text{PDE}(\epsilon_{\mathrm{PDE}}),
\end{equation}
where 
\begin{equation}
        \label{MLISwork}
        \text{Work}_\text{sampling}^{\mathrm{MLIS}} = \left( \frac{2 C}{q_w \, \mathrm{TOL}} \right)^2
\left( \sqrt{V_{ {\ell_0^\mathrm{opt}} }^{\mathrm{IS}}C_{{\ell_0^\mathrm{opt}}}^{\mathrm{IS}}} 
        + \sum_{\ell = \ell_0^{\mathrm{opt}} + 1}^{L_{\mathrm{opt}}} \sqrt{\tilde{V}^{\mathrm{IS}}_{\ell,\ell-1} C^{\mathrm{IS}}_{\ell,\ell-1}} \right)^2.
 \end{equation}
The optimal auxiliary HJB-PDE accuracy was selected to minimize this total cost. In the comparison strategy, the total computational costs for SLIS and MLIS are optimized independently over the auxiliary HJB-PDE accuracy \( \epsilon_{\mathrm{PDE}} \), and their respective optimal configurations are compared. This approach ensures a fair and consistent assessment of the overall efficiency of both methods.

For a fixed relative tolerance, the optimal auxiliary HJB-PDE accuracy values for SLIS and MLIS may differ because the two methods balance auxiliary HJB-PDE and sampling costs differently. In SLIS, reducing the auxiliary HJB-PDE error directly improves the variance of the estimator. In contrast, in MLIS, the variance decays across levels, and the multilevel hierarchy introduces additional trade-offs. Algorithm~\ref{algMLIS} is proposed to determine the optimal MLIS work for a given relative tolerance $\mathrm{TOL} $.

As a numerical example to evaluate the effectiveness of MLIS in terms of the total computational cost compared to SLIS, this work considers the smooth setting for the observable and drift functions, employing the common likelihood formulation. This approach corresponds to the setting described in Section~\ref{common likelihood}, where MLIS can outperform SLIS across auxiliary HJB-PDE resolutions \( P \).

Algorithm~\ref{algMLIS} was applied to compute the optimal auxiliary HJB-PDE accuracy and the corresponding total work of MLIS, compared with the optimized total work of SLIS, computed using Algorithm~\ref{algSLIS}. Figure~\ref{SLvsML} displays the results, where the black lines represent the total work, dashed lines indicate SLIS, and solid lines represent MLIS. Even accounting for the cost of solving the auxiliary HJB-PDE, MLIS still demonstrates lower overall computational work than SLIS in this example.
\begin{figure}[ht]
\begin{center}
\includegraphics[scale = 0.4]{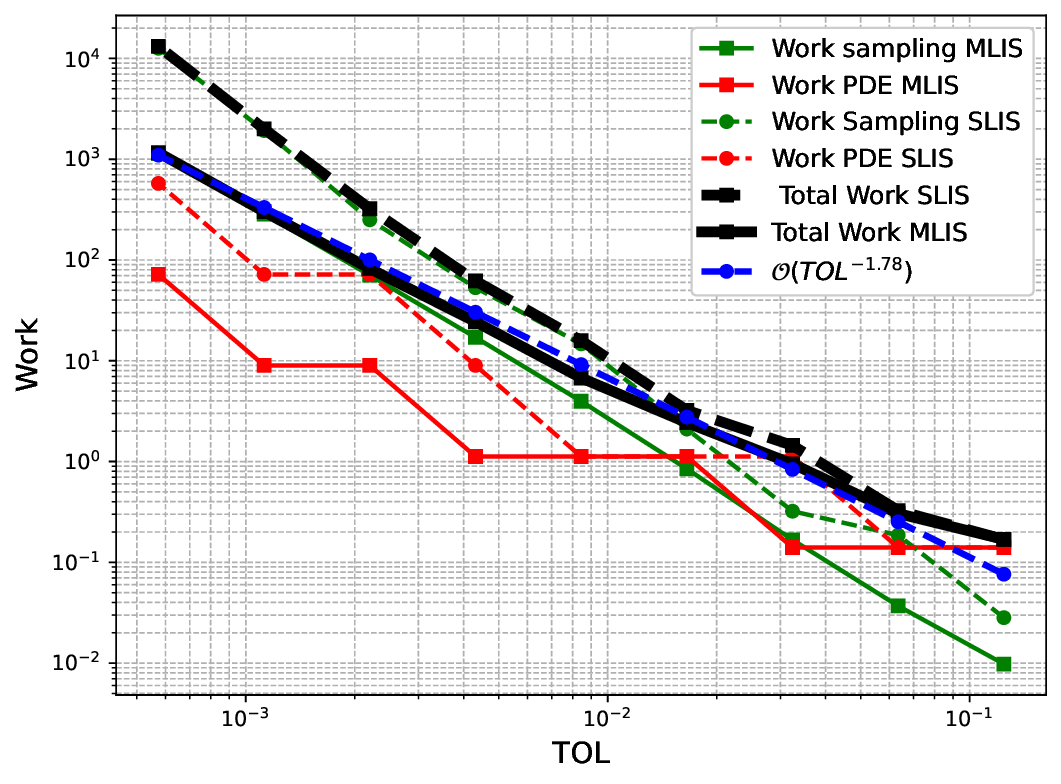}
\hspace{-0.1cm}
\caption{Optimized total work of single-level importance sampling (SLIS) vs. multilevel importance sampling (MLIS).}
\label{SLvsML}
\vspace{-3mm}
\end{center}
\end{figure}

In Figure~\ref{SLvsML}, the sampling and total work of MLIS exhibits a convergence rate of approximately \( 1.78 \), notably lower than the typical minimal rate of \( 2 \) observed in the standard MLMC method, where the single-level variance remains constant across levels. This observation indicates the need for a deeper investigation into the behavior of MLIS when the single-level variance decays with the level, as explored in the following section.
\begin{algorithm}[ht]
\caption{Optimization of the total work for multilevel importance sampling}
\label{algMLIS}
\begin{algorithmic}[1]
\State \textbf{Input:}  Target tolerance \( \mathrm{TOL} \), initial coarse level \( N_0 \), initial auxiliary HJB-PDE accuracy \( \epsilon_{\mathrm{PDE}}^0 \)
\State Compute \( N_{\mathrm{opt}} \) using~\eqref{eq:Nopt}
    \State Compute $L_{\mathrm{opt}}=\log_2 \left( \frac{N_{\mathrm{opt}}}{N_0} \right)$
\State Set \( \text{Work}_{\mathrm{MLIS}}^{\mathrm{opt}} \gets \infty \) and \( \epsilon_{\mathrm{PDE}}\gets \epsilon_{\mathrm{PDE}}^0 \)
\While{true}
  \State Compute the variance \( V_{L_{\mathrm{opt}}}^{\mathrm{IS}}(\epsilon_{\mathrm{PDE}}) \)
  \State Compute $\text{Work}_\text{sampling}^{\text{IS}}$ using \eqref{samplingwork}
    \State Compute $\ell_0^{\mathrm{opt}} (\epsilon_{\mathrm{PDE}} )$ using Algorithm~\ref{alg:N0opt}
    \If{$\ell_0^{\mathrm{opt}}=L_{\mathrm{opt}}$}
        \State Set $\text{Work}_\text{sampling}^{\mathrm{MLIS}}\gets \text{Work}_\text{sampling}^{\mathrm{IS}}$
    \Else
        \State Compute $V_{ {\ell_0^\mathrm{opt}} }^{\mathrm{IS}}(\epsilon_{\mathrm{PDE}})$
             \State Compute $\text{Work}_\text{sampling}^{\mathrm{MLIS}}$ using \eqref{MLISwork}
    \EndIf
    \State Compute  $\text{Work}_{\mathrm{PDE}}(\epsilon_{\mathrm{PDE}})$
    \State Compute total work $\text{Work}_{\mathrm{MLIS}}(\epsilon_{\mathrm{PDE}})$ using \eqref{totalWorkMLIS}
    \If{$\mathrm{Work}_{\mathrm{MLIS}} < \text{Work}_{\mathrm{MLIS}}^{\mathrm{opt}}$}
  \State \( \text{Work}_{\mathrm{MLIS}}^{\mathrm{opt}} \gets \text{Work}_{\mathrm{MLIS}} \)
        \State \( \epsilon_{\mathrm{PDE}}^{\mathrm{opt}} \gets \epsilon_{\mathrm{PDE}} \)
        \State Refine auxiliary HJB-PDE by reducing \( \epsilon_{\mathrm{PDE}} \)
    \Else
        \State \textbf{break}
    \EndIf
\EndWhile
\State \textbf{Output:} \( \epsilon_{\mathrm{PDE}}^{\mathrm{opt}}, \text{Work}_{\mathrm{MLIS}}^{\mathrm{opt}} \)
\end{algorithmic}
\end{algorithm}
\subsection{MLIS Work Rate in Decaying Single-Level Variance}
Motivated by the investigation of MLIS in settings where the single-level variance decays, this work recalls the classical complexity theorem of MLMC estimators.
\begin{theorem}[Work rate of the MLMC estimator under constant single-level variance] 
\label{thm1}
Let \( \mathrm{TOL} \) denote a prescribed target tolerance. 
Suppose that the single-level variance remains constant across levels and that positive constants \( \alpha, \beta, \gamma \) exist such that
\begin{itemize}
  \item \( |\mathbb{E}[g_w^{\ell} - q_w]| = \mathcal{O}(2^{-\alpha \ell}) \),
  \item \( V_{\ell,\ell-1} = \mathcal{O}(2^{-\beta \ell}) \), and
  \item \( C_{\ell,\ell-1} = \mathcal{O}(2^{\gamma \ell}) \).
\end{itemize}
Then, the sampling work of the MLMC method satisfies the following \cite{giles2015multilevel}:
\[\text{Work}_\text{sampling}^{\text{MLMC}}=
\begin{cases}
\mathcal{O}(\mathrm{TOL}^{-2}), & \beta > \gamma, \\
\mathcal{O}(\mathrm{TOL}^{-2} \log^2(\mathrm{TOL})), & \beta = \gamma, \\
\mathcal{O}(\mathrm{TOL}^{-2 - (\gamma - \beta)/\alpha}), & \beta < \gamma.
\end{cases}\]
\end{theorem}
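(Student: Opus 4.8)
The plan is to reduce the statement to a constrained optimization over the per-level sample counts $M_\ell$, then bound the resulting work by balancing the bias constraint (which fixes $L_{\mathrm{opt}}$) against the geometric sums that appear in $\text{Work}_\text{sampling}^{\text{MLMC}}$. First I would recall from the MLMC framework in Section~\ref{Section 3} that, once $L_{\mathrm{opt}}$ is chosen so the bias is below $\mathrm{TOL}/2$, the optimal sample allocation gives
\[
\text{Work}_\text{sampling}^{\text{MLMC}} = \left(\frac{2C}{q_w\,\mathrm{TOL}}\right)^2 \left(\sqrt{V_0 C_0} + \sum_{\ell=1}^{L_{\mathrm{opt}}}\sqrt{V_{\ell,\ell-1}C_{\ell,\ell-1}}\right)^2.
\]
Since the single-level variance is assumed constant across levels, $V_0$ is $\mathcal{O}(1)$; and since the bias is $\mathcal{O}(2^{-\alpha\ell})$, the requirement $2^{-\alpha L_{\mathrm{opt}}} \lesssim \mathrm{TOL}$ forces $L_{\mathrm{opt}} = \tfrac{1}{\alpha}\log_2(\mathrm{TOL}^{-1}) + \mathcal{O}(1)$, hence $2^{L_{\mathrm{opt}}} \lesssim \mathrm{TOL}^{-1/\alpha}$.

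The core of the argument is then estimating the sum $S := \sum_{\ell=1}^{L_{\mathrm{opt}}}\sqrt{V_{\ell,\ell-1}C_{\ell,\ell-1}}$. Plugging in the hypotheses $V_{\ell,\ell-1} = \mathcal{O}(2^{-\beta\ell})$ and $C_{\ell,\ell-1} = \mathcal{O}(2^{\gamma\ell})$ yields $\sqrt{V_{\ell,\ell-1}C_{\ell,\ell-1}} = \mathcal{O}(2^{(\gamma-\beta)\ell/2})$, a geometric sum with ratio $2^{(\gamma-\beta)/2}$. I would split into the three cases according to the sign of $\gamma - \beta$: when $\beta > \gamma$ the series converges and $S = \mathcal{O}(1)$; when $\beta = \gamma$ each term is $\mathcal{O}(1)$ and $S = \mathcal{O}(L_{\mathrm{opt}}) = \mathcal{O}(\log(\mathrm{TOL}^{-1}))$; when $\beta < \gamma$ the sum is dominated by its last term, $S = \mathcal{O}(2^{(\gamma-\beta)L_{\mathrm{opt}}/2}) = \mathcal{O}(\mathrm{TOL}^{-(\gamma-\beta)/(2\alpha)})$. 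Squaring $(\sqrt{V_0C_0}+S)$ and multiplying by the prefactor $\mathcal{O}(\mathrm{TOL}^{-2})$ produces $\mathcal{O}(\mathrm{TOL}^{-2})$, $\mathcal{O}(\mathrm{TOL}^{-2}\log^2(\mathrm{TOL}))$, and $\mathcal{O}(\mathrm{TOL}^{-2-(\gamma-\beta)/\alpha})$ respectively, which is exactly the claimed trichotomy.

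The main obstacle, and the only place requiring care, is bookkeeping the $\mathcal{O}(1)$ additive and multiplicative slack hidden in $L_{\mathrm{opt}}$ and in the constants of the three asymptotic hypotheses, so that the geometric-sum estimates remain uniform in $\mathrm{TOL}$ as $\mathrm{TOL}\to 0$; in particular one must check that the lower-order $\sqrt{V_0C_0}$ term never changes the exponent (it does not, since $V_0C_0 = \mathcal{O}(1)$ in all three cases and is absorbed into $S$ except in the $\beta>\gamma$ regime where it simply contributes to the $\mathcal{O}(1)$ bound). Since this is the classical Giles complexity theorem specialized to the relative-error normalization used here, I would simply cite~\cite{giles2015multilevel} for the detailed constant tracking and present the case analysis above as the proof sketch.
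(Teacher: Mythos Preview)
Your proposal is correct and matches the paper's treatment: the paper does not give its own proof of this theorem but simply recalls it with a citation to~\cite{giles2015multilevel}, and the geometric-sum case analysis you sketch is exactly the argument used there (and the same technique the paper itself deploys in the proof of Proposition~\ref{prop:MLISrate}, its extension to decaying single-level variance).
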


In the application in Section~\ref{NumEx}, we have \( \alpha = 1 \), \( \beta = 2 \), and \( \gamma = 1 \). 
Under the assumptions of Theorem~\ref{thm1}, this finding implies an optimal computational complexity of order \( \mathcal{O}(\mathrm{TOL}^{-2}) \). However, as observed in Figure~\ref{SLvsML}, MLIS achieves a lower effective work rate of about \( \mathcal{O}(\mathrm{TOL}^{-1.78}) \), which raises the question: can the MLIS method outperform the classical MLMC complexity rate when the single-level variance decays with the level? Proposition~\ref{prop:MLISrate} explores this question by studying MLIS under conditions where the single-level variance decays with an increasing \( \ell \), and this work analyzes the resulting influence on the sampling work rate.
\begin{proposition}[Work rate of MLIS in decaying single-level variance]
\label{prop:MLISrate}
Let $\mathrm{TOL}$ denote the target accuracy. This work makes the following assumptions:
\begin{itemize}
  \item \( |\mathbb{E}[g_w^{\ell} - q_w]| = \mathcal{O}(2^{-\alpha \ell}) \),
  \item \( V^{\mathrm{IS}}_{\ell,\ell-1} = \mathcal{O}(2^{-\beta \ell}) \),
  \item \( C^{\mathrm{IS}}_{\ell,\ell-1} = \mathcal{O}(2^{\gamma \ell}) \),
    \item The single-level variance and cost at the coarse level $\ell_0(\mathrm{TOL})$ scale as follows:
    \[V^{\mathrm{IS}}_{{\ell_0}}= \mathcal{O}(\mathrm{TOL}^{v_0}), \qquad C^{\mathrm{IS}}_{{\ell_0}}= \mathcal{O}(\mathrm{TOL}^{-c_0}),\]
    where $0 \leq c_0 \leq \frac{1}{\alpha}$ and $v_0 \geq 0$.
\end{itemize}
Thus, the sampling work of MLIS satisfies the following:
\begin{itemize}
  \item If $\gamma < \beta$, 
  \[\text{Work}^{\mathrm{MLIS}}_{\mathrm{sampling}}
  = \mathcal{O}\bigl(\mathrm{TOL}^{-2+H}\bigr),
  \qquad
  H = \min\bigl(v_0 - c_0, \, (\beta-\gamma)c_0 \bigr).\]
  \item If $\gamma = \beta$,
  \[\text{Work}^{\mathrm{MLIS}}_{\mathrm{sampling}} =
  \begin{cases}
    \mathcal{O}\bigl(\mathrm{TOL}^{-2} \log^2 (\mathrm{TOL}) \bigr), & v_0-c_0 \geq 0, \\[6pt]
    \mathcal{O}\bigl(\mathrm{TOL}^{-2 + v_0-c_0}\bigr), & v_0-c_0 < 0.
  \end{cases}\]
  \item If $\gamma > \beta$, 
  \[\text{Work}^{\mathrm{MLIS}}_{\mathrm{sampling}}
  = \mathcal{O}\bigl(\mathrm{TOL}^{-2-H}\bigr),
  \qquad
  H = \max\bigl(c_0-v_0, \, \tfrac{\gamma-\beta}{\alpha} \bigr).\]
\end{itemize}
\end{proposition}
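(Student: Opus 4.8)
The plan is to adapt the proof of the classical MLMC complexity theorem (Theorem~\ref{thm1}) to account for the fact that the coarse-level variance $V^{\mathrm{IS}}_{\ell_0}$ and cost $C^{\mathrm{IS}}_{\ell_0}$ are no longer $\mathcal{O}(1)$ but instead scale as powers of $\mathrm{TOL}$. First I would recall from~\eqref{samplingworkMLMC} (with the IS substitutions) that
\[
\text{Work}^{\mathrm{MLIS}}_{\mathrm{sampling}}
= \left(\frac{2C}{q_w\,\mathrm{TOL}}\right)^2
\left(\sqrt{V^{\mathrm{IS}}_{\ell_0}C^{\mathrm{IS}}_{\ell_0}}
+ \sum_{\ell=\ell_0+1}^{L_{\mathrm{opt}}}\sqrt{\tilde V^{\mathrm{IS}}_{\ell,\ell-1}C^{\mathrm{IS}}_{\ell,\ell-1}}\right)^2,
\]
so the whole problem reduces to estimating the bracketed sum $S := \sqrt{V^{\mathrm{IS}}_{\ell_0}C^{\mathrm{IS}}_{\ell_0}} + \sum_{\ell=\ell_0+1}^{L_{\mathrm{opt}}}\sqrt{\tilde V^{\mathrm{IS}}_{\ell,\ell-1}C^{\mathrm{IS}}_{\ell,\ell-1}}$ as a function of $\mathrm{TOL}$, then squaring and multiplying by $\mathrm{TOL}^{-2}$. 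Using the assumed rates, the $\ell$-th summand for $\ell\geq\ell_0+1$ behaves as $\mathcal{O}(2^{(\gamma-\beta)\ell/2})$, so the geometric sum is dominated by its first term when $\gamma<\beta$, is $\mathcal{O}(L_{\mathrm{opt}})=\mathcal{O}(\log(1/\mathrm{TOL}))$ when $\gamma=\beta$, and is dominated by its last term ($\ell=L_{\mathrm{opt}}$) when $\gamma>\beta$.

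Next I would plug in the scalings of the endpoints. For the coarse term, $\sqrt{V^{\mathrm{IS}}_{\ell_0}C^{\mathrm{IS}}_{\ell_0}} = \mathcal{O}(\mathrm{TOL}^{(v_0-c_0)/2})$ directly from the hypotheses. For the fine endpoint, since $L_{\mathrm{opt}} = \log_2(N_{\mathrm{opt}}/N_0)$ and $N_{\mathrm{opt}} = \Theta(\mathrm{TOL}^{-1})$ by~\eqref{eq:Nopt} (equivalently $2^{L_{\mathrm{opt}}} = \Theta(\mathrm{TOL}^{-1/\alpha})$ once one uses the bias constraint $2^{-\alpha L_{\mathrm{opt}}} = \Theta(\mathrm{TOL})$), I would write the $\ell=L_{\mathrm{opt}}$ summand as $\mathcal{O}(2^{(\gamma-\beta)L_{\mathrm{opt}}/2}) = \mathcal{O}(\mathrm{TOL}^{-(\gamma-\beta)/(2\alpha)})$. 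There is one subtlety worth flagging: the geometric-sum bound for $\gamma<\beta$ gives $S = \mathcal{O}\bigl(\sqrt{V^{\mathrm{IS}}_{\ell_0}C^{\mathrm{IS}}_{\ell_0}} + 2^{(\gamma-\beta)(\ell_0+1)/2}\bigr)$, and the second term is not obviously negligible — one needs the relation between $\ell_0 = \ell_0(\mathrm{TOL})$ and the coarse-cost exponent. The hypothesis $C^{\mathrm{IS}}_{\ell_0} = \mathcal{O}(\mathrm{TOL}^{-c_0})$ together with $C^{\mathrm{IS}}_{\ell_0} = \Theta(2^{\gamma\ell_0})$ (consistent with the cost rate) forces $2^{\ell_0} = \Theta(\mathrm{TOL}^{-c_0/\gamma})$, whence the leading geometric term scales as $2^{(\gamma-\beta)\ell_0/2} = \mathcal{O}(\mathrm{TOL}^{(\beta-\gamma)c_0/(2\gamma)})$. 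Carrying the book-keeping through, squaring, and combining with $\mathrm{TOL}^{-2}$ yields $\text{Work} = \mathcal{O}(\mathrm{TOL}^{-2+H})$ with $H = \min(v_0-c_0,\,(\beta-\gamma)c_0/\gamma)$; to match the stated $H=\min(v_0-c_0,(\beta-\gamma)c_0)$ I would either absorb the $1/\gamma$ into the normalization of the cost exponent or note that $\gamma$ can be taken as a fixed reference rate (here $\gamma=1$ in the application), so the two expressions coincide.

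For the remaining regimes the bookkeeping is lighter. When $\gamma=\beta$, the sum over $\ell$ contributes $\mathcal{O}(L_{\mathrm{opt}}) = \mathcal{O}(\log(1/\mathrm{TOL}))$ and the coarse term contributes $\mathcal{O}(\mathrm{TOL}^{(v_0-c_0)/2})$; squaring gives either $\mathcal{O}(\log^2(\mathrm{TOL}))$ when $v_0-c_0\geq 0$ (the log term dominates or ties) or $\mathcal{O}(\mathrm{TOL}^{v_0-c_0})$ when $v_0-c_0<0$ (the decaying coarse term dominates), which multiplied by $\mathrm{TOL}^{-2}$ yields the two stated cases. When $\gamma>\beta$, the sum is dominated by its fine endpoint $\mathcal{O}(\mathrm{TOL}^{-(\gamma-\beta)/(2\alpha)})$, which competes with the coarse term $\mathcal{O}(\mathrm{TOL}^{(v_0-c_0)/2})$; taking the larger (i.e., the more negative exponent), squaring, and multiplying by $\mathrm{TOL}^{-2}$ gives $\mathcal{O}(\mathrm{TOL}^{-2-H})$ with $H = \max(c_0-v_0,\,(\gamma-\beta)/\alpha)$, as claimed. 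I expect the main obstacle to be precisely the identification in the $\gamma<\beta$ case of how the coarse level $\ell_0(\mathrm{TOL})$ is pinned down by the cost hypothesis and how the leading geometric term then enters $H$; the rest is a careful but routine repetition of the classical geometric-series argument with $\mathrm{TOL}$-dependent endpoints, using the bias constraint to translate $L_{\mathrm{opt}}$ into powers of $\mathrm{TOL}$ via $\alpha$.
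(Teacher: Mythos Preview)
Your approach is essentially identical to the paper's: both start from the sampling-work formula, reduce to bounding the bracketed sum $S$, split into the three geometric-series regimes ($\gamma<\beta$, $\gamma=\beta$, $\gamma>\beta$), and translate the endpoint levels $\ell_0$ and $L_{\mathrm{opt}}$ into powers of $\mathrm{TOL}$ via the cost and bias assumptions. Your flag on the $1/\gamma$ factor in the $\gamma<\beta$ case is well taken---the paper's proof simply asserts $2^{(\gamma-\beta)\ell_0/2}=\mathcal{O}\bigl(\mathrm{TOL}^{(\beta-\gamma)c_0/2}\bigr)$ without comment, which indeed relies on the implicit identification $2^{\ell_0}\sim\mathrm{TOL}^{-c_0}$ (consistent with the cost model $C_\ell=C_0 2^\ell$, i.e.\ $\gamma=1$, used elsewhere in the paper).
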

\begin{proof}
See Appendix~\ref{proof2}.
\end{proof}
\begin{remark}[MLIS vs. SLIS]
If the optimal single-level variance at the final optimal level $L_{\mathrm{opt}}$ scales as
\[
V_{L_{\mathrm{opt}}}^{\mathrm{IS}}=\mathcal{O}(\mathrm{TOL}^{v_L}),\qquad v_L\ge 0,
\]
from \eqref{samplingwork}, the SLIS sampling work is given by
\begin{equation}\label{SLISrate}
\mathrm{Work}^{\mathrm{SLIS}}_{\mathrm{sampling}}
= \mathcal{O}\!\left(\mathrm{TOL}^{-2-\frac{1}{\alpha}+v_L}\right).
\end{equation}
This rate also follows from Proposition~\ref{prop:MLISrate} by taking 
\(\ell_0=L_{\mathrm{opt}}\), yielding \(v_0=v_L\) and \(c_0=\tfrac{1}{\alpha}\).
For the MLIS rate to be better than that of SLIS, the following {necessary} conditions must hold:
\begin{itemize}
\item If \(\gamma<\beta\),
\begin{equation}
 \min\bigl(v_0 - c_0, \, (\beta-\gamma)c_0 \bigr) \;\geq\; v_L-\tfrac{1}{\alpha}
\qquad\Longrightarrow\qquad
v_0-c_0 \;\geq\; v_L-\tfrac{1}{\alpha}
\ \text{and}\ 
(\beta-\gamma)c_0 \;\geq\; v_L-\tfrac{1}{\alpha}.
\end{equation}
\item If \(\gamma\ge\beta\),
\begin{equation}
v_L \;\leq\; \frac{1-(\gamma-\beta)}{\alpha}.
\end{equation}
\end{itemize}
Under an optimized MLIS strategy, these conditions are automatically satisfied because an optimal MLIS cannot perform worse than SLIS.
\end{remark}
\begin{remark}[MLIS vs. MLMC]
The potential for MLIS to improve on the standard MLMC complexity rate exists only when 
\( \gamma < \beta \) and 
\(
H = \min\bigl(v_0 - c_0, \, (\beta-\gamma)c_0 \bigr) > 0.
\)
This situation requires \( v_0 - c_0 > 0 \), meaning that \( V_{\ell_0}^{\mathrm{IS}} C_{\ell_0}^{\mathrm{IS}} \) decays with~$\mathrm{TOL}$. The optimal choice of \(\ell_{0}\) should ensure \(v_0 - c_0 \geq 0\). If \(v_0 - c_0 = 0\), then MLIS reduces to the standard MLMC method. As observed in the previous numerical example \ref{NumEx}, the regime \(\gamma < \beta\) explains the improved convergence rate relative to the classical MLMC method. 

When \(\gamma \geq \beta\), no improvement over the standard MLMC rate is possible: the complexity coincides with the classical MLMC results of~\cite{giles2015multilevel}, regardless of the variance decay. The same conclusion applies: by optimality, \(v_0 - c_0 \geq 0\), and if equality holds, MLIS reduces to the MLMC method. 
\end{remark}
\begin{corollary}[Work rate of MLIS with optimal choice of the coarse level]
Let $\mathrm{TOL}$ denote the target accuracy. This work assumes the following:
\begin{itemize}
  \item \( |\mathbb{E}[g_w^{\ell} - q_w]| = \mathcal{O}(2^{-\alpha \ell}) \),
  \item \( V^{\mathrm{IS}}_{\ell,\ell-1} = \mathcal{O}(2^{-\beta \ell}) \),
  \item \( C^{\mathrm{IS}}_{\ell,\ell-1} = \mathcal{O}(2^{\gamma \ell}) \),
  
    \item $\ell_0^{\mathrm{opt}}(\mathrm{TOL})$ denotes the optimal coarse level, and \[V^{\mathrm{IS}}_{{\ell_0^{\mathrm{opt}}}}= \mathcal{O}(\mathrm{TOL}^{v_0^{\mathrm{opt}}}), \qquad C^{\mathrm{IS}}_{{\ell_0^{\mathrm{opt}}}}= \mathcal{O}(\mathrm{TOL}^{-c_0^{\mathrm{opt}}}),
    \]
    where $0 \leq c_0^{\mathrm{opt}} \leq \frac{1}{\alpha}$ and $v_0^{\mathrm{opt}} \geq 0$.
\end{itemize}
Thus, $ v_0^{\mathrm{opt}} - c_0^{\mathrm{opt}} \geq 0$, and the optimal sampling work of MLIS satisfies the following:
\[ \text{Work}_\text{sampling}^{\text{MLIS}}=
\begin{cases}
\mathcal{O}\bigl(\mathrm{TOL}^{-2+H}\bigr),
  \qquad
  H = \min\bigl(v_0^{\mathrm{opt}}  -  c_0^{\mathrm{opt}} , \, (\beta-\gamma) c_0^{\mathrm{opt}}  \bigr), & \beta > \gamma, \\
\mathcal{O}(\mathrm{TOL}^{-2} \log^2(\mathrm{TOL})), & \beta = \gamma, \\
\mathcal{O}(\mathrm{TOL}^{-2 - (\gamma - \beta)/\alpha}), & \beta < \gamma.
\end{cases}
\]
\end{corollary}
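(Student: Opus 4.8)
The plan is to derive the corollary from Proposition~\ref{prop:MLISrate} in two steps: first establish the sign statement $v_0^{\mathrm{opt}}-c_0^{\mathrm{opt}}\ge 0$ from the optimality of the coarse level $\ell_0^{\mathrm{opt}}$, and then substitute this sign into the three-regime estimate of Proposition~\ref{prop:MLISrate} and simplify. For $\beta>\gamma$ the exponent $H=\min\bigl(v_0^{\mathrm{opt}}-c_0^{\mathrm{opt}},(\beta-\gamma)c_0^{\mathrm{opt}}\bigr)$ is exactly the one already produced by Proposition~\ref{prop:MLISrate}, so nothing changes there except the guarantee $H\ge 0$; for $\beta=\gamma$ the condition $v_0^{\mathrm{opt}}-c_0^{\mathrm{opt}}\ge 0$ selects the logarithmic branch of that proposition, giving $\mathcal{O}(\mathrm{TOL}^{-2}\log^2(\mathrm{TOL}))$; and for $\beta<\gamma$, since then $c_0^{\mathrm{opt}}-v_0^{\mathrm{opt}}\le 0\le(\gamma-\beta)/\alpha$, the maximum in $H=\max\bigl(c_0^{\mathrm{opt}}-v_0^{\mathrm{opt}},(\gamma-\beta)/\alpha\bigr)$ collapses to $(\gamma-\beta)/\alpha$, leaving the classical MLMC rate $\mathcal{O}(\mathrm{TOL}^{-2-(\gamma-\beta)/\alpha})$ with no residual dependence on the coarse-level scalings. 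Thus the only content beyond Proposition~\ref{prop:MLISrate} is the sign inequality together with these bookkeeping simplifications.

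To establish $v_0^{\mathrm{opt}}-c_0^{\mathrm{opt}}\ge 0$ I would use the characterisation of $\ell_0^{\mathrm{opt}}$ coming from Proposition~\ref{prop1} and Algorithm~\ref{alg:N0opt}: it is the smallest level at which the gain inequality $\sqrt{V^{\mathrm{IS}}_{\ell+1}C^{\mathrm{IS}}_{\ell+1}}>\sqrt{V^{\mathrm{IS}}_{\ell}C^{\mathrm{IS}}_{\ell}}+\sqrt{\tilde V^{\mathrm{IS}}_{\ell+1,\ell}C^{\mathrm{IS}}_{\ell+1,\ell}}$ holds for every finer level (with the convention $\ell_0^{\mathrm{opt}}=L_{\mathrm{opt}}$, i.e.\ MLIS reduces to SLIS, if no such level exists; and the case $\ell_0^{\mathrm{opt}}=0$ trivially gives $v_0^{\mathrm{opt}}=c_0^{\mathrm{opt}}=0$ since $N_0$ is fixed). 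If $1\le\ell_0^{\mathrm{opt}}<L_{\mathrm{opt}}$, minimality forces the gain inequality to fail at $\ell=\ell_0^{\mathrm{opt}}-1$, which reads
\[
\sqrt{V^{\mathrm{IS}}_{\ell_0^{\mathrm{opt}}}C^{\mathrm{IS}}_{\ell_0^{\mathrm{opt}}}}\;\le\;\sqrt{V^{\mathrm{IS}}_{\ell_0^{\mathrm{opt}}-1}C^{\mathrm{IS}}_{\ell_0^{\mathrm{opt}}-1}}+\sqrt{\tilde V^{\mathrm{IS}}_{\ell_0^{\mathrm{opt}},\,\ell_0^{\mathrm{opt}}-1}C^{\mathrm{IS}}_{\ell_0^{\mathrm{opt}},\,\ell_0^{\mathrm{opt}}-1}}.
\]
In the relevant regime $\gamma<\beta$ the second term is $\mathcal{O}(2^{(\gamma-\beta)\ell_0^{\mathrm{opt}}})=\mathcal{O}(1)$ uniformly in $\mathrm{TOL}$; combining this with the geometric cost growth $C^{\mathrm{IS}}_{\ell-1}\sim 2^{-\gamma}C^{\mathrm{IS}}_{\ell}$ and the natural requirement that the single-level IS variance decay at a bounded geometric rate per level, the first term on the right is also $\mathcal{O}(1)$ in $\mathrm{TOL}$, forcing $V^{\mathrm{IS}}_{\ell_0^{\mathrm{opt}}}C^{\mathrm{IS}}_{\ell_0^{\mathrm{opt}}}=\mathcal{O}(1)$, that is, $v_0^{\mathrm{opt}}\ge c_0^{\mathrm{opt}}$. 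When $\gamma\ge\beta$ the statement is inherited from the fact that an optimal MLIS can never be worse than SLIS (SLIS being one of the admissible coarse-level choices), and in any case the final rate in that regime is independent of the precise value of $v_0^{\mathrm{opt}}-c_0^{\mathrm{opt}}$.

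The main obstacle is precisely this sign step: making ``the optimal coarse level cannot leave $V^{\mathrm{IS}}_{\ell_0}C^{\mathrm{IS}}_{\ell_0}$ growing like $\mathrm{TOL}^{-1}$'' fully rigorous requires controlling how $\ell_0^{\mathrm{opt}}$ itself grows with $\mathrm{TOL}$ and hence a mild extra regularity hypothesis on the single-level IS variance (that it does not drop by more than a fixed geometric factor between consecutive levels), which should be folded into the hypotheses or noted as a remark. Once the sign is fixed, the remainder is the routine substitution into Proposition~\ref{prop:MLISrate} described in the first paragraph, and the $\gamma<\beta$, $\gamma=\beta$, $\gamma>\beta$ trichotomy can be read off verbatim from that proposition.
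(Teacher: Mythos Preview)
Your approach is essentially the same as the paper's. The paper does not give a standalone proof of this corollary: it is stated as an immediate consequence of Proposition~\ref{prop:MLISrate} together with the two preceding remarks, which simply assert that ``by optimality, $v_0-c_0\ge 0$'' without a detailed derivation. Your plan---establish the sign $v_0^{\mathrm{opt}}-c_0^{\mathrm{opt}}\ge 0$ from optimality of the coarse level, then read off the three regimes from Proposition~\ref{prop:MLISrate}---matches this exactly, and your case-by-case reduction (the logarithmic branch for $\beta=\gamma$, the collapse of the $\max$ for $\beta<\gamma$) is correct.

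You in fact go further than the paper on the sign step by trying to extract it from the failure of the gain inequality at $\ell_0^{\mathrm{opt}}-1$, and you are right to flag that this is the delicate point: the paper's remarks treat $v_0^{\mathrm{opt}}-c_0^{\mathrm{opt}}\ge 0$ as a consequence of ``an optimal MLIS cannot perform worse than SLIS'' and do not supply the kind of levelwise regularity argument you sketch. Your honest identification of the needed extra hypothesis (bounded geometric decay of the single-level IS variance between consecutive levels) is appropriate and is more than the paper itself provides.
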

This work estimates the rates \( v_0^{\mathrm{opt}}  \) and \( c_0^{\mathrm{opt}} \) in the fourth assumption numerically to validate the theoretical rate in Proposition~\ref{prop:MLISrate}. At the optimal coarse level \( \ell_0^{\mathrm{opt}} \), Figure~\ref{ratesvc} depicts the dependence on \(\mathrm{TOL}\) of the single-level variance \( V_{{\ell_0^{\mathrm{opt}}}}^{\mathrm{IS}} \), the single-level cost 
\( C_{{\ell_0^{\mathrm{opt}}}}^{\mathrm{IS}} \), and their product 
\( V_{{\ell_0^{\mathrm{opt}}}}^{\mathrm{IS}} C_{{\ell_0^{\mathrm{opt}}}}^{\mathrm{IS}} \).
\begin{figure}[ht]
\centering
\begin{minipage}[b]{0.32\linewidth}
    \centering
    \includegraphics[width=\linewidth]{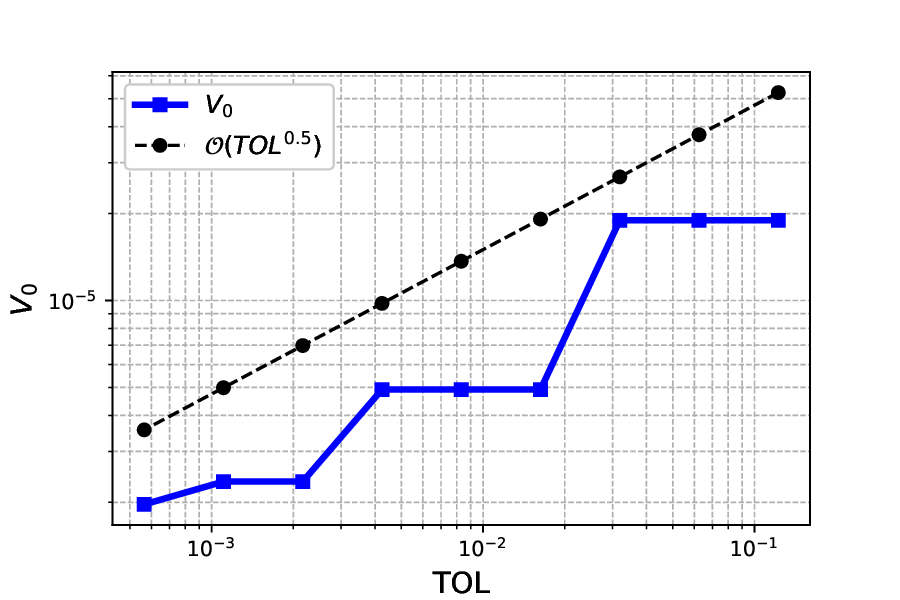}\\
    Variance=\( V_{{\ell_0^{\mathrm{opt}}}}^{\mathrm{IS}} \)
\end{minipage}\hfill
\begin{minipage}[b]{0.32\linewidth}
    \centering
    \includegraphics[width=\linewidth]{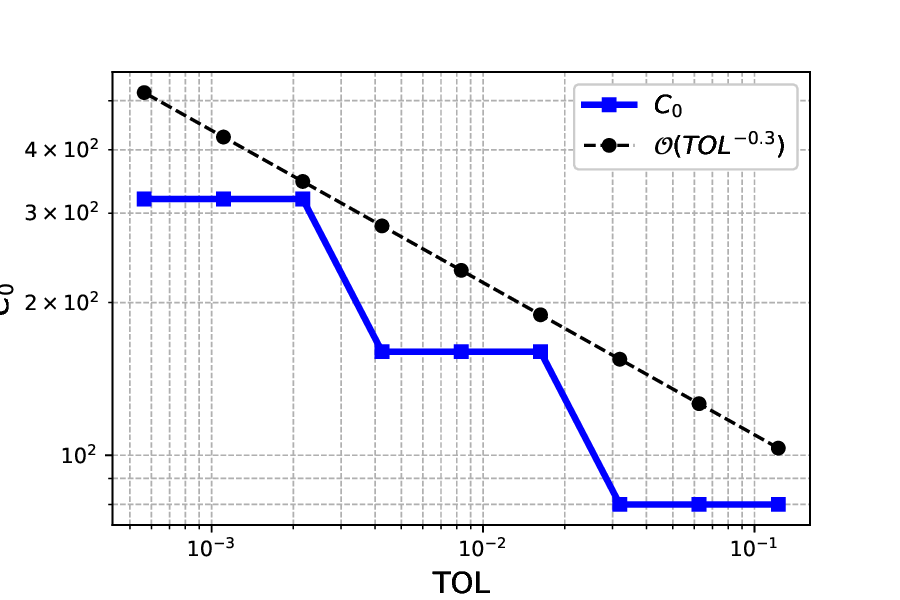}\\
    Cost=\( C_{{\ell_0^{\mathrm{opt}}}}^{\mathrm{IS}} \)
\end{minipage}\hfill
\begin{minipage}[b]{0.32\linewidth}
    \centering
    \includegraphics[width=\linewidth]{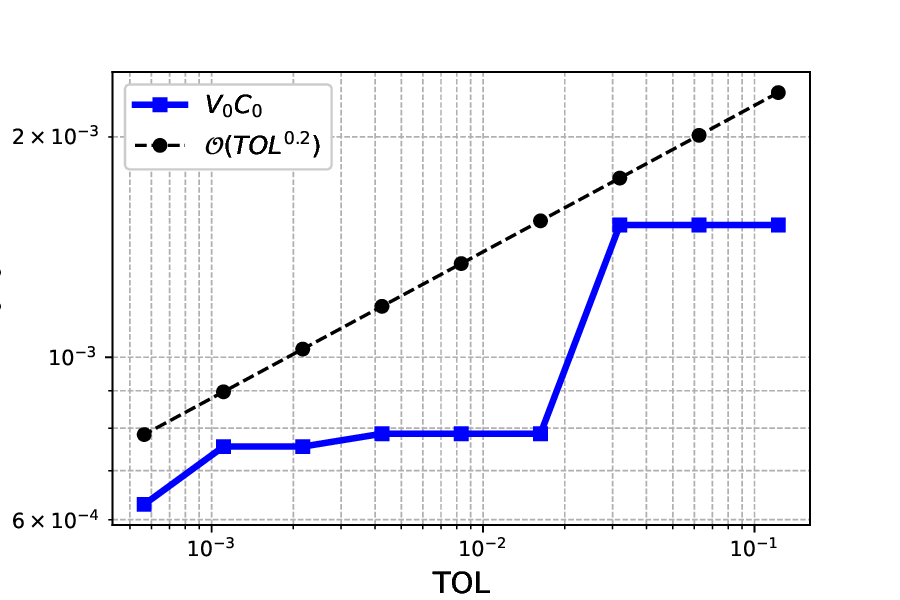}\\
    Product=\( V_{{\ell_0^{\mathrm{opt}}}}^{\mathrm{IS}} C_{{\ell_0^{\mathrm{opt}}}}^{\mathrm{IS}} \)
\end{minipage}
\caption{Dependence of the variance, cost, and their product on tolerance at the optimal coarse level.}
\label{ratesvc}
\end{figure}
These plots yield the following: \(
v_0^{\mathrm{opt}}  = 0.5, \quad c_0^{\mathrm{opt}}  = 0.3, \quad v_0^{\mathrm{opt}}  - c_0^{\mathrm{opt}}  = 0.2.
\)

Applying Proposition~\ref{prop:MLISrate} with 
\( \beta = 2 \) and \( \gamma = 1 \) results in \[
H = \min \bigl(v_0^{\mathrm{opt}} - c_0^{\mathrm{opt}} ,\; (\beta - \gamma)c_0\bigr)
= \min (0.2, 0.3) = 0.2,
\]
so that
\[
\text{Work}^{\mathrm{MLIS}}_{\mathrm{sampling}}
= \mathcal{O}(\mathrm{TOL}^{-2+0.2})
= \mathcal{O}(\mathrm{TOL}^{-1.8}).
\]
This theoretical rate aligns with the convergence rate observed in the numerical experiments in Figure \ref{SLvsML}, confirming the analytical validity of this method.
\section{Error Analysis in the presence of Smoothing}
\label{error analysis}
Recall that in Section~\ref{smooth section}, we introduced the smooth approximations \(f^{d}\) and \(g_{w}^{c}\) (see~\eqref{fdgc}) to improve the
multilevel variance convergence rates. In the numerical experiments presented
earlier, the smoothing parameters were kept fixed for each value of
\(\mathrm{TOL}\). In this section, we first show that the smoothing error
\(\Delta^{c,d}\) admits a quadratic dependence on the parameters \(c\) and \(d\),
which leads to an optimal choice of these parameters proportional to \(\sqrt{\mathrm{TOL}}\). We then demonstrate that, in particular cases, such as the
occupation time example, an alternative strategy can be used: although \(c\) and
\(d\) may be chosen independently of \(\mathrm{TOL}\), selecting them in a
specific dependent relation can yield an almost vanishing smoothing error.
\subsection{Characterization of the Smoothing Error}
The smoothing introduces the approximation error:
\begin{equation}\label{SmoothingError}
\Delta^{c,d}=q_w-q_w^{c,d}.
\end{equation}
By inserting and subtracting the intermediate term $\mathbb{E}[g^{c}_{w}(Z(T))]$, we obtain the decomposition
\begin{equation}\label{errdecomp}
\Delta^{c,d} 
= \Delta^{c,0}+\Delta^{d}_{c},
\end{equation}
where 
\begin{equation}
\Delta^{c,0}=\mathbb{E}[g_w(Z(T))]-\mathbb{E}[g^{c}_{w}(Z(T))],
\end{equation}
\begin{equation}
\Delta^{d}_{c}=\mathbb{E}[g^{c}_{w}(Z(T))]-\mathbb{E}[g^{c}_{w}(Z^{d}(T))].
\end{equation}
Proposition~\ref{smg} establishes that the smoothing error associated with \(g_{w}^{c}\), denoted \(\Delta^{c,0}\), scales quadratically in \(c\). An analogous quadratic behavior for the smoothing error $\Delta^{d}_{c}$ associated with \(f^d\), when tested using the smoothed observable \(g_{w}^{c}\), is established in Proposition~\ref{smf}.  
\begin{proposition}[Smoothing error for \(g_{w}^{c}\)]
\label{smg}
Let \(\rho_{T}\) be the density of \(Z(T)\).
Assume that \(\rho_{T}\) is continuously differentiable in a neighborhood of \(w\). 
Then, as \(c \downarrow 0\),
\[
\Delta^{c,0}=\mathbb{E}\!\left[g_{w}(Z(T))\right]-\mathbb{E}\!\left[g_{w}^{c}(Z(T))\right]
 = \frac{c^{2}}{6}\,\rho_{T}'(w) + o(c^{2}).
\]
\end{proposition}
\begin{proof}
See Appendix~\ref{proof3}.
\end{proof}
\begin{proposition}[Smoothing error for \(f^{d}\)]
\label{smf}
Recall that \((\boldsymbol{X}(s),Z(s))\) solves \eqref{SDEgdplus}. Let \(\rho_{s}\) be the density of \(Z(s)\). Let \(u^{c}(t,\boldsymbol{x},z)\) denote the solution of the KBE associated 
with the smoothed payoff \(g_{w}^{c}\) \eqref{fdgc}. Assume that Assumptions~\ref{ass1}, \ref{ass2}  and \ref{ass3}  stated in Appendix \ref{proof4} hold.
Then, as \(d\downarrow0\),
\[
\Delta_{c}^{d}= O(d^{2}).
\]
\end{proposition}
\begin{proof}
See Appendix~\ref{proof4}.
\end{proof}
\begin{remark}[Regularity Requirements]
\label{remreg}
Assumptions~\ref{ass1}, \ref{ass2} and \ref{ass3} impose regularity and integrability conditions on 
\(H_s^c(z)=\mathbb{E}\!\left[\partial_{z}u^{c}(s,\boldsymbol{X}(s),Z(s))\mid Z(s)=z\right]\rho_s(z)\), and therefore ultimately on the density \( \rho_s \) 
and the value function \(u^c\).  
In the occupation-time setting considered in this work, 
the discontinuous structure of the drift makes a rigorous verification of these 
assumptions delicate and beyond the scope of this paper. 
For this reason, the quadratic behavior predicted by Proposition~\ref{smf} 
is assessed empirically through numerical experiments in 
Section~\ref{NumQuadr}.
\end{remark}

Using the decomposition \eqref{errdecomp} together with Propositions~\ref{smg} and~\ref{smf}, and assuming that the respective conditions of these propositions are satisfied, we obtain that the smoothing error satisfies
\begin{equation}
\label{smerrfinal}
\Delta^{c,d}
= C_{1}c^{2} + C_{2}d^{2} + o(c^{2}) + o(d^{2}),
\end{equation}
where \(C_{1},C_{2}\in\mathbb{R}\) and \(C_{2}\) may depend on \(c\). These constants can be estimated numerically and used to choose \(c\) and \(d\) to meet a prescribed tolerance. 
\subsection{Total Error Splitting}
\label{errsplit}
Let $\mathcal{A}_{\mathrm{IS}}^{c,d}$ denote the IS estimator defined similarly to \eqref{ISestimator}, but applied to the smoothed functions $g_w^c$ and $f^d$. The total absolute error $\epsilon_T^{c,d}$ satisfies
\begin{equation}
\epsilon_T^{c,d}
= \bigl| q_w - \mathcal{A}_{\mathrm{IS}}^{c,d} \bigr|
\leq  \bigl|\Delta^{c,d}\bigr| + \epsilon_b^{c,d} + \epsilon_s^{c,d},
\end{equation}
where $\epsilon_b^{c,d}$ and $\epsilon_s^{c,d}$ denote, respectively, the bias and the statistical error associated with estimating $q_w^{c,d}$. To achieve an overall relative accuracy of order $\mathrm{TOL}$, we impose:
\begin{equation}
\epsilon_b^{c,d} \leq \theta_{1}\, q_w \mathrm{TOL}, \qquad
\epsilon_s^{c,d}\leq \theta_{2}\,q_w \mathrm{TOL}, \qquad
|\Delta^{c,d}| \leq  \theta_{3}\, q_w \mathrm{TOL}, \qquad
\theta_{1}+\theta_{2}+\theta_{3}=1.
\end{equation}
This decomposition ensures that none of the error sources dominates the others, allowing for optimal parameter selection across numerical, stochastic, and smoothing components.

The condition $|\Delta^{c,d}| \leq  \theta_3\,q_w \mathrm{TOL}$ can be decomposed using
\(|\Delta^{c,d}| \le |\Delta^{c,0}| + |\Delta^{d}_{c}|.\)
Assigning a fraction $\tilde{\theta}\in(0,1)$ of the smoothing tolerance to $|\Delta^{c,0}|$ and the remaining fraction $1-\tilde{\theta}$ to $|\Delta^{d}_{c}|$, we obtain
\begin{equation}\label{choicec}
|\Delta^{c,0}| = \tilde{\theta}\,\theta_{3}\,q_w \mathrm{TOL}
\quad\Longrightarrow\quad
c = \sqrt{\frac{\tilde{\theta}\,\theta_{3}\,q_w \mathrm{TOL}}{|C_{1}|}},
\end{equation}
\begin{equation}\label{choiced}
|\Delta^{d}_{c}| = (1-\tilde{\theta})\,\theta_{3}\,q_w \mathrm{TOL}
\quad\Longrightarrow\quad
d = \sqrt{\frac{(1-\tilde{\theta})\,\theta_{3}\,q_w \mathrm{TOL}}{|C_{2}|}}.
\end{equation}
Finally, choosing $c$ and $d$ according to \eqref{choicec}--\eqref{choiced} guarantees that
\begin{equation}
\left| \Delta^{c,d} \right|<\theta_3 q_w \mathrm{TOL}.
\end{equation}
An optimal selection of
$\theta_{1}$, $\theta_{2}$, $\theta_{3}$, and $\tilde{\theta}$ may in principle
be obtained by minimizing the total computational cost with respect to these
variables. 
\subsection{Numerical Verification of the Quadratic Smoothing Error}
\label{NumQuadr}
Returning to the fade duration example of Section~\ref{section num}, a rigorous
justification of the quadratic form \eqref{smerrfinal} would require verifying
the assumptions underlying Propositions~\ref{smg} and~\ref{smf}. As noted in
Remark~\ref{remreg}, establishing these properties is
nontrivial in the occupation-time setting. For this reason, we focus here on numerically validating the predicted
quadratic behavior of the smoothing error.

We begin with the quadratic form of $\Delta^{c,0}$ and the estimation of $C_{1}$.
Reference values of
$q_{w}^{c,0}$ are computed for several choices of $c$, together with the
corresponding errors $\Delta^{c,0}$. The simulations use a very fine time discretization and a large number of IS samples, ensuring that the combined absolute discretization and statistical errors are negligible (approximately $10^{-6}$ in total).
Consequently, the observed deviation is dominated by the
smoothing error. 

The calculated values of $\Delta^{c,0}$ are observed to be negative.
Figure~\ref{errc} reports the smoothing error $-\Delta^{c,0}$ on a log--log scale.
The quadratic scaling of $\Delta^{c,0}$ is confirmed, and the constant $C_{1}$ is
obtained from the slope. From Figure~\ref{errc}, we obtain the estimate
\begin{equation}
\label{C1}
\hat{C}_{1} = -4.29 \times 10^{-3}.
\end{equation}

\begin{figure}[ht]
\begin{center}
\includegraphics[scale = 0.4]{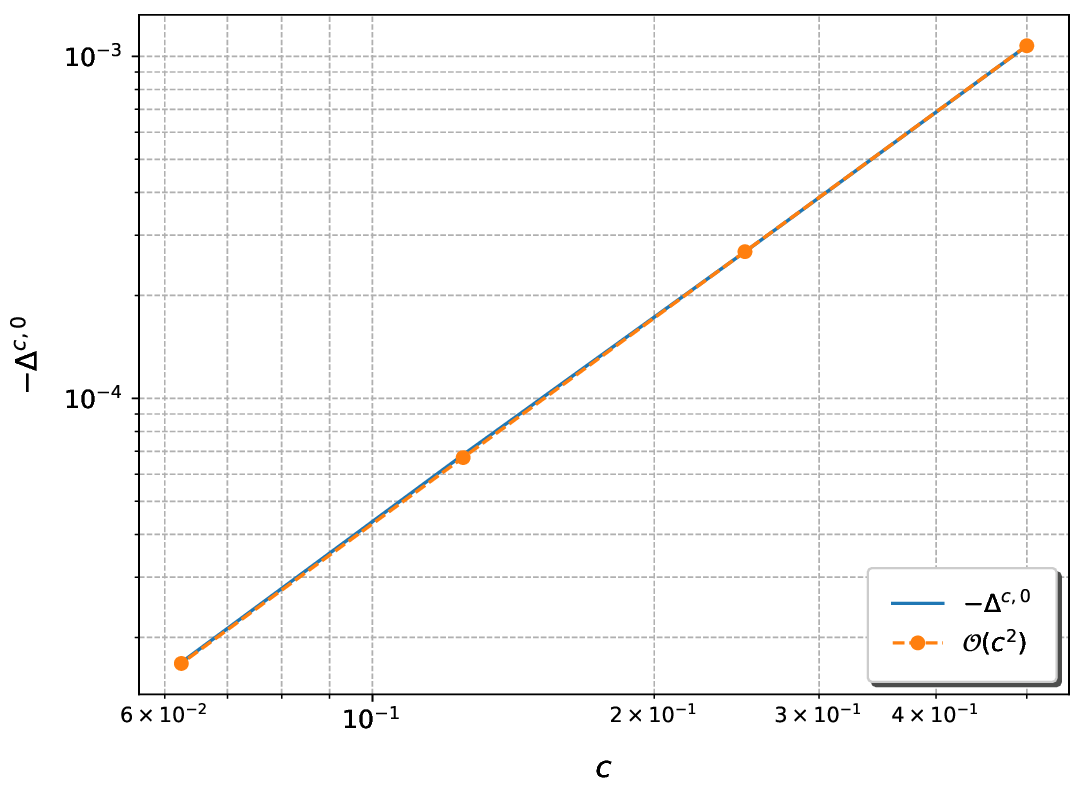}
\hspace{-0.1cm}
\caption{Verification of the quadratic behavior of the smoothing error of $g_w^c$.}
\label{errc}
\vspace{-3mm}
\end{center}
\end{figure}
A similar procedure is used to confirm the quadratic form of $\Delta_c^d$ and estimate $C_{2}$. To examine whether $C_{2}$
depends on the value of $c$, we compute reference values of $q_{w}^{c,d}$ and
the corresponding errors $\Delta_{c}^{d}$ for various pairs $(c,d)$. The results
are shown in Figure~\ref{errd}. From the figure, we observe that $C_{2}$ does not
depend on the smoothing applied to $g_{w}$. Moreover, the quadratic scaling of
$\Delta_{c}^{d}$ is confirmed, and the resulting ratio yields the estimate
\begin{equation}
\label{C2}
\hat{C}_{2} = 7.65\times 10^{-2}.
\end{equation}
\begin{figure}[ht]
\begin{center}
\includegraphics[scale = 0.4]{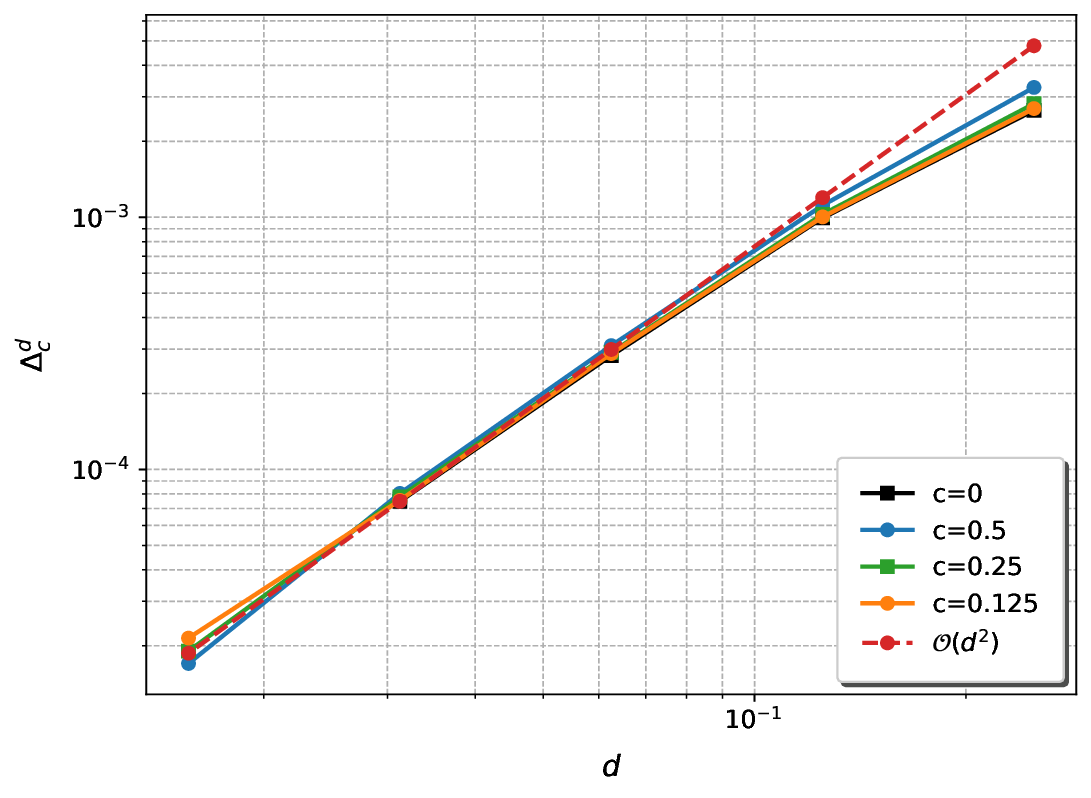}
\hspace{-0.1cm}
\caption{Verification of the quadratic behavior of the smoothing error of $f^d$.}
\label{errd}
\vspace{-3mm}
\end{center}
\end{figure}
\subsection{Choice of the smoothing parameters}
\label{choicepar}
With numerical values of $C_{1}$ and $C_{2}$ available, we now address the
choice of the smoothing parameters $c$ and $d$ required to approximate $q_{w}$
with relative accuracy $\mathrm{TOL}$. 
Although one might naturally choose the smoothing parameters $c$ and $d$
directly from~\eqref{choicec}--\eqref{choiced}, an interesting feature emerges
in the fade duration example: the constants $\hat{C_{1}}$ in \eqref{C1} and $\hat{C_{2}}$ in \eqref{C2} have opposite
sign. This implies that the leading order terms in the smoothing errors associated with $g_w$ and $f$ may cancel. In fact, one can enforce
\(
\Delta^{c,0} = - \Delta_{c}^{d},
\)
which yields the relation
\begin{equation}
\label{choicecd}
d = c \sqrt{-\frac{C_{1}}{C_{2}}}.
\end{equation}
This observation shows that what matters is not how small $c$ and $d$ are
individually, but how they are related. However, Propositions~\ref{smg} and
\ref{smf} still require both parameters to be sufficiently small for the
quadratic regime to hold. 

In practice, the leading order terms of the smoothing error will not vanish exactly because we only use
estimates $\hat{C}_{1}$ and $\hat{C}_{2}$ rather than the exact constants. Thus
the remaining error arises not from the smoothing itself, but from the
imperfect estimation of the constants. The resulting absolute smoothing error is
\begin{equation}
\begin{aligned}
|\Delta^{c,d}|
  &= \left| -\left( \frac{\hat{C}_{2}}{\hat{C}_{1}} C_{1}\, d^{2} \right) + C_{2} d^{2} \right|+o(d^{2}) \\
  &= d^{2} \left|\, C_{2} - \frac{\hat{C}_{2}}{\hat{C}_{1}}\, C_{1} \right|+o(d^{2}).
\end{aligned}
\end{equation}
In the numerical example, we take $c=0.5$, which yields $d=0.122$ through
\eqref{choicecd}. The smoothing error $\Delta^{c,d}$ is then computed
numerically for this pair $(c,d)$ using a reference value obtained at
$c=0$, $d=0$, with a very fine time discretization and an extremely large
number of samples. For this choice, we obtain an absolute smoothing error
\(
|\Delta^{c,d} |= 1.12\times 10^{-7},
\)
and the corresponding relative error is
\(
\frac{\Delta^{c,d}}{q_w} = 3.9 \times 10^{-5}
\).

Finally, the use of fixed smoothing parameters ($d = 0.122$ and $c = 0.5$) over
the entire range of $\mathrm{TOL}$ considered in Section~\ref{smooth section} and
Section~\ref{NumEx} is now justified. In particular, the SLIS and MLIS methods
with smoothing, shown in Figure~\ref{SLvsML}, do not introduce any appreciable
bias compared with the non-smoothed case, which corresponds to the original
problem, namely the fade duration \eqref{equ}.
\section*{Conclusion}
This work addressed the rare-event estimation problem for the CCDF of the
occupation time of systems modeled by SDEs. By leveraging the connection between IS and SOC, we developed an optimal SLIS estimator
and incorporated the preprocessing cost of the auxiliary HJB equation into a
unified analysis of computational efficiency. This allowed us to quantify the
trade-off between control accuracy and variance reduction and to determine the
optimal balance that minimizes the total computational work.

Building on the SLIS methodology, we extended the approach to a MLIS framework. A key observation in this setting is that the single-level
variance decays with the refinement of the discretization, a behavior rooted in
the zero-variance property of the optimal control. This motivated the
derivation of a necessary and sufficient condition under which MLIS
outperforms SLIS. To satisfy this condition, we introduced a smoothing of both
the drift and the observable, together with a common-likelihood MLIS
formulation designed to preserve variance-decay under IS. We
also established that the smoothing error can be rigorously controlled and is
negligible, ensuring that the resulting estimator remains unbiased for the
original occupation-time problem.

Our theoretical analysis further showed that the classical MLMC complexity
framework extends naturally to settings where the coarse-level variance depends on the target accuracy, and we identified scenarios in which MLIS achieves
computational rates strictly better than those available in standard MLMC.
Numerical experiments on fade duration estimation confirmed these theoretical
predictions, demonstrating substantial gains in efficiency and validating the
practical relevance of the proposed methodology.

Overall, this work shows that combining SOC-based IS, smoothing,
and multilevel methods, while explicitly accounting for preprocessing
costs, provides a powerful strategy for rare-event estimation in SDEs.
\textbf{Acknowledgments} This publication is based on work supported by the King Abdullah University of Science and Technology (KAUST) Office of Sponsored Research (OSR) under Award No. OSR-2019-CRG8-4033 and the Alexander von Humboldt Foundation. 
\appendix
\section{Proof of Proposition \ref{prop1}}
\label{proof1}
First, ~\eqref{condMLnewcase1} is equivalent to the following inequality:
\begin{equation}
\label{condML}
\sqrt{V_{\ell+1} \, C_{{\ell+1}}} > \sqrt{V_\ell \, C_{{\ell}}} + \sqrt{V_{\ell+1,\ell} \, C_{\ell+1,\ell}}.
\end{equation}
This equivalence can be verified by making the following substitutions:
\begin{equation}
C_{\ell} = C_{0} 2^\ell, \quad C_{{\ell+1}} = C_{0} 2^{\ell+1}, \quad C_{\ell+1,\ell} = 3C_{0}2^\ell,
\end{equation}
and simplifying both sides of~\eqref{condML}.

\medskip
\noindent
\textbf{(1) Necessity:}
Let us assume that condition in~\eqref{condMLnewcase1} is not satisfied for any \( \ell_0 < L_{\mathrm{opt}} \). Then, for all  \( \ell<L_{\mathrm{opt}} \), inequality~\eqref{condML} is reversed:
\begin{equation}
\label{reversed}
\sqrt{V_{\ell+1} \, C_{{\ell+1}}} \leq \sqrt{V_{\ell} \, C_{\ell}} + \sqrt{V_{\ell+1,\ell} \, C_{\ell+1,\ell}}.
\end{equation}
Applying this recursively starting from \( \ell = L_{\mathrm{opt}} - 1 \), yields
\begin{equation}
\sqrt{V_{{L_{\mathrm{opt}}}} \, C_{{L_{\mathrm{opt}}}}} \leq \sqrt{V_{{L_{\mathrm{opt}} - 1}} \, C_{{L_{\mathrm{opt}} - 1}}} + \sqrt{V_{L_{\mathrm{opt}},L_{\mathrm{opt}} - 1} \, C_{L_{\mathrm{opt}},L_{\mathrm{opt}} - 1}},
\end{equation}
\begin{equation}
\sqrt{V_{{L_{\mathrm{opt}} - 1}} \, C_{{L_{\mathrm{opt}} - 1}}} \leq \sqrt{V_{{L_{\mathrm{opt}} - 2}} \, C_{{L_{\mathrm{opt}} - 2}}} + \sqrt{V_{L_{\mathrm{opt}} - 1, L_{\mathrm{opt}} - 2} \, C_{L_{\mathrm{opt}} - 1, L_{\mathrm{opt}} - 2}},
\end{equation}
Repeating this backward until any level \( \ell < L_{\mathrm{opt}} \) yields
\begin{equation}
\sqrt{V_{{L_{\mathrm{opt}}}} \, C_{{L_{\mathrm{opt}}}}} \leq \sqrt{V_{{\ell}} \, C_{{\ell}}} + \sum_{\ell = \ell + 1}^{L_{\mathrm{opt}}} \sqrt{V_{\ell,\ell-1} \, C_{\ell,\ell-1}}.
\end{equation}
This result implies that, starting from any coarse level \(\ell\),
\begin{equation}
\text{Work}_\text{sampling}^{\text{SLMC}} \leq \text{Work}_\text{sampling}^{\text{MLMC}},
\end{equation}
with equality occurring only when \( \ell = L_{\mathrm{opt}} \). Hence, no gain is achieved by the MLMC method unless the condition in~\eqref{condMLnewcase1} is satisfied for some \( \ell_0 < L_{\mathrm{opt}} \).
\medskip
\noindent
\\
\textbf{(2) Sufficiency:}
Next, let us assume that the condition in~\eqref{condMLnewcase1} is satisfied for some \( \ell_0 < L_{\mathrm{opt}} \) and for all \( \ell \geq \ell_0 \). Therefore, inequality~\eqref{condML} holds for all \( \ell \geq \ell_0 \). By induction, this approach obtains the following:
\begin{equation}
\sqrt{V_{{L_{\mathrm{opt}}}} \, C_{{L_{\mathrm{opt}}}}} > \sqrt{V_{{\ell_0}} \, C_{{\ell_0}}} + \sum_{\ell = \ell_0 + 1}^{L_{\mathrm{opt}}} \sqrt{V_{\ell,\ell-1} \, C_{\ell,\ell-1}}.
\end{equation}
This result implies that, starting with a coarse level $\ell_0$,
\begin{equation}
\text{Work}_\text{sampling}^{\text{MLMC}} < \text{Work}_\text{sampling}^{\text{SLMC}},
\end{equation}
that is, the MLMC method achieves less sampling work than the SLMC method when initialized at level \( \ell_0 \).
\section{Proof of Corollary \ref{cor}}
\label{proofcor}
From Proposition~\ref{prop1}, if the MLMC method is strictly better than the SLMC method, then a level \( \ell_0 < L_{\mathrm{opt}} \) exists such that
\[\sqrt{2V_{\ell_0+1}} > \sqrt{V_{\ell_0}} + \sqrt{3\,V_{\ell_0+1,\ell_0}}.\]
Dividing this result throughout by \( \sqrt{V_0} \) yields
\[\sqrt{\frac{2 V_{\ell_0+1}}{V_0}} > \sqrt{\frac{V_{\ell_0}}{V_0}} 
+ \sqrt{\frac{3 V_{\ell_0+1,\ell_0}}{V_0}}.\]
By Assumption~(1),
\[1 - \varepsilon \leq \frac{V_0}{V_\ell} \leq 1 + \varepsilon
\quad \Rightarrow \quad \frac{1}{1 + \varepsilon} \leq \frac{V_\ell}{V_0} \leq \frac{1}{1 - \varepsilon}.\]
Employing the lower bound \( \frac{V_{\ell_0}}{V_0} \geq \frac{1}{1+\varepsilon} \) and upper bound \( \frac{V_{\ell_0+1}}{V_0} \leq \frac{1}{1-\varepsilon} \) results in the following:
\[\sqrt{\frac{2}{1 - \varepsilon}} > \sqrt{\frac{1}{1 + \varepsilon}} 
+ \sqrt{\frac{3 V_{\ell_0+1,\ell_0}}{V_0}}.\]
Rearranging this result yields \eqref{corollaryeq}, completing the proof.
\section{Proof of Proposition \ref{prop:MLISrate}}
\label{proof2}
From the MLIS sampling complexity estimate,
\[\text{Work}^{\mathrm{MLIS}}_{\text{sampling}} 
< \frac{1}{q_w^2} \left( \frac{2C}{\mathrm{TOL}} \right)^2 
\left( \sqrt{V^{\mathrm{IS}}_{{\ell_0}} C^{\mathrm{IS}}_{{\ell_0}}} 
+ \sum_{\ell = \ell_0 + 1}^{L_{\mathrm{opt}}} 2^{\frac{\gamma - \beta}{2}\ell}\right)^2.\]
We define the following:
\[
\Sigma := \sqrt{V^{\mathrm{IS}}_{{\ell_0}} C^{\mathrm{IS}}_{{\ell_0}}} 
+ \sum_{\ell = \ell_0+ 1}^{L_{\mathrm{opt}}} 2^{\frac{\gamma - \beta}{2}\ell}.
\]
Using scaling assumptions yields
\[
\sqrt{V^{\mathrm{IS}}_{{\ell_0}}
C^{\mathrm{IS}}_{{\ell_0}}} 
= \mathcal{O}\bigl(\mathrm{TOL}^{\frac{v_0 - c_0}{2}}\bigr).\] 
We analyze the sum according to the relationship between $\gamma$ and $\beta$. 

\medskip
\noindent\textbf{Case 1: $\gamma < \beta$.}
In this constraint, $2^{\frac{\gamma-\beta}{2}\ell}$ decays with~$\ell$, so the sum is 
dominated by its first term:
\[\sum_{\ell = \ell_0 + 1}^{L_{\mathrm{opt}}} 
2^{\frac{\gamma - \beta}{2}\ell}
\approx 2^{\frac{\gamma - \beta}{2}\ell_0}
= \mathcal{O}\bigl(\mathrm{TOL}^{\frac{(\beta-\gamma)c_0}{2}}\bigr).\]
Thus,
\[
\Sigma = \mathcal{O}\bigl(\mathrm{TOL}^{\frac{v_0 - c_0}{2}}\bigr)
+ \mathcal{O}\bigl(\mathrm{TOL}^{\frac{(\beta-\gamma)c_0}{2}}\bigr),
\]
and
\[
\text{Work}^{\mathrm{MLIS}}_{\text{sampling}} 
= \mathcal{O}\bigl(\mathrm{TOL}^{-2+H}\bigr), \qquad
H = \min\bigl(v_0 - c_0,\, (\beta - \gamma)c_0\bigr).
\]
\medskip
\noindent\textbf{Case 2: $\gamma = \beta$.}
In this case, 
\(
\sum_{\ell = \ell_0 + 1}^{L_{\mathrm{opt}}} 
2^{\frac{\gamma - \beta}{2}\ell} = \mathcal{O}(L_{\mathrm{opt}} - \ell_0)
=\mathcal{O}(\log \mathrm{TOL})\).
Hence,
\[
\Sigma = \mathcal{O}\bigl(\mathrm{TOL}^{\frac{v_0 - c_0}{2}}\bigr)
+ \mathcal{O}(|\log \mathrm{TOL}|).
\]
If $v_0 - c_0 \geq 0$, the logarithmic term dominates:
\[
\text{Work}^{\mathrm{MLIS}}_{\text{sampling}}
= \mathcal{O}\bigl(\mathrm{TOL}^{-2} \log^2 \mathrm{TOL}\bigr).
\]
Otherwise, if $v_0 - c_0 < 0$, the first term dominates:
\[
\text{Work}^{\mathrm{MLIS}}_{\text{sampling}}
= \mathcal{O}\bigl(\mathrm{TOL}^{-2 + (v_0 - c_0)}\bigr).
\]

\medskip
\noindent\textbf{Case 3: $\gamma > \beta$.}
In this case, $2^{\frac{\gamma-\beta}{2}\ell}$ increases with~$\ell$, so the sum is 
dominated by its last term:
\[
\sum_{\ell = \ell_0+ 1}^{L_{\mathrm{opt}}} 
2^{\frac{\gamma - \beta}{2}\ell}
\approx 2^{\frac{\gamma - \beta}{2} L_{\mathrm{opt}}}.
\]
From the bias constraint $2^{-\alpha L_{\mathrm{opt}}} \sim \mathrm{TOL}$,
\(
2^{\frac{\gamma-\beta}{2} L_{\mathrm{opt}}}
= \mathcal{O}\bigl(\mathrm{TOL}^{-\frac{\gamma - \beta}{2 \alpha}}\bigr).
\)
Therefore,
\[
\Sigma = \mathcal{O}\bigl(\mathrm{TOL}^{\frac{v_0 - c_0}{2}}\bigr)
+ \mathcal{O}\bigl(\mathrm{TOL}^{-\frac{\gamma - \beta}{2 \alpha}}\bigr),
\]
and
\[
\text{Work}^{\mathrm{MLIS}}_{\text{sampling}}
= \mathcal{O}\bigl(\mathrm{TOL}^{-2+H}\bigr),
\qquad
H = \min\bigl(v_0 - c_0,\, \tfrac{\beta - \gamma}{\alpha}\bigr).
\]
\medskip
These three cases combined complete the proof.
\section{Proof of Proposition \ref{smg}}
\label{proof3}
By definition,
\[
\mathbb{E}\!\left[g_{w}(Z(T))\right]-\mathbb{E}\!\left[g_{w}^{c}(Z(T))\right]
=\int_{\mathbb{R}}\!\big(g_{w}(z)-g_{w}^{c}(z)\big)\,\rho_{T}(z)\,dz,
\]
and the integrand vanishes outside \((w-c,w+c)\). 
With the change of variables \(x=w+u\), \(u\in(-c,c)\),
\[
\mathbb{E}\!\left[g_{w}(Z(T))\right]-\mathbb{E}\!\left[g_{w}^{c}(Z(T))\right]
=\int_{-c}^{c}\!\Big(\mathbbm{1}_{\{u \ge 0\}}-\tfrac{1}{2}-\tfrac{u}{2c}\Big)\,\rho_{T}(w+u)\,du.
\]
Splitting at \(u=0\) and using the expansion \(\rho_{T}(w+u)=\rho_{T}(w)+u\rho_{T}'(w)+o(u)\) yields
\[
\begin{aligned}
\mathbb{E}\!\left[g_w(Z(T))\right]-\mathbb{E}\!\left[g_w^c(Z(T))\right]
&=\int_{0}^{c}\!\Big(\tfrac{1}{2}-\tfrac{u}{2c}\Big)\big(\rho_T(w)+u\rho_T'(w)\big)\,du
\\ & +\int_{-c}^{0}\!\Big(-\tfrac{1}{2}-\tfrac{u}{2c}\Big)\big(\rho
_T(w)+u\rho_T'(w)\big)\,du
+ o(c^{2}).
\end{aligned}
\]
The zeroth-order terms cancel, while the first-order terms give  
\[
\int_{0}^{c}\!\Big(\tfrac{1}{2}-\tfrac{u}{2c}\Big)u\,du
+\int_{-c}^{0}\!\Big(-\tfrac{1}{2}-\tfrac{u}{2c}\Big)u\,du
= \frac{c^{2}}{6}.
\]
Thus, \(\mathbb{E}\!\left[g_w(Z(T))\right]-\mathbb{E}\!\left[g_w^c(Z(T))\right] = \tfrac{c^{2}}{6}\rho_T'(w) + o(c^{2})\).
\section{Proof of Proposition \ref{smf}}
\label{proof4}
Using the standard weak error representation (see, e.g.,~\cite{szepessy2001adaptive}),
\begin{equation}
\mathbb{E}\big[g_{w}^{c}(Z(T))\big]
 - \mathbb{E}\big[g_{w}^{c}(Z^{d}(T))\big]
= \int_{0}^{T}
   \mathbb{E}\!\left[(f(Z(s)) - f^{d}(Z(s)))\,
   \partial_{z}u^{c}(s,\boldsymbol{X}(s),Z(s))\right] ds.
\end{equation}
Fix \(s\in(0,T)\), and denote 
\begin{equation}
Y(s)
:= (f(Z(s))-f^{d}(Z(s)))\,\partial_{z}u^{c}(s,\boldsymbol{X}(s),Z(s)).
\end{equation}
Then, by the law of total expectation,
\begin{equation}
\mathbb{E}[Y(s)]
=
\mathbb{E}\!\left[
\mathbb{E}\!\left[
\bigl(f(Z(s)) - f^{d}(Z(s))\bigr)\,
\partial_{z} u^{c}\bigl(s,\boldsymbol{X}(s),Z(s)\bigr)
\,\Big|\, Z(s)
\right]
\right].
\end{equation}
Therefore,
\begin{equation}
\mathbb{E}[Y(s)]
= \int_{\mathbb{R}}
    (f(z)-f^{d}(z))\,h^c_{s}(z)\,\rho_{s}(z)\,dz.
\end{equation}
where the deterministic function \(h^c_{s}\colon \mathbb{R}\to\mathbb{R}\) is defined by
\begin{equation}
h^c_{s}(z)
:= \mathbb{E}\!\left[\partial_{z}u^{c}(s,\boldsymbol{X}(s),Z(s))\mid Z(s)=z\right].
\end{equation}
Let $H^c_{s}(z):=h^c_{s}(z)\rho_{s}(z)$. Since the density $\rho_s$ is supported in $(0,s)$, we may insert
an indicator function:
\begin{equation}
H^c_s(z) = H^c_s(z)\,\mathbf{1}_{\{0<z<s\}}.
\end{equation}
\(f - f^{d}\) is supported in \((\gamma-d,\gamma+d)\), and hence
\begin{equation}
\mathbb{E}[Y(s)]
= \int_{\gamma-d}^{\gamma+d}
    (f(z)-f^{d}(z))\,H^c_{s}(z)\,\mathbf{1}_{\{z<s\}}\,dz .
\end{equation}
Integrating over $s$ gives
\begin{equation}
\Delta_c^d
= \int_0^T \mathbb{E}[Y(s)]\,ds
= \int_0^T \int_{\gamma-d}^{\gamma+d}
    (f(z)-f^{d}(z))\,H^c_{s}(z)\,\mathbf{1}_{\{z<s\}}\,dz\,ds .
\end{equation}
Suppose that the following assumption holds:\\
\begin{assum}
\label{ass1}
\begin{equation}
\int_z^T |H^{c}_{s}(z)| < \infty
\qquad\text{for all } z\in(\gamma-d,\gamma+d).
\end{equation}
\end{assum}
By Fubini’s theorem, we may interchange the order of integration:
\begin{equation}
\Delta_c^d
= \int_{\gamma-d}^{\gamma+d} (f(z)-f^d(z))
   \left( \int_0^T H^c_s(z)\,\mathbf{1}_{\{z<s\}}\,ds \right) dz
= \int_{\gamma-d}^{\gamma+d} (f(z)-f^d(z))\,F(z)\,dz ,
\end{equation}
where
\begin{equation}
F(z) := \int_0^T H^c_s(z)\,\mathbf{1}_{\{z<s\}}\,ds
      = \int_z^T H^c_s(z)\,ds,
      \qquad z\in(\gamma-d,\gamma+d).
\end{equation}
Suppose that the following assumptions hold:
\begin{assum}
\label{ass2} For each \(s \in [0,T]\), the map \(z \mapsto H_s(z)\) is \(C^{1}\big([\gamma- d,\;\gamma + d]\big)\).
\end{assum}
\begin{assum}
\label{ass3} There exists an integrable function \(\hat{H}:[0,T] \to \mathbb{R}_+\) such that
\begin{equation}
\left| \frac{\partial}{\partial z} H_s(z) \right| \le \hat{H}(s)
\quad \text{for all } z \in [\gamma- d,\; \gamma + d].
\end{equation}
\end{assum}
Using Leibniz, $F$ is $C^1$ in a
neighborhood of $\gamma$ and 
\begin{equation}
F'(\gamma)
= -\,H^c_{\gamma}(\gamma)
  + \int_{\gamma}^{T} \partial_z H^c_s(\gamma)\,ds .
\end{equation}
A Taylor expansion yields
\begin{equation}
F(z) = F(\gamma) + F'(\gamma)(z-\gamma) + o(z),
\end{equation}
Thus,
\begin{equation}
\Delta_c^d
= \int_{\gamma-d}^{\gamma+d} (f(z)-f^d(z))
   \big(F(\gamma) + F'(\gamma)(z-\gamma) + o(z)\big)\,dz .
\end{equation}
Let $u=z-\gamma$ and define
\begin{equation}
\psi(u) := f(\gamma+u)-f^d(\gamma+u), \qquad u\in[-d,d].
\end{equation}
Then
\begin{equation}
\Delta_c^d
= \int_{-d}^d \psi(u)
   \Big(F(\gamma)+F'(\gamma)u+o(\gamma+u)\Big)\,du.
\end{equation}
By construction of $f^d$, one has
\begin{equation}
\int_{-d}^d \psi(u)\,du = 0,
\qquad
\int_{-d}^d u\,\psi(u)\,du =-\frac{d^2}{6},
\end{equation}

\begin{equation}
\Delta_c^d
= F(\gamma)\!\int_{-d}^d \psi(u)\,du
  + F'(\gamma)\!\int_{-d}^d u\,\psi(u)\,du
  + o(d^2).
\end{equation}
The first term vanishes. Therefore:

\begin{equation}
\Delta_c^d = -F'(\gamma)\,\frac{d^2}{6}+ o(d^2)
\end{equation}
which proves the claim.
\bibliographystyle{unsrt}
\bibliography{References.bib}

@article{amar2025stochastic,
  title={Stochastic differential equations for performance analysis of wireless communication systems},
  author={Amar, Eya Ben and Rached, Nadhir Ben and Tempone, Ra{\'u}l and Alouini, Mohamed-Slim},
  journal={IEEE Transactions on Wireless Communications},
  year={2025},
  publisher={IEEE}
}

@article{chesney1997brownian,
  title={Brownian excursions and Parisian barrier options},
  author={Chesney, Marc and Jeanblanc-Picqu{\'e}, Monique and Yor, Marc},
  journal={Mathematical Finance},
  volume={7},
  number={4},
  pages={327--346},
  year={1997},
  publisher={Wiley Online Library}
}

@article{szepessy2001adaptive,
  title={Adaptive weak approximation of stochastic differential equations},
  author={Szepessy, Anders and Tempone, Ra{\'u}l and Zouraris, Georgios E},
  journal={Communications on Pure and Applied Mathematics: A Journal Issued by the Courant Institute of Mathematical Sciences},
  volume={54},
  number={10},
  pages={1169--1214},
  year={2001},
  publisher={Wiley Online Library}
}

@article{choi2021occupation,
  title={Occupation Times for Time-changed Processes with Applications to Parisian Options},
  author={Choi, Jaehyuk and Clancy Jr, Daniel},
  journal={arXiv preprint arXiv:2110.07639},
  year={2021}
}

@article{sericola2000occupation,
  title={Occupation times in Markov processes},
  author={Sericola, Bruno},
  journal={Stochastic Models},
  volume={16},
  number={5},
  pages={479--510},
  year={2000},
  publisher={Taylor \& Francis}
}

@article{barkai2006residence,
  title={Residence time statistics for normal and fractional diffusion in a force field},
  author={Barkai, Eli},
  journal={Journal of statistical physics},
  volume={123},
  pages={883--907},
  year={2006},
  publisher={Springer}
}

@article{hammouda2024automated,
  title={Automated importance sampling via optimal control for stochastic reaction networks: A Markovian projection--based approach},
  author={Hammouda, Chiheb Ben and Rached, Nadhir Ben and Tempone, Ra{\'u}l and Wiechert, Sophia},
  journal={Journal of Computational and Applied Mathematics},
  volume={446},
  pages={115853},
  year={2024},
  publisher={Elsevier}
}

@book{kloeden1992stochastic,
  title={Stochastic differential equations},
  author={Kloeden, Peter E and Platen, Eckhard and Kloeden, Peter E and Platen, Eckhard},
  year={1992},
  publisher={Springer}
}

@article{gyongy1986mimicking,
  title={Mimicking the one-dimensional marginal distributions of processes having an It{\^o} differential},
  author={Gy{\"o}ngy, Istv{\'a}n},
  journal={Probability theory and related fields},
  volume={71},
  number={4},
  pages={501--516},
  year={1986},
  publisher={Springer}
}

@book{asmussen2007stochastic,
  title={Stochastic simulation: algorithms and analysis},
  author={Asmussen, S{\o}ren and Glynn, Peter W},
  year={2007},
  publisher={Springer Science \& Business Media}
}

@article{feng2007stochastic,
  title={Stochastic differential equation theory applied to wireless channels},
  author={Feng, Tao and Field, Timothy R and Haykin, Simon},
  journal={IEEE Transactions on Communications},
  volume={55},
  number={8},
  pages={1478--1483},
  year={2007},
  publisher={IEEE}
}

@inproceedings{charalambous1999stochastic,
  title={Stochastic models for short-term multipath fading channels: chi-square and Ornstein-Uhlenbeck processes},
  author={Charalambous, Charalambos D and Menemenlis, Nickie},
  booktitle={Proceedings of the 38th IEEE Conference on Decision and Control (Cat. No. 99CH36304)},
  volume={5},
  pages={4959--4964},
  year={1999},
  organization={IEEE}
}

@article{giles2015multilevel,
  title={Multilevel monte carlo methods},
  author={Giles, Michael B},
  journal={Acta numerica},
  volume={24},
  pages={259--328},
  year={2015},
  publisher={Cambridge University Press}
}

@book{kroese2013handbook,
	title = "Handbook of {M}onte {C}arlo methods",
	author = {D. P. Kroese and T. Taimre and Z.I.  Botev},
	publisher = "Wiley",
	address="N.J",
	year = 2011,
}

@article{rached2026importance,
  title={Importance sampling for rare event tracking within the ensemble Kalman filtering framework},
  author={Rached, Nadhir Ben and Schwerin, Erik von and Shaimerdenova, Gaukhar and Tempone, Raul},
  journal={Statistics and computing},
  volume={36},
  number={1},
  pages={2},
  year={2026},
  publisher={Springer}
}

@incollection{biondini2015introduction,
  title={An introduction to rare event simulation and importance sampling},
  author={Biondini, Gino},
  booktitle={Handbook of statistics},
  volume={33},
  pages={29--68},
  year={2015},
  publisher={Elsevier}
}

@article{ben2022single,
  title={Single level importance sampling for McKean--Vlasov stochastic differential equations},
  author={Ben Rached, N and Haji-Ali, AL and Pillai, SMS and Tempone, R},
  journal={arXiv preprint arXiv:2207.06926},
  year={2022}
}

@article{ben2023learning,
  title={Learning-based importance sampling via stochastic optimal control for stochastic reaction networks},
  author={Ben Hammouda, Chiheb and Ben Rached, Nadhir and Tempone, Ra{\'u}l and Wiechert, Sophia},
  journal={Statistics and Computing},
  volume={33},
  number={3},
  pages={58},
  year={2023},
  publisher={Springer}
}

@article{awad2013zero,
  title={Zero-variance importance sampling estimators for Markov process expectations},
  author={Awad, Hernan P and Glynn, Peter W and Rubinstein, Reuven Y},
  journal={Mathematics of Operations Research},
  volume={38},
  number={2},
  pages={358--388},
  year={2013},
  publisher={INFORMS}
}

@article{ben2025multilevel,
  title={Multilevel importance sampling for rare events associated with the McKean--Vlasov equation},
  author={Ben Rached, Nadhir and Haji-Ali, Abdul-Lateef and Subbiah Pillai, Shyam Mohan and Tempone, Ra{\'u}l},
  journal={Statistics and Computing},
  volume={35},
  number={1},
  pages={1},
  year={2025},
  publisher={Springer}
}

@book{oksendal2003stochastic,
  title={Stochastic differential equations},
  author={{\O}ksendal, Bernt and {\O}ksendal, Bernt},
  year={2003},
  publisher={Springer}
}

@article{hartmann2018importance,
  title={Importance sampling in path space for diffusion processes with slow-fast variables},
  author={Hartmann, Carsten and Sch{\"u}tte, Christof and Weber, Marcus and Zhang, Wei},
  journal={Probability Theory and Related Fields},
  volume={170},
  number={1},
  pages={177--228},
  year={2018},
  publisher={Springer}
}

@article{zhang2014applications,
  title={Applications of the cross-entropy method to importance sampling and optimal control of diffusions},
  author={Zhang, Wei and Wang, Han and Hartmann, Carsten and Weber, Marcus and Schütte, Christof},
  journal={SIAM Journal on Scientific Computing},
  volume={36},
  number={6},
  pages={A2654--A2672},
  year={2014},
  publisher={SIAM}
}

@article{ben2024double,
  title={Double-loop importance sampling for McKean--Vlasov stochastic differential equation},
  author={Ben Rached, Nadhir and Haji-Ali, Abdul-Lateef and Subbiah Pillai, Shyam Mohan and Tempone, Ra{\'u}l},
  journal={Statistics and Computing},
  volume={34},
  number={6},
  pages={197},
  year={2024},
  publisher={Springer}
}

@article{ben2020importance,
  title={Importance sampling for a robust and efficient multilevel Monte Carlo estimator for stochastic reaction networks},
  author={Ben Hammouda, Chiheb and Ben Rached, Nadhir and Tempone, Ra{\'u}l},
  journal={Statistics and Computing},
  volume={30},
  number={6},
  pages={1665--1689},
  year={2020},
  publisher={Springer}
}

@article{ben2023adaptive,
  title={Adaptive importance sampling for multilevel Monte Carlo Euler method},
  author={Ben Alaya, Mohamed and Hajji, Kaouther and Kebaier, Ahmed},
  journal={Stochastics},
  volume={95},
  number={2},
  pages={303--327},
  year={2023},
  publisher={Taylor \& Francis}
}

@article{kebaier2018coupling,
  title={Coupling importance sampling and multilevel Monte Carlo using sample average approximation},
  author={Kebaier, Ahmed and Lelong, J{\'e}r{\^o}me},
  journal={Methodology and Computing in Applied Probability},
  volume={20},
  number={2},
  pages={611--641},
  year={2018},
  publisher={Springer}
}

@article{dodwell2015hierarchical,
  title={A hierarchical multilevel Markov chain Monte Carlo algorithm with applications to uncertainty quantification in subsurface flow},
  author={Dodwell, Tim J and Ketelsen, Christian and Scheichl, Robert and Teckentrup, Aretha L},
  journal={SIAM/ASA Journal on Uncertainty Quantification},
  volume={3},
  number={1},
  pages={1075--1108},
  year={2015},
  publisher={SIAM}
}

@article{lykkegaard2023multilevel,
  title={Multilevel delayed acceptance MCMC},
  author={Lykkegaard, Mikkel Bue and Dodwell, Tim J and Fox, Colin and Mingas, Grigorios and Scheichl, Robert},
  journal={SIAM/ASA journal on uncertainty quantification},
  volume={11},
  number={1},
  pages={1--30},
  year={2023},
  publisher={SIAM}
}

@article{bossy1997stochastic,
  title={A stochastic particle method for the McKean-Vlasov and the Burgers equation},
  author={Bossy, Mireille and Talay, Denis},
  journal={Mathematics of computation},
  volume={66},
  number={217},
  pages={157--192},
  year={1997}
}

@book{langtangen2017finite,
  title={Finite difference computing with PDEs: a modern software approach},
  author={Langtangen, Hans Petter and Linge, Svein},
  year={2017},
  publisher={Springer Nature}
}

@book{leveque2007finite,
  title={Finite difference methods for ordinary and partial differential equations: steady-state and time-dependent problems},
  author={LeVeque, Randall J},
  year={2007},
  publisher={SIAM}
}

@article{ben2023state,
  title={State-dependent importance sampling for estimating expectations of functionals of sums of independent random variables},
  author={Ben Amar, Eya and Ben Rached, Nadhir and Haji-Ali, Abdul-Lateef and Tempone, Ra{\'u}l},
  journal={Statistics and Computing},
  volume={33},
  number={2},
  pages={40},
  year={2023},
  publisher={Springer}
}
\end{document}